\numberwithin{equation}{section} 
\numberwithin{table}{section} 
\numberwithin{figure}{section} 
\newdimen\oldparindent
\def\thm@space@setup{ \thm@preskip=\parskip \thm@postskip=0pt }
\setlist[enumerate, 1]{label={(\roman*)}} 
\theoremstyle{plain} 
\newtheorem{theorem}{Theorem}[section] 
\newtheorem{lemma}{Lemma}[section]
\newtheorem{proposition}{Proposition}[section]
\newtheorem{condition}{Condition}
\theoremstyle{definition} 
\newtheorem{remark}{Remark}[section]
\newtheorem*{remark*}{Remark}
\newtheorem*{definition*}{Definition}
\newtheorem*{theorem*}{Theorem} 
\newtheorem*{corollary*}{Corollary}
\newtheorem*{lemma*}{Lemma}
\newtheorem*{proposition*}{Proposition}
\newtheorem*{note*}{Note}
\newtheorem*{example*}{Example}
\newtheorem*{exercise*}{Exercise}
\newcommand{\dd}{\;\mathrm{d}} 
\newcommand{\Prob}[2][]{\mathbb{P}_{#1}\left(#2\right)} 
\newcommand{\E}{\mathbb{E}} 
\newcommand{\ev}[2][]{\mathbb{E}_{#1}\left[#2\right]} 
\newcommand{\cev}[3][]{\mathbb{E}_{#1}\left[#2 \mid #3 \right]} 
\newcommand{\indicator}[1]{\mathbbm{1}_{#1}} 
\newcommand{\reals}{\mathbb{R}} 
\newcommand{\naturals}{\mathbb{N}} 
\newcommand{\norm}[1]{\lVert #1 \rVert} 
\DeclareMathOperator{\dtv}{d_{TV}}
\DeclareMathOperator*{\argmin}{arg\,min}
\let\epsilon\varepsilon
\let\phi\varphi
\title[On approximating the Potts model]{On approximating the Potts model with contracting Glauber dynamics}
\author{Roxanne He}
\address{School of Mathematics and Statistics, The University of Melbourne, Parkville, VIC, 3010, Australia}
\email{roxanne\_he@outlook.com}
\author{Jackie Lok}
\address{Department of Operations Research and Financial Engineering, Princeton University, Princeton, NJ 08544, USA}
\email{jackie.lok@princeton.edu}
\date{25 September 2025}
\begin{document}

\begin{abstract}
We show that the Potts model on a graph can be approximated by a sequence of independent and identically distributed spins in terms of Wasserstein distance at high temperatures. We prove a similar result for the Curie--Weiss--Potts model on the complete graph, conditioned on being close enough to any of its equilibrium macrostates, in the low-temperature regime. Our proof technique is based on Stein's method for comparing the stationary distributions of two Glauber dynamics with similar updates, one of which is rapid mixing and contracting on a subset of the state space. Along the way, we prove a new upper bound on the mixing time of the Glauber dynamics for the conditional measure of the Curie--Weiss--Potts model near an equilibrium macrostate.
\end{abstract}

\keywords{Stein's method, distributional approximation, Potts model, Curie-Weiss, Glauber dynamics, mixing time}

\subjclass[2020]{60J10, 60K35, 68Q87}

\maketitle


\section{Introduction} \label{sec:intro}

The Potts model is a spin system that generalises the classical Ising model of magnetism, and has been extensively studied in many fields, including statistical physics~\cite{wu1982potts} and probability theory~\cite{fortuin1972random, swendsen1987nonuniversal, edwards1988jointpotts, grimmett2006random}. The Potts model and its extensions have also found applications in areas such as simulating biological cells~\cite{graner1992simulation}, predicting protein structure~\cite{sercu2021neural}, image reconstruction~\cite{geman1984image}, and community detection in complex networks~\cite{reichardt2004community}.

For a graph $G = (V, E)$ on $N$ vertices, a \emph{configuration} or \emph{colouring} $\sigma \in [q]^V$ is a function which assigns to each vertex $v \in V$ a \emph{spin} or \emph{colour} $\sigma(v) \in [q] := \{ 1, \ldots, q \}$. Under the ferromagnetic Potts model, the probability of each configuration is given by the \emph{Gibbs measure} $\mu$ with
\[
    \mu(\sigma) = \frac{e^{-\beta H(\sigma)}}{Z(\beta)},
\]
where $\beta \geq 0$ is an \emph{inverse temperature parameter},
\[
    H(\sigma) = -\frac{1}{N} \sum_{u, v \in V} \indicator{\sigma(u) = \sigma(v)}
\]
is the Hamiltonian with \emph{interaction strength} $N^{-1}$, and $Z(\beta) = \sum_{\sigma} e^{-\beta H(\sigma)}$ is the \emph{partition function} (i.e.\ normalisation factor), which is difficult to compute. Thus, configurations with more monochromatic edges (i.e.\ whose endpoints have the same colour) are more likely. We will assume that $q \geq 3$ (note that $q = 2$ corresponds to the Ising model). 

A high-level heuristic for the Potts model is that the spins should be approximately independent if the temperature is high (i.e.\ $\beta$ is small). 
In this paper, we prove that the Potts model on a general bounded-degree graph is close to a random configuration with independent and uniformly distributed spins in terms of Wasserstein distance when $\beta$ is small enough (Theorem~\ref{thm:potts_approx_result}).
We prove a similar approximation result for the Curie--Weiss--Potts model on the complete graph for a wider range of $\beta$ in a high-temperature regime where the corresponding Glauber dynamics is known to mix rapidly (Theorem~\ref{WDbound_high}).
Furthermore, we show that in the complementary low-temperature regime, the Curie--Weiss--Potts model, conditioned on being close to any of its equilibrium macrostates, can be approximated by a sequence of i.i.d.\ spins (Theorem~\ref{WDbound}).
Along the way, we prove a new upper bound on the mixing time of the Glauber dynamics for the conditional measure of the Curie--Weiss--Potts model near an equilibrium macrostate (Theorem~\ref{mixingtime}).

Our main tool is the use of Stein's method to reduce the problem of comparing two distributions to the problem of comparing the dynamics of two Markov chains that these distributions are stationary for. This idea was introduced by the concurrent works~\cite{reinert2019approximating} and~\cite{bresler2019stein}, where it was used to approximate exponential random graphs (with Erd\H{o}s-R\'{e}nyi random graphs), and the Ising model on $d$-regular expander graphs (with the Curie-Weiss model) respectively. This technique was further applied in~\cite{blanca2022mixing} to establish spectral independence of spin systems.

To give a more concrete statement, the following approximation result was obtained in~\cite{blanca2022mixing}, generalising the ideas introduced in~\cite{reinert2019approximating, bresler2019stein}.
We say that a Markov chain on a metric space $(\Sigma, d)$ is \emph{contracting} if for all $\sigma, \tau \in \Sigma$, there exists a coupling $(X^\sigma_1, X^\tau_1)$ of the one-step distributions of the chain, starting from $\sigma$ and $\tau$, such that for some $0 \leq \kappa < 1$,
\begin{equation} \label{eq:defn_contracting}
    \ev{d(X^\sigma_1, X^\tau_1)} \leq \kappa \cdot d(\sigma, \tau).
\end{equation}

\begin{theorem}[{\cite[Lemma 4.3]{blanca2022mixing}}] \label{thm:approx_thm_general_contracting}
Let $P$ and $Q$ be Markov chains on a finite metric space $(\Sigma, d)$ with stationary distributions $\mu$ and $\nu$ respectively. Denote the one-step distributions of $P$ and $Q$, starting from $\sigma \in \Sigma$, by $P(\sigma, \cdot)$ and $Q(\sigma, \cdot)$. Let $X \sim \mu$ and $Y \sim \nu$ be random vectors. If $P$ is contracting according to~\eqref{eq:defn_contracting} for some $0 \leq \kappa < 1$, then for any function $h: \Sigma \to \mathbb{R}$,
\[
    |\mathbb{E} h(X) - \mathbb{E} h(Y)| \leq \frac{L_d(h)}{1 - \kappa} \ev{d_W(P(Y, \cdot), Q(Y, \cdot))} ,
\]
where $d_W$ is the Wasserstein distance between measures on $\Sigma$ with respect to $d$, and $L_d(h)$ is the optimal Lipschitz constant of $h$ such that $|h(\sigma) - h(\tau)| \leq L_d(h) \cdot d(\sigma, \tau)$ for all $\sigma, \tau \in \Sigma$.
\end{theorem}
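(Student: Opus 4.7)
The plan is to carry out a standard Stein's method argument built around the Poisson equation for the contracting chain $P$. First, I would let $f \colon \Sigma \to \reals$ solve
\[
    f(\sigma) - (Pf)(\sigma) = h(\sigma) - \ev{h(X)},
\]
given explicitly by $f(\sigma) = \sum_{t=0}^{\infty} \bigl( P^t h(\sigma) - \ev{h(X)} \bigr)$, where $P^t h(\sigma) = \ev[\sigma]{h(X^P_t)}$ denotes the expectation of $h$ under the chain $P$ run for $t$ steps from $\sigma$. Absolute convergence of this series will follow from the Lipschitz estimate established next, together with finiteness of $\Sigma$ and the fact that $\mu$ is stationary for $P$.

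The key estimate is $L_d(f) \leq L_d(h)/(1-\kappa)$. By iterating the one-step coupling in~\eqref{eq:defn_contracting} via the Markov property, for every $t \geq 0$ and every $\sigma, \tau \in \Sigma$ there is a coupling $(X^\sigma_t, X^\tau_t)$ with $\ev{d(X^\sigma_t, X^\tau_t)} \leq \kappa^t d(\sigma, \tau)$. Hence
\[
    |P^t h(\sigma) - P^t h(\tau)| \leq L_d(h)\,\ev{d(X^\sigma_t, X^\tau_t)} \leq L_d(h)\,\kappa^t\, d(\sigma, \tau),
\]
and summing the geometric series over $t \geq 0$ yields both convergence of the series defining $f$ and the claimed Lipschitz bound.

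Now, since $Y \sim \nu$ and $\nu$ is stationary for $Q$, we have $\ev{Qf(Y)} = \ev{f(Y)}$, so evaluating the Poisson equation at $Y$ and taking expectations gives
\[
    \ev{h(Y)} - \ev{h(X)} = \ev{f(Y) - Pf(Y)} = \ev{Qf(Y) - Pf(Y)}.
\]
Conditioning on $Y$ and applying Kantorovich--Rubinstein duality to the $L_d(f)$-Lipschitz function $f$ yields, pointwise in $\sigma$,
\[
    |Qf(\sigma) - Pf(\sigma)| \leq L_d(f) \cdot d_W(P(\sigma,\cdot), Q(\sigma,\cdot)),
\]
and the stated bound follows by taking expectations over $Y \sim \nu$ and substituting the Lipschitz bound for $f$.

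The only real obstacle is the iterated-coupling step producing a genuine joint distribution of $(X^\sigma_t, X^\tau_t)$ at each $t$ with $\ev{d(X^\sigma_t, X^\tau_t)} \leq \kappa^t d(\sigma, \tau)$; this is routine in the finite setting by composing optimal one-step couplings along paths of the chain using the Markov property, but it is the one place where care is required. Everything else---the Poisson equation bookkeeping, the geometric sum, and the duality step---is essentially automatic once the Lipschitz control on $f$ is in hand.
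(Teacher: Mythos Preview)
Your argument is correct and is essentially the same as the paper's: both solve the Poisson/Stein equation $(I-P)f = h - \E h(X)$, bound $L_d(f) \leq L_d(h)/(1-\kappa)$ by iterating the one-step contraction and summing the geometric series, and then use stationarity of $\nu$ under $Q$ together with Kantorovich--Rubinstein duality to obtain the final estimate. The only cosmetic difference is that the paper passes through the continuous-time generator $\mathcal{A}_\mu = P - I$ and the integral representation~\eqref{eq:stein_eqn_soln} before reducing to the discrete skeleton, whereas you work directly with the discrete-time Poisson series $f = \sum_{t \geq 0}(P^t h - \E h(X))$; the two solutions differ only by a sign and the bookkeeping is identical.
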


\begin{remark} \label{rmk:wasserstein_interp}
It is well known that the Wasserstein distance between two measures $\mu$ and $\nu$ on a finite metric space $(\Sigma, d)$ is given by the following two equivalent dual representations~\cite{villani2003optimal}:
\begin{equation} \label{eq:wasserstein_defn}
    d_W(\mu, \nu) = \sup_{h} \{ \E h(X) - \E h(Y) : X \sim \mu, Y \sim \nu \} = \inf_{(X, Y)} \ev{d(X, Y)}.
\end{equation}
Here, the supremum is over all $1$-Lipschitz functions $h: \Sigma \to \reals$ with respect to $d$, and the infimum is over all couplings $(X, Y)$ of $\mu$ and $\nu$ (i.e.\ joint distributions on $\Sigma \times \Sigma$ such that $X \sim \mu$ and $Y \sim \nu$), which is attained by an optimal coupling.
Therefore, by considering the class of $1$-Lipschitz functions $h$, the approximation result in Theorem~\ref{thm:approx_thm_general_contracting} also implies the following bound on the Wasserstein distance:
\begin{equation}
    d_W(\mu, \nu) \leq \frac{1}{1 - \kappa} \ev{d_W(P(Y, \cdot), Q(Y, \cdot))}.
\end{equation}
Assuming that $P$ is contracting, this provides a precise statement of the heuristic that if the two chains $P$ and $Q$ have similar updates (i.e.\ $\ev{d_W(P(Y, \cdot), Q(Y, \cdot))}$ is small), then their stationary distributions $\mu$ and $\nu$ are also close (in terms of Wasserstein distance).
\end{remark}

In our setting, we will compare the distributions of two spin systems $\mu$ and $\nu$ on the state space $\Sigma = [q]^V$ equipped with the Hamming distance $d_H(\sigma, \tau) = \sum_{v \in V} \indicator{\sigma(v) \ne \tau(v)}$, which counts the number of vertices with different colours.
We will choose $P$ to be the \emph{Glauber dynamics} for $\mu$, which is a discrete-time Markov chain $(\sigma_t)_{t \geq 0}$ with $\mu$ as its stationary distribution, and the following transitions: given the current configuration $\sigma_t$, a vertex $v \in V$ is chosen uniformly at random, and a new configuration $\sigma_{t+1}$ is generated by recolouring $v$ with a new colour $k \in [q]$ drawn according to $\mu$, conditional on the colours of all the other vertices being fixed. That is, $\sigma_{t+1}(u) = \sigma_t(u)$ for all $u \ne v$, and $\sigma_{t+1}(v) = k$ with probability
\begin{equation} \label{eq:cond_spin_dist}
    \mu_v(k \mid \sigma_t) := \mu(\sigma(v) = k \mid \sigma(w) = \sigma_t(w) \;\; \forall w \neq v), \quad k \in [q] .
\end{equation}
We say that $\mu_v(\cdot \mid \sigma)$ is the \emph{conditional spin distribution} of $\mu$ at $v$, given $\sigma$.
Similarly, we will choose $Q$ to be the Glauber dynamics for $\nu$.
We will take $\mu$ to be the more complicated model of interest (i.e.\ the Potts model) and $\nu$ to be a simpler model (i.e.\ with i.i.d.\ spins), chosen in a specific way such that the transition probabilities of $P$ and $Q$ are ``matched'' using a mean-field approximation (see~\eqref{eq:potts_meanfield} later).

However, a technical challenge often encountered in practice is that the Glauber dynamics $P$ is only \emph{contracting on a subset of the state space}; see, e.g., the exponential random graph model analysed in~\cite{reinert2019approximating}, and our upcoming discussion of the Curie--Weiss--Potts model. 
We demonstrate that this problem can be overcome to deduce similar approximation results as Theorem~\ref{thm:approx_thm_general_contracting}, provided that the following high-level conditions can be shown to hold:
\begin{enumerate}[label=(\arabic*)]
    \item The chain $P$ is \emph{rapid mixing}: that is, its \emph{mixing time}
    \begin{equation} \label{eq:defn_mixingtime}
        t_{\mathrm{mix}}(\epsilon) := \inf \left\{ t \geq 0: \max_{\sigma \in [q]^V} \norm{P^t(\sigma, \cdot) - \mu}_{\mathrm{TV}} \leq \epsilon \right\},
    \end{equation}
    which measures the time required for the total variation distance between the $t$-step distribution of the chain (in the worst case over all initial states $\sigma$), denoted by $P^t(\sigma, \cdot)$, and its stationary distribution $\mu$ to fall below a given threshold $\epsilon < 1/2$, 
    can be upper bounded by a polynomial in $N$, the number of vertices.
    
    \item The chain $P$ is contracting in some subset $\widetilde{\Sigma} \subseteq \Sigma$ of the state space. Furthermore, starting in another subset $\widetilde{\Sigma}_0 \subseteq \widetilde{\Sigma}$, the chain $P$ remains in $\widetilde{\Sigma}$ for a sufficiently long period relative to its mixing time \emph{with high probability} (i.e.\ the event does not occur with probability exponentially small in $N$).
    
    \item The random vector $Y \in \widetilde{\Sigma}_0$ with high probability.
\end{enumerate}
These ideas are essentially embedded in the proof of~\cite[Theorem~1.13]{reinert2019approximating}.
Our main contribution is in making this strategy explicit, and using it to prove our approximation results for the Curie--Weiss--Potts model in Section~\ref{CWP}, for which the Glauber dynamics is not contracting when $\beta$ is large.
In particular, Theorems~\ref{WDbound} and~\ref{mixingtime} are the most technically demanding results proved in the low-temperature regime \emph{where the Glauber dynamics is not rapidly mixing and there are multiple equilibrium macrostates}.
As a byproduct, we obtain a new upper bound on the mixing time of the Glauber dynamics for the Curie--Weiss--Potts model, conditioned on being close to any of its equilibrium macrostates.
This may be of independent interest, since the rapid mixing of Glauber dynamics for conditional distributions is less well-understood.

Before stating our results, we define some notation that will be used. For any function $h: [q]^V \to \reals$, let
\begin{equation} \label{eq:componentwise_lipschitz}
    L_v(h) := \sup_{\sigma(u) = \tau(u) \,\forall u \ne v} |h(\sigma) - h(\tau)|
\end{equation}
be the Lipschitz constant of $h$ in the component corresponding to $v \in V$ with respect to $d_H$, where the supremum is taken over all pairs $\sigma, \tau \in [q]^V$ that only possibly differ at $v$, and denote the associated vector by $L(h) := (L_v(h))_{v \in V} \in \reals^V$.
Thus, the usual vector $\ell_\infty$ norm $\norm{L(h)}_\infty$ denotes the maximum value of $L_v(h)$ for any $v \in V$.
Observe that $\norm{L(h)}_\infty$ is equal to the optimal Lipschitz constant of $h$ with respect to the Hamming distance $d_H$.

We also use standard asymptotic notation as $N \to \infty$ (treating $q \geq 3$ and $\beta > 0$ as constants): we write $f(N) = O(g(N))$ if there exists an absolute constant $C > 0$ such that $|f(N)| \leq C |g(N)|$ for sufficiently large $N$, and $f(N) = o(1)$ if $|f(N)| \to 0$.

\subsection{Potts model on bounded-degree graphs}

As a more straightforward example of the kinds of results that we are aiming for, we first state an approximation result for the Potts model on a general graph that can be obtained from Theorem~\ref{thm:approx_thm_general_contracting}. We show that if $\beta$ is small enough such that the Glauber dynamics is contracting (and hence rapid mixing), then the Potts model is close to a random configuration with independent, uniformly distributed spins.

\begin{theorem} \label{thm:potts_approx_result}
Let $G = (V, E)$ be a graph on $N$ vertices with maximum degree $\Delta$ and $|E|$ edges. Let $X \in [q]^V$ be distributed according to the Potts model on $G$ with inverse temperature $\beta$, and $Y \in [q]^V$ be a random configuration where the colour of each vertex is sampled independently and uniformly at random. If $\Delta \tanh(\beta / N) < 1$, then for any function $h: [q]^V \to \reals$,
\[
    | \mathbb{E} h(X) - \mathbb{E} h(Y) |
    \leq \norm{L(h)}_\infty \frac{\beta \sqrt{q-1}}{1 - \Delta \tanh(\beta / N)} \sqrt{\frac{2|E|}{N}}.
\]
\end{theorem}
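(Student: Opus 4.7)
The plan is to apply Theorem~\ref{thm:approx_thm_general_contracting} on $(\Sigma, d_H) = ([q]^V, d_H)$ with $P$ the Glauber dynamics for the Potts model $\mu$ and $Q$ the Glauber dynamics for the i.i.d.\ uniform product measure $\nu$ (the law of $Y$). Under $\nu$ the conditional spin distribution at any vertex is uniform on $[q]$, so $Q(\sigma, \cdot)$ simply picks a vertex uniformly at random and recolours it uniformly from $[q]$. Two tasks remain: (i) show that $P$ is contracting under the assumption $\Delta \tanh(\beta/N) < 1$, and (ii) bound $\ev{d_W(P(Y, \cdot), Q(Y, \cdot))}$ for $Y \sim \nu$.

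For (i) I would run a standard path coupling argument along the edges of the Hamming graph. Take $\sigma, \tau \in [q]^V$ differing only at a single vertex $u$ and couple the two chains to select the same update vertex $v$. If $v = u$, the chains can be coupled to agree after the update ($d_H$ drops to $0$); if $v$ is not a neighbour of $u$, then $\mu_v(\cdot \mid \sigma) = \mu_v(\cdot \mid \tau)$ and the distance stays at $1$; only when $v$ is a neighbour of $u$ do the conditional distributions differ, and a direct computation with the softmax $\mu_v(k \mid \sigma) \propto \exp((\beta/N) S_k(v, \sigma))$, where $S_k(v, \sigma) := \sum_{w \sim v} \indicator{\sigma(w) = k}$, gives $\lVert \mu_v(\cdot \mid \sigma) - \mu_v(\cdot \mid \tau) \rVert_{\mathrm{TV}} \leq \tanh(\beta/N)$. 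Averaging over $v$ and invoking the path coupling theorem yields a one-step contraction with rate
\[
    \kappa = 1 - \frac{1 - \Delta \tanh(\beta/N)}{N} \in [0, 1).
\]

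For (ii), given $Y$, couple $P(Y, \cdot)$ and $Q(Y, \cdot)$ to select the same vertex $v$, then use an optimal coupling between $\mu_v(\cdot \mid Y)$ and the uniform distribution on $[q]$, which yields
\[
    d_W(P(Y, \cdot), Q(Y, \cdot)) \leq \frac{1}{N} \sum_{v \in V} \lVert \mu_v(\cdot \mid Y) - \mathrm{Unif}([q]) \rVert_{\mathrm{TV}}.
\]
Since $Y \sim \nu$ is i.i.d.\ uniform, the counts $S_k(v, Y)$ are $\mathrm{Binomial}(\Delta_v, 1/q)$ with mean $\Delta_v/q$ and per-coordinate variance $\Delta_v(q-1)/q^2$, where $\Delta_v$ denotes the degree of $v$. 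The key technical step is to show that the TV distance between the softmax and the uniform is controlled by the second moment of these deviations, giving a bound of the form $\ev{\lVert \mu_v(\cdot \mid Y) - \mathrm{Unif}([q]) \rVert_{\mathrm{TV}}^2} \leq \beta^2 (q-1) \Delta_v / N^2$. Applying Cauchy--Schwarz across $v \in V$ with $\sum_v \Delta_v = 2|E|$ yields $\ev{d_W(P(Y, \cdot), Q(Y, \cdot))} \leq (\beta \sqrt{q-1}/N) \sqrt{2|E|/N}$, and substituting this together with $1/(1 - \kappa) = N/(1 - \Delta \tanh(\beta/N))$ and $L_{d_H}(h) = \lVert L(h) \rVert_\infty$ into Theorem~\ref{thm:approx_thm_general_contracting} produces the claimed inequality.

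The main obstacle I anticipate is the sharp quadratic bound on $\lVert \mu_v(\cdot \mid Y) - \mathrm{Unif}([q]) \rVert_{\mathrm{TV}}^2$ in step (ii) without losing the leading constant: a naive Taylor expansion of the softmax in $\beta/N$ gives a linear term plus a quadratic remainder, and one must verify that the remainder can be absorbed cleanly (for instance, by writing $\mu_v(k \mid Y) - 1/q$ as an integral of its derivative in $\beta$ and estimating the derivative uniformly) to recover exactly the prefactor $\beta \sqrt{q-1}$ appearing in the statement rather than a strictly larger constant.
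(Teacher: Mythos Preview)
Your plan is essentially the paper's proof: the contraction in step (i) is exactly Lemma~\ref{lem:potts_bounded_degree_contracting} (with the $\tanh(\beta/N)$ bound coming from Lemma~\ref{lem:potts_glauber_contracting_bound}), and step (ii) is Theorem~\ref{thm:approx_thm_glauber_contracting}/Lemma~\ref{lem:potts_approx_meanfield} applied with the uniform product measure. Your flagged obstacle evaporates once you replace the Taylor expansion by the non-asymptotic fact that the softmax $g_\beta$ is $2\beta$-Lipschitz in $\ell_2$ (see \cite{GaoPavel2018}): since the uniform distribution equals $g_\beta(\mathbb{E}[S_v(Y)])$, one gets the pointwise bound $T_v(Y)\le \beta\sqrt{q}\,\lVert S_v(Y)-\mathbb{E}[S_v(Y)]\rVert_2$, and the multinomial variance $\sum_k\mathrm{Var}(S_v^k(Y))=\Delta_v(q-1)/(qN^2)$ then yields exactly $\mathbb{E}[T_v(Y)^2]\le \beta^2(q-1)\Delta_v/N^2$ with no remainder to absorb.
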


In particular, Theorem~\ref{thm:potts_approx_result} holds if $\beta < N / \Delta$, since $\tanh x \leq x$ for $x \geq 0$.
To interpret Theorem~\ref{thm:potts_approx_result}, observe that the bound tends to zero as $\beta \to 0$ (i.e.\ at infinite temperatures). 
Furthermore, the bound improves for sparser graphs with fewer edges---intuitively, because there are fewer interactions between the vertices---and may be simplified by bounding the average degree $2 |E| / N$ (from the handshaking lemma) by the maximum degree $\Delta$.
In the case when $|E| = O(N^2)$ and $\beta < 1$, the bound implies that the Wasserstein distance between the laws of $X$ and $Y$ is of order $O(\sqrt{N})$ (see Remark~\ref{rmk:wasserstein_interp}). Therefore, the Potts model on a dense graph can be coupled with a sequence of i.i.d.\ spins such that, on average, $O(\sqrt{N})$ of the vertices disagree, which is a vanishingly small proportion of the total number of vertices.
Finally, we note that Theorem~\ref{thm:potts_approx_result} generalises a similar result for the Ising model given in~\cite[Equation~(1.7)]{reinert2019approximating}.

The proof of Theorem~\ref{thm:potts_approx_result} appears in Section~\ref{sec:potts_bounded_degree}, and is relatively straightforward using the fact that the Glauber dynamics for the Potts model is contracting with respect to the Hamming distance according to~\eqref{eq:defn_contracting} whenever $\Delta \tanh(\beta / N) < 1$.

\subsection{Curie--Weiss--Potts model} \label{sec:intro_cwp}

We now turn our focus to the \emph{Curie--Weiss--Potts} (or mean-field Potts) model, which is the Potts model defined on the complete graph. The approximation results in this section, separated into the high-temperature and low-temperature regimes, describe the Curie--Weiss--Potts model at all inverse temperatures $\beta$, and do not follow from Theorem~\ref{thm:approx_thm_general_contracting} since the Glauber dynamics is not globally contracting for large $\beta$.
Here, we will identify the vertices with $[N] = \{1, \ldots, N\}$, and denote the state space by $\Omega := [q]^{N}$.
Let $S: \Omega \to \mathcal{S}$ be the map which sends any configuration $\sigma \in \Omega$ to the \emph{vector of proportions}
\begin{equation} \label{eq:CWP_vector_of_proportions}
    S(\sigma) := (S^{(1)}(\sigma), \ldots, S^{(q)}(\sigma))
\end{equation}
in the probability simplex $\mathcal{S} := \{ x \in \mathbb{R}_+^q: \lVert x \rVert_1 = 1\}$, where
\begin{equation*}
    S^{(k)}(\sigma) := \frac{1}{N} \sum_{j=1}^N \indicator{ \sigma(j)=k }, \quad k \in [q] .
\end{equation*}

In the absence of geometry, the state of the system is effectively characterised by the vector of proportions.
It is well known~\cite{ellis1990limit, costeniuc2005complete} that there exists a \emph{critical inverse temperature}
\begin{equation}
    \beta_c \equiv \beta_c(q) := \frac{(q-1)\log(q-1)}{q-2}
\end{equation}
separating the disordered and ordered phases of the Curie--Weiss--Potts model. The (canonical) \emph{equilibrium macrostates}, which describe equilibrium configurations in the thermodynamic limit, are the global minimisers of the function
\begin{equation} \label{eq:G_beta}
    G_{\beta}(s) := \beta\norm{s}_2^2 - \log\left( \sum_{i=1}^q e^{-2\beta s^i} \right), \quad s \in \reals^q,
\end{equation}
which appear in the Gibbs free energy $\varphi(\beta)$, defined by $2 \beta \varphi(\beta) = \min_{s \in \mathbb{R}^q} G_{\beta}(s) + \log q$. Let
\begin{equation} \label{eq:G_beta_minimisers}
    \mathfrak{S}_{\beta, q} := \argmin_{s \in \reals^q} G_\beta(s)
\end{equation}
be the set of global minimisers of $G_{\beta}$. When $\beta < \beta_c$, there is a unique equilibrium macrostate
\[
    \hat{e} := (1/q, \ldots, 1/q),
\]
corresponding to the disordered phase in which the most likely configurations have roughly equal proportions of each colour. When $\beta > \beta_c$, there are $q$ equilibrium macrostates, corresponding to the $q$ ordered phases in which the Gibbs measure is supported almost entirely on configurations with a particular dominant colour. At criticality, $\beta = \beta_c$, there are $q + 1$ minima in $\mathfrak{S}_{\beta_c, q}$, reflecting the coexistence of the ordered and disordered phases. Due to the symmetry of $G_{\beta}$, its minimisers are in the probability simplex, and therefore define probability distributions. We defer the precise expressions for the points in $\mathfrak{S}_{\beta, q}$ when $\beta \geq \beta_c$ to Theorem~\ref{pointofconcentration} later. 

Furthermore, a complete analysis of the mixing time of the Glauber dynamics for the Curie--Weiss--Potts model is provided in~\cite{cuff2012}. It is shown that the \emph{spinodal inverse temperature}
\begin{equation}
    \beta_s \equiv \beta_s(q) := \sup\left\{ \beta\geq 0 : \left( 1 + (q-1) e^{2\beta \frac{1 - qx}{q-1}} \right)^{-1} - x \neq 0 \quad\forall x \in (1/q, 1) \right\}
\end{equation}
is a \emph{dynamical threshold}: the mixing time is of order $O(N \log(N))$ with cutoff when $\beta < \beta_s$; of order $O(N^{4/3})$ when $\beta = \beta_s$; and is exponentially large in $N$ when $\beta > \beta_s$. The critical slowdown at $\beta_s$ marks the onset of \emph{metastability}, corresponding to the emergence of local minimisers of the free energy (i.e.\ of $G_\beta$). The coexistence of phases (possibly as metastable states) implies slow mixing since the dynamics must pass through states which are exponentially unlikely. The critical inverse temperatures satisfy $1 < \beta_s < \beta_c < q/2$ for $q \geq 3$.

\subsubsection*{High-temperature regime.}
Theorem~\ref{thm:potts_approx_result} can be applied to the complete graph to deduce that for $\beta < 1$ and any $h: \Omega \to \reals$, $|\E h(X) - \E h(Y)| = O(\sqrt{N})$, where $X$ is distributed according to the Curie--Weiss--Potts model and $Y$ is a random configuration with i.i.d.\ uniform spins.
Since the Glauber dynamics for the Curie--Weiss--Potts model mixes rapidly for all $\beta < \beta_s$ (with $\beta_s > 1$), one might expect that a similar bound should hold in the entire high-temperature regime, based on the heuristic that stationary distributions of rapid mixing Markov chains should be approximately independent (see~\cite{reinert2019approximating} for further discussion).
Indeed, we prove the following:

\begin{theorem} \label{WDbound_high}
Suppose that $\beta < \beta_s$. Let $X \in \Omega$ be distributed according to the Curie--Weiss--Potts model with inverse temperature $\beta$, and $Y \in \Omega$ be a random configuration where the colour of each vertex is sampled independently and uniformly at random. Then there exists a constant $\theta^* > 0$, depending on $\beta$ and $q$, such that for any function $h: \Omega \to \reals$,
\begin{equation*}
    \left\lvert \mathbb{E} h(X) - \mathbb{E} h(Y) \right\rvert \leq \norm{L(h)}_\infty \theta^* \sqrt{N}.
\end{equation*}
\end{theorem}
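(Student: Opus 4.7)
The plan is to invoke the strategy outlined by conditions (1)--(3) in the introduction, with $P$ the Glauber dynamics for the Curie-Weiss-Potts measure on $\Omega = [q]^N$ and $Q$ the Glauber dynamics whose stationary distribution is the product uniform measure on $\Omega$ (so each $Q$-update replaces $\sigma(v)$ at a uniformly chosen $v$ by an independent uniform spin in $[q]$). Condition (1), rapid mixing of $P$, is available for the entire regime $\beta < \beta_s$ from the analysis of~\cite{cuff2012}, which gives $t_{\mathrm{mix}} = O(N\log N)$. The other two conditions will be verified near the disordered equilibrium $\hat e = (1/q,\ldots,1/q)$, taking $\widetilde{\Sigma} := \{\sigma\in\Omega : \norm{S(\sigma)-\hat e}_\infty \leq r\}$ and $\widetilde{\Sigma}_0 := \{\sigma : \norm{S(\sigma)-\hat e}_\infty \leq r_0\}$ for small constants $r_0 < r$ chosen later.

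For condition (2), observe that the conditional spin distribution $\mu_v(k \mid \sigma) \propto \exp(2\beta S^k(\sigma))$ up to an $O(1/N)$ correction from removing $v$, so $\mu_v(\cdot\mid\sigma)$ depends on $\sigma$ only through $S(\sigma)$. Coupling two $P$-updates from $\sigma,\tau\in\widetilde{\Sigma}$ by choosing the same vertex and maximally coupling the resulting spins then reduces contraction to a Lipschitz estimate for the mean-field map $s \mapsto (e^{2\beta s^k}/\sum_j e^{2\beta s^j})_k$ near $\hat e$. The definition of $\beta_s$ guarantees that the Jacobian of this map at $\hat e$ has spectral radius strictly less than one; by continuity, if $r$ is small enough then the per-step expected Hamming contraction rate is some $\kappa < 1$ throughout $\widetilde{\Sigma}$. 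To show that starting in $\widetilde{\Sigma}_0$ the chain stays in $\widetilde{\Sigma}$ for at least $t_{\mathrm{mix}}$ steps with probability $1-e^{-\Omega(N)}$, I would combine (a) the fact that $S(\sigma_t)$ changes by $O(1/N)$ per step, (b) a supermartingale-type bound on $\norm{S(\sigma_t)-\hat e}^2$ in expectation, arising from the contractivity of the mean-field map, and (c) a Hoeffding-type concentration/union bound over the $O(N\log N)$ time window. Condition (3) is straightforward: if $Y$ has i.i.d.\ uniform spins, then $\ev{S(Y)} = \hat e$ and a standard Hoeffding inequality yields $\PR(Y \notin \widetilde{\Sigma}_0) \leq 2q \exp(-2 N r_0^2)$.

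With the three conditions in hand, the main estimate is the expected one-step Wasserstein discrepancy $\ev{d_W(P(Y,\cdot),Q(Y,\cdot))}$. Conditioning on the uniformly chosen vertex $v$ and using a maximal coupling of $\mu_v(\cdot\mid Y)$ with the uniform distribution on $[q]$, we have $d_W(P(Y,\cdot),Q(Y,\cdot)) \leq \frac{1}{N}\sum_{v} \norm{\mu_v(\cdot\mid Y) - \mathrm{Unif}([q])}_{\mathrm{TV}}$. A Taylor expansion of $\mu_v(k\mid\sigma)$ around $\hat e$ shows this total variation distance is $O(\norm{S(Y)-\hat e}_2) + O(1/N)$, and since $\ev{\norm{S(Y)-\hat e}_2} = O(1/\sqrt{N})$ under the product uniform measure, we obtain $\ev{d_W(P(Y,\cdot),Q(Y,\cdot))} = O(1/\sqrt N)$. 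Plugging this into the subset-contraction version of Theorem~\ref{thm:approx_thm_general_contracting} (absorbing the factor $N$ arising because the Hamming-Lipschitz constant of $h$ is $\norm{L(h)}_\infty$ and the dynamics touches one vertex per step) gives the desired bound $\norm{L(h)}_\infty \, \theta^* \sqrt N$ for a constant $\theta^*$ depending on $\beta$, $q$, and $\kappa$.

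The hard part will be establishing the persistence in condition (2): even though contraction in $\widetilde{\Sigma}$ gives a drift toward $\hat e$, quantifying the failure probability of the escape event down to $e^{-\Omega(N)}$ over the entire $O(N\log N)$ mixing window, and then showing that the contribution of the escape event to the Stein-type bound is negligible, is where the bulk of the technical work lies; the rest of the argument is a fairly direct application of the framework and of mean-field Taylor expansions.
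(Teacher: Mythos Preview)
Your proposal is correct and follows essentially the same route as the paper: Stein's method via Lemmas~\ref{lem:continuous_glauber_generator} and~\ref{lem:glauber_bound_fh_diff}, rapid mixing of $P$ from \cite{cuff2012}, local contraction of the Glauber coupling near $\hat e$ (governed by $\theta(\hat e,\beta,q)=2\beta/q<1$ since $\beta<\beta_s<q/2$), concentration of $Y$ near $\hat e$, and the $O(N^{-1/2})$ one-step discrepancy from a Taylor expansion of $g_\beta$ around $\hat e$. One point that will save you work: the persistence estimate you flag as ``the hard part'' (that the chain started near $\hat e$ stays in a constant-radius ball for the required time window with probability $1-o(1)$) is already available as \cite[Proposition~3.3, Part~(1)]{cuff2012}, so you do not need to re-derive it; also note that the paper runs the contraction over a window of length $\gamma N\log(N)^2$ rather than $O(N\log N)$, which is why the escape probability comes out as $\exp\{-cN/\log(N)^2\}$ rather than the $e^{-\Omega(N)}$ you wrote---still ample to kill the polynomial factors.
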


\begin{remark}[Optimality] \label{rmk:optimality_limit}
By considering the class of $1$-Lipschitz functions $h$, Theorem~\ref{WDbound_high} implies that the Wasserstein distance between $X \sim \mu$ and $Y \sim \nu$ satisfies $d_W(\mu, \nu) = O(\sqrt{N})$ (see Remark~\ref{rmk:wasserstein_interp}). This bound is optimal (in terms of dependence on $N$) based on the matching lower bound furnished by the following argument.
Let $W_X := \sqrt{N} (S(X) - \hat{e})$ and $W_Y := \sqrt{N} (S(Y) - \hat{e})$ be the centred and rescaled vectors of proportions of $X$ and $Y$ respectively. By the usual central limit theorem for the multinomial distribution, $W_Y$ converges weakly as $N \to \infty$ to a multivariate normal vector $N(0, \mathbf{\Sigma}_Y)$, whose covariance matrix $\mathbf{\Sigma}_Y$ has diagonal entries $(q - 1) / q^2$ and off-diagonal entries $-1/q^2$. From~\cite[Theorem~2.4]{ellis1990limit}, it is known that $W_X$ also converges weakly as $N \to \infty$ to a multivariate normal vector $N(0, \mathbf{\Sigma}_X)$, and it can be shown that $\mathbf{\Sigma}_X$ has diagonal entries $(q - 1) / (q^2 - 2q\beta)$ and off-diagonal entries $-1/(q^2 - 2\beta)$ (see the proof of~\cite[Proposition 2.2]{ellis1990limit}). Thus, for all $\beta > 0$, the limiting distributions of $W_X$ and $W_Y$ are different (and they coincide when $\beta \to 0$). 
Observe that for any $1$-Lipschitz function $g: \reals^q \to \reals$ with respect to the $\ell_1$ norm, and any optimal coupling $(X^*, Y^*)$ of $\mu$ and $\nu$ such that $\E d_H(X^*, Y^*) = d_W(\mu, \nu)$,
\begin{equation} \label{limit_comparison}
    \lvert \mathbb{E} g(W_X) - \mathbb{E} g(W_Y) \rvert
    \leq \sqrt{N} \, \mathbb{E}\lVert S(X^*) - S(Y^*) \rVert_1
    \leq 2 N^{-1/2} \, \mathbb{E} d_H(X^*, Y^*),
\end{equation}
since each location where $X^*$ and $Y^*$ differ contributes at most $2N^{-1}$ to $\norm{S(X^*) - S(Y^*)}_1$.
Thus, we must have $\liminf_{N \to \infty} d_W(\mu, \nu)/\sqrt{N} \geq c$ for some constant $c > 0$, otherwise~\eqref{limit_comparison} would imply that $W_X$ and $W_Y$ converge to the same distribution.
\end{remark}

\subsubsection*{Low-temperature regime.}
When $\beta \geq \beta_s$, the Glauber dynamics is not rapid mixing. Moreover, when there exist multiple equilibrium macrostates (when $\beta \geq \beta_c$), it does not make sense to compare the Curie--Weiss--Potts model to a single sequence of i.i.d.\ spins. However, in this low-temperature regime, we will show that the Curie--Weiss--Potts model, conditioned on being close enough to any equilibrium macrostate $x \in \mathfrak{S}_{\beta, q}$, can be approximated by a sequence of i.i.d.\ spins with probabilities given by $x$.
To define the restriction region, let
\begin{equation} \label{def:restriction_region}
    \widetilde{\Omega}(x, r) := \left\{ \sigma \in \Omega : \lVert S(\sigma) - x \rVert_{2} \leq r \right\},
\end{equation}
where $r > 0$ is a constant (only depending on $\beta$ and $q$) that will be chosen to be sufficiently small later. Given $x \in \mathfrak{S}_{\beta, q}$, we define $\nu$ to be the product measure on $\Omega$ where the colour of each vertex is sampled independently from the distribution given by $x$. We denote the Gibbs measure $\mu$ of the Curie--Weiss--Potts model and the product measure $\nu$, conditioned on $\widetilde{\Omega}(x, r)$ by, respectively,
\begin{equation}
    \tilde{\mu}(\cdot) := \mu(\, \cdot \mid \widetilde{\Omega}(x, r))
    \quad \text{and} \quad
    \tilde{\nu}(\cdot) := \nu(\, \cdot \mid \widetilde{\Omega}(x, r)).
\end{equation}

\begin{theorem} \label{WDbound}
Suppose that $\beta \geq \beta_s$, and $r$ is a sufficiently small constant. For any $x \in \mathfrak{S}_{\beta, q}$, let $\widetilde{X} \in \widetilde{\Omega}(x, r)$ and $\widetilde{Y} \in \widetilde{\Omega}(x, r)$ be random configurations distributed according to the conditional measures $\tilde{\mu}$ and $\tilde{\nu}$ respectively. Then there exists a constant $\theta^* > 0$, depending on $\beta$, $q$ and $r$, such that for any function $h: \widetilde{\Omega}(x, r) \to \reals$,
\begin{equation*}
    \lvert \mathbb{E} h(\widetilde{X}) - \mathbb{E} h(\widetilde{Y}) \rvert \leq \norm{L(h)}_\infty \theta^* \sqrt{N}.
\end{equation*}
\end{theorem}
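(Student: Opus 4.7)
My plan is to follow the three-condition framework described after Theorem~\ref{thm:approx_thm_general_contracting}, taking $P$ to be the Glauber dynamics for $\tilde{\mu}$ and $Q$ the Glauber dynamics for $\tilde{\nu}$, both on the state space $\widetilde{\Omega}(x,r)$. Since $P$ is not globally contracting when $\beta \geq \beta_s$, I would adapt the proof of Theorem~\ref{thm:approx_thm_general_contracting} to the setting where contraction only holds on a sub-region $\widetilde{\Omega}(x, r_1) \subsetneq \widetilde{\Omega}(x, r)$, following the ideas used for the exponential random graph model in~\cite{reinert2019approximating}. Using the stationarity of $\widetilde Y$ for $Q$, the standard telescoping identity combined with contraction of $P$ on $\widetilde{\Omega}(x, r_1)$ gives
\[
    \bigl|\mathbb{E} h(\widetilde X) - \mathbb{E} h(\widetilde Y)\bigr| \;\leq\; \frac{\|L(h)\|_\infty}{1-\kappa} \,\mathbb{E}\, d_W\bigl(P(\widetilde Y, \cdot),\, Q(\widetilde Y, \cdot)\bigr) \;+\; \text{(error)},
\]
where the error absorbs both the residual total variation at the mixing time scale $T = t_{\mathrm{mix}}$, which is polynomial in $N$ by Theorem~\ref{mixingtime}, and the probability that the trajectory of $P$ exits $\widetilde{\Omega}(x, r_1)$ during those $T$ steps.

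The main analytic input is the contraction of $P$ on $\widetilde{\Omega}(x, r_1)$. For $\sigma, \tau \in \widetilde{\Omega}(x, r_1)$ differing at a single vertex $w$, I would couple the Glauber moves by selecting a common target vertex $v$ uniformly and using an optimal coupling of $\tilde{\mu}_v(\cdot \mid \sigma)$ and $\tilde{\mu}_v(\cdot \mid \tau)$. When $v = w$ (probability $1/N$) the disagreement is erased; when $v \neq w$, the two conditional spin distributions differ in total variation by at most the Lipschitz constant of the mean-field softmax $F$ near $x$ times $\|S(\sigma) - S(\tau)\|_1 = O(1/N)$. Because $x \in \mathfrak{S}_{\beta,q}$ is a strict local minimiser of $G_\beta$, the Hessian $\nabla^2 G_\beta(x)$ is positive definite on the simplex tangent space, which is equivalent to $\|J_F(x)\|_{\mathrm{op}} < 1$; by continuity, the same bound persists throughout $\widetilde{\Omega}(x, r_1)$ for $r_1$ sufficiently small. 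Summing the $v \neq w$ contributions yields $\kappa \leq 1 - c/N$ for some $c = c(\beta, q) > 0$, hence $1/(1-\kappa) = O(N)$.

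Two confinement estimates then keep trajectories inside the contracting region. First, since $\widetilde Y$ is a conditioned product measure, Hoeffding's inequality together with the easy bound $\nu(\widetilde{\Omega}(x, r)) = 1 - e^{-\Omega(N)}$ gives $\widetilde Y \in \widetilde{\Omega}(x, r_2)$ with probability $1 - e^{-\Omega(N)}$ for any $r_2 < r$. Second, starting from $\widetilde{\Omega}(x, r_2)$ the chain $P$ remains in $\widetilde{\Omega}(x, r_1)$ for at least $T$ steps with probability $1 - e^{-\Omega(N)}$. This should follow from a drift-plus-Azuma argument: the one-step drift of $S(\sigma_t)$ points back towards $x$ by the mean-field self-consistency $x = F(x)$, so $\|S(\sigma_t) - x\|_2^2$ is an approximate supermartingale with bounded $O(1/N^2)$ quadratic variation, whose exponential concentration dominates the polynomial time horizon $T$.

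Finally, for the one-step Wasserstein distance: when $\widetilde Y$ lies in the interior of $\widetilde{\Omega}(x, r)$, any single-vertex flip stays inside the region, so $\tilde{\nu}_v(\cdot \mid \widetilde Y) = x$ exactly, while $\tilde{\mu}_v(\cdot \mid \widetilde Y) = F(S(\widetilde Y)) + O(1/N)$. Using $F(x) = x$ and Lipschitz continuity of $F$, the total variation is $O(\|S(\widetilde Y) - x\|_2)$, and Gaussian-type fluctuation bounds under $\tilde{\nu}$ give $\mathbb{E}\|S(\widetilde Y) - x\|_2 = O(1/\sqrt{N})$, whence $\mathbb{E}\, d_W(P(\widetilde Y, \cdot), Q(\widetilde Y, \cdot)) = O(1/\sqrt{N})$. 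Multiplying by $1/(1-\kappa) = O(N)$ yields the advertised $\|L(h)\|_\infty \theta^* \sqrt N$ bound. The main obstacle is the coordinated calibration of the three radii $r_2 < r_1 < r$: contraction forces $r_1$ small; confinement of $\widetilde Y$ forces $r_2$ strictly smaller than $r_1$; the boundary identity $\tilde{\nu}_v(\cdot \mid \widetilde Y) = x$ forces $r_1$ bounded away from $r$; and throughout, the exit probability of the $P$-trajectory over $T$ mixing steps must remain exponentially small. Reconciling all of these while simultaneously driving the drift-plus-concentration estimate is where the low-temperature geometry of $G_\beta$ enters most delicately.
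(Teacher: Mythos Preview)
Your proposal follows essentially the same route as the paper: Stein's method via the restricted Glauber generators (Lemma~\ref{lem:continuous_glauber_generator}), bounding the Stein solution difference $|f_h(\sigma)-f_h(\tau)|$ by splitting the coupling sum at a time $t_N$ of order $N\log(N)^2$, handling the tail via the mixing time bound of Theorem~\ref{mixingtime}, handling the head via a contracting coupling on a smaller ball (Lemma~\ref{GDcontrcting}) combined with a drift-plus-concentration confinement argument (Lemma~\ref{GlauberConcen}), and finishing with multinomial concentration of $S(\widetilde Y)$ around $x$. The nested-radii structure you identify as the main obstacle is precisely what the paper manages with the balls of radii $r/10$, $r/5$, $4r/5$, $r$.

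One point deserves care. You derive the contraction constant from the fact that $x$ is a strict local minimiser of $G_\beta$, arguing that positive definiteness of $\nabla^2 G_\beta(x)$ on the simplex tangent space is equivalent to $\|J_F(x)\|_{\mathrm{op}}<1$. This gives only that the \emph{symmetric part} of the Jacobian (restricted to the tangent space) has spectrum below $1$, which the paper records as $\lambda(x,\beta,q)<1$ and uses for the drift of $\|S_t-x\|_2^2$. It does \emph{not} directly yield the $\ell_1$-Lipschitz bound $\theta(x,\beta,q)<1$ that the Hamming-distance coupling needs; for the asymmetric macrostates $\mathbf{T}^j\check{s}_{\beta,q}$ the matrix $\mathbf{A}(x)$ is genuinely nonsymmetric, and the paper establishes $\theta<1$ by a separate computation (Lemmas~\ref{L1contract} and~\ref{expforcond}) exploiting the identity $a'+(q-1)b=a$. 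Both inequalities are bundled into Condition~\ref{Cond:p}, and both are needed. A second minor point: the confinement bound over $t_N=\gamma N\log(N)^2$ steps is $\exp\{-cN/(\gamma\log(N)^2)\}$ rather than $e^{-\Omega(N)}$ (Lemma~\ref{GlauberConcen}\ref{GlauberConcen_part1}); this is still negligible against the polynomial factors, but your Azuma heuristic should be calibrated accordingly.
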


A similar observation on the optimality of Theorem~\ref{WDbound} as in Remark~\ref{rmk:optimality_limit} can also be formulated by centring the vector of proportions around the chosen $x \in \mathfrak{S}_{\beta, q}$ and using known central limit-type results for the conditional measure $\tilde{\mu}$~\cite[Theorem~2.5]{ellis1990limit}.

As one would expect, it can be shown that in Theorem~\ref{WDbound}, $\theta^* \to 0$ as $\beta \to \infty$ (assuming that $N \to \infty$ and $r \to 0$ together at appropriate rates); see Remark~\ref{rmk:thetastar_limit}. Similarly, it can be shown that in Theorem~\ref{WDbound_high}, $\theta^* \to 0$ as $\beta \to 0$.

The proofs for Theorems~\ref{WDbound_high} and~\ref{WDbound} are given in Section~\ref{CWP}. As discussed before, the key technical difficulty is that the Glauber dynamics for the Curie--Weiss--Potts model is not contracting on the entire state space, but only \emph{locally} around each of the points in $\mathfrak{S}_{\beta, q}$.
To address this, we will analyse the \emph{restricted Glauber dynamics}---this is a Markov chain, denoted by $(\tilde{\sigma}_t)_{t \geq 0}$, which has $\tilde{\mu}$ as its stationary distribution, and evolves like the usual Glauber dynamics except that any moves out of $\widetilde{\Omega}(x, r)$ are rejected. More precisely, its transitions are as follows: given the current configuration $\tilde{\sigma}_t \in \widetilde{\Omega}(x, r)$,
\begin{enumerate}[label=(\arabic*)]
    \item Generate a new configuration $\sigma' \in \Omega$ according to the usual Glauber dynamics.
    \item If $\sigma' \in \widetilde{\Omega}(x, r)$, then set $\tilde{\sigma}_{t+1} = \sigma'$. Otherwise,  set $\tilde{\sigma}_{t+1} = \tilde{\sigma}_{t}$ (i.e.\ the move is rejected).
\end{enumerate}
We say that the restricted Glauber dynamics $\tilde{\sigma}_t$ is \emph{on the boundary} if there is a possible transition that can lead to rejection.
As a key ingredient for proving Theorems~\ref{WDbound_high} and~\ref{WDbound}, we also prove the following bound on the mixing time of the restricted Glauber dynamics in Section~\ref{CWP}.

\begin{theorem} \label{mixingtime}
Suppose that $\beta \geq \beta_s$, and $r$ is a sufficiently small constant. For any $x \in \mathfrak{S}_{\beta, q}$, let $t^x_{\mathrm{mix}}(\epsilon)$ be the mixing time of the Glauber dynamics for the Curie--Weiss--Potts model restricted to $\widetilde{\Omega}(x, r)$. Then
\[
    t^x_{\mathrm{mix}} := t^x_{\mathrm{mix}}\left( 1/4 \right) = O\left( N \log(N) \right).
\]
\end{theorem}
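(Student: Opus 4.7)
The plan is to prove rapid mixing of the restricted Glauber dynamics via path coupling, leveraging the local contractivity of the associated mean-field map at each equilibrium macrostate $x \in \mathfrak{S}_{\beta,q}$. Because the model lives on the complete graph, the conditional spin distribution at a vertex $v$ depends only on the vector of proportions $S(\sigma_{-v})$ of the other vertices, and equals $\pi(S(\sigma)) + O(1/N)$, where the mean-field map is $\pi_k(s) := e^{2\beta s^k}/\sum_j e^{2\beta s^j}$. The points of $\mathfrak{S}_{\beta,q}$ are exactly the fixed points of $\pi$ on the simplex corresponding to the minimizers of $G_\beta$, and a direct calculation gives $D\pi(x) = 2\beta\,(\diag(x) - xx^\top)$. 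Using the explicit form of these minimizers from Theorem~\ref{pointofconcentration}, I verify that for every $x \in \mathfrak{S}_{\beta,q}$ the spectral radius of $D\pi(x)$ on the tangent space to the simplex is strictly less than $1$, which is equivalent to the strict local minimality of $x$ for $G_\beta$ (positive definiteness of its Hessian). By continuity, for sufficiently small $r$ there exists a (possibly weighted) norm and $\alpha > 0$ such that $\pi$ is a $(1-\alpha)$-contraction on $\{s : \|s - x\|_2 \leq r\}$.

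The main path coupling estimate is then as follows. For $\sigma, \tau \in \widetilde{\Omega}(x,r)$ differing only at a vertex $u$, I couple one step of the restricted dynamics by choosing the same random vertex $v$ in both chains and using the optimal coupling of $\mu_v(\cdot \mid \sigma)$ and $\mu_v(\cdot \mid \tau)$. If $v = u$, the two conditional distributions coincide (they depend only on the agreeing spins outside $u$), so the proposed updates are identical and under the restriction either both are accepted (distance drops to $0$) or both rejected (distance remains $1$). If $v \neq u$, the local contractivity of $\pi$ gives $d_{\mathrm{TV}}(\mu_v(\cdot \mid \sigma), \mu_v(\cdot \mid \tau)) \leq (1-\alpha)/(N-1) + O(N^{-2})$; summing over $v$ yields
\[
    \E[d_H(\sigma_1, \tau_1)] \leq 1 - \frac{1}{N} + \frac{1}{N}\sum_{v \neq u} d_{\mathrm{TV}}(\mu_v(\cdot \mid \sigma), \mu_v(\cdot \mid \tau)) \leq 1 - \frac{\alpha}{N} + O(N^{-2}).
\]
The standard path coupling theorem (Bubley--Dyer; see Levin--Peres--Wilmer, Corollary~14.7) then yields $t^x_{\mathrm{mix}}(1/4) = O(N \log N / \alpha) = O(N \log N)$.

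The hard part is treating the boundary of $\widetilde{\Omega}(x,r)$, which enters in two ways. First, rejections in the restricted chain can couple asymmetrically between $\sigma$ and $\tau$, but since a single update shifts $S$ by at most $2/N$ and the two configurations agree outside a single vertex, their acceptance/rejection events can be coupled to coincide except on an event of probability $O(1/N)$, whose contribution is absorbed into the $O(N^{-2})$ error. Second, path coupling requires extending the adjacent-pair contraction along a connecting path of single-vertex flips within the state space, yet intermediate configurations may lie just outside $\widetilde{\Omega}(x,r)$; I resolve this by carrying out the contraction estimate on a slightly enlarged region $\widetilde{\Omega}(x, r')$ with $r < r' < r_0$, where $r_0$ is the radius up to which the $(1-\alpha)$-contraction of $\pi$ persists, and observing that any two configurations in $\widetilde{\Omega}(x, r)$ can be joined by a Hamming-shortest path staying in $\widetilde{\Omega}(x, r')$, so the telescoped bound still delivers the required contraction on $\widetilde{\Omega}(x,r)$.
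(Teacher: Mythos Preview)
Your path-coupling plan breaks at the boundary of $\widetilde\Omega(x,r)$, and the gap is not a technicality that can be absorbed into lower-order terms. Consider neighbouring $\sigma,\tau\in\widetilde\Omega(x,r)$ with $\|S(\sigma)-x\|_2$ within $O(1/N)$ of $r$. For $v\neq u$ the proposed proportion vectors $S(\sigma^{(v,k)})$ depend only on the pair $(\sigma(v),k)\in[q]^2$; hence for each of the $O(1)$ pairs $(i,k)$ that straddle the sphere, \emph{every} vertex $v$ with $\sigma(v)=i$ (a $\Theta(1)$ fraction of all vertices) yields asymmetric acceptance. The probability of an asymmetric rejection is therefore $\Theta(1)$, not $O(1/N)$, and each such event increases $d_H$ by $1$. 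This contributes $\Theta(1)$, not $O(N^{-2})$, to the expected one-step change in distance, swamping the $\alpha/N$ contraction you need for path coupling. Your fix via an enlarged region $\widetilde\Omega(x,r')$ does not help either: the chain whose mixing time you must bound is the one with rejections at radius $r$, and its transitions (hence the coupling you must analyse) are not the same as those of the chain restricted to $\widetilde\Omega(x,r')$.

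The paper's proof is organised precisely to avoid ever needing contraction at the boundary. It first shows a drift estimate for $\|S(\tilde\sigma_t)-x\|_2^2$ (Lemma~\ref{GlauberConcen}): from any start the restricted chain enters $\widetilde\Omega(x,r/5)$ in $O(N)$ steps, and once there it stays inside $\widetilde\Omega(x,4r/5)$ for time $\gamma N\log(N)^2$ except with probability $\exp\{-cN/\log(N)^2\}$. Contraction (Lemma~\ref{GDcontrcting}) is then proved \emph{only on the event} $B$ that both coupled chains remain in $\widetilde\Omega(x,4r/5)$, where no rejections occur at all; the complementary event is handled by the crude bound $N\cdot\PR(B^c)$. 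The mixing-time bound follows by a two-phase coupling: run independently until both chains are in $\widetilde\Omega(x,r/5)$, then couple and contract. A secondary point: your spectral-radius argument gives contraction of $D\pi(x)$ in \emph{some} norm, but the coupling you write down compares Hamming distance to $\|S(\sigma)-S(\tau)\|_1$, so you actually need the $\ell_1$ Lipschitz bound $\theta(x,\beta,q)<1$ of Lemma~\ref{L1contract}, which the paper establishes directly.
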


An analogous result for the mean-field Ising model with spins on the complete graph (where $\beta_s = \beta_c = 1$) was established in~\cite{levin2010glauber}, where a variant of the restricted Glauber dynamics was shown to have mixing time $O(N \log(N))$. 
The proof of this result exploits the symmetry of the distribution of the normalised magnetisation under the corresponding Gibbs measure, which is unique to the mean-field Ising model and does not apply in our setting.

\subsection{Related works}

\subsubsection*{Limit theorems for the Curie--Weiss--Potts model.}
For the Curie--Weiss--Potts model $X \sim \mu$, the rate of convergence of the proportions vector $S(X)$ is studied in~\cite{EichelsbacherMartschink2015}, quantifying the central limit theorems obtained in~\cite{ellis1990limit}. In particular, it is proved in~\cite[Theorem~1.3]{EichelsbacherMartschink2015} that if $W_X = \sqrt{N} \left( S(X) - x \right)$ is the scaled and centred proportions vector for any $x \in \mathfrak{S}_{\beta,q}$, and $\mathbf{\Sigma}^{1/2} Z$ is a multivariate normal vector with covariance matrix $\mathbf{\Sigma} = \ev{W_X W_X^{\top}}$, then in the high-temperature regime $\beta < \beta_c$,
\[
    |\mathbb{E} g(W_X) - \mathbb{E} g(\mathbf{\Sigma}^{1/2} Z)| \leq C N^{-1/2}
\]
for every three times differentiable function $g: \reals^q \to \reals$ with bounded derivatives. In the low-temperature regime $\beta \geq \beta_c$, a similar result with the same $O(N^{-1/2})$ rate is proved for the corresponding measures conditional on $S(X) \in \widetilde{\Omega}(x, r)$ in~\cite[Theorem~1.5]{EichelsbacherMartschink2015}.
These results can be compared to Theorems~\ref{WDbound_high} and~\ref{WDbound} respectively (also see Remark~\ref{rmk:optimality_limit}).

\subsubsection*{Glauber dynamics for the Curie--Weiss--Potts model.}

An alternative proof of the rapid mixing of the Glauber dynamics for the (generalised) Curie--Weiss--Potts model in the subcritical regime $\beta < \beta_s$ when there is a unique equilibrium macrostate $x = \hat{e}$ is given in~\cite{kovchegov2015rapid} using the aggregate path coupling method. Some of the ideas used in the analysis, extended to the low-temperature regime where there are multiple, possibly asymmetric equilibria $x \in \mathfrak{S}_{\beta, q}$, appear in the proofs of Theorems~\ref{WDbound_high}, \ref{WDbound}, and~\ref{mixingtime} to show that the Glauber dynamics restricted to $\widetilde{\Omega}(x, r)$ is contracting as long as it is sufficiently close to $x$.

The mixing time of the Glauber dynamics for the Curie--Weiss--Potts model, conditional on being close to an equilibrium macrostate, in Theorem~\ref{mixingtime} is related to different notions of mixing for when there exist metastable states that take the chain exponentially long to escape.
In~\cite[Theorem~4]{cuff2012}, it is shown that in the subcritical regime $\beta_s \leq \beta < \beta_c$, the Glauber dynamics mixes rapidly with cutoff at $(2(1 - 2 \beta / q))^{-1} N \log(N)$ after excluding a set of initial configurations with probability exponentially small in $N$---this is called \emph{essential mixing}. A set of intricate couplings is used in this paper to obtain the correct constant for the cutoff time.

The dynamics in the low-temperature regime $\beta \geq \beta_c$, which requires a more delicate treatment of asymmetric equilibrium macrostates in the presence of phase coexistence, is studied independently in our paper and the concurrent work~\cite{blanca2024meanfield}.
The key idea behind the rapid mixing results for the Glauber dynamics for the Curie--Weiss--Potts model, following~\cite{cuff2012}, is to control the drift of the proportions chain $S(\sigma_t)$ induced by the dynamics.
It is shown in~\cite{blanca2024meanfield} that with a carefully chosen product measure initialisation that is not too close to a saddle point of the free energy, the Glauber dynamics mixes rapidly in $O(N \log(N))$ time in the low-temperature regime.
Their proof technique relies on approximating projections of the high-dimensional proportions chain by tractable one-dimensional processes.
This type of mixing, starting from a specific initial distribution, is called \emph{metastable mixing} (also see~\cite{lubetzky2021fast, gheissari2022low} and \cite{bresler2024metastable} where this notion has been studied for the Ising and exponential random graph models).
Our focus, on the other hand, is on bounding the worst-case mixing time of the Glauber dynamics restricted near a particular equilibrium macrostate. The main difficulty is to control the behaviour of the dynamics at the boundary, and we adopt a matrix representation for the drift of the proportions chain that is well-suited for this analysis.

An important property of the usual worst-case mixing time~\eqref{eq:defn_mixingtime}, which Theorem~\ref{mixingtime} bounds, is that the total variation distance to stationarity decays exponentially fast beyond this time (see~\cite[Equation~(4.33)]{levin2017markov}). This plays an essential role for our purpose of distributional approximation using Stein's method (Theorem~\ref{WDbound}). This property is not exhibited by the mixing time from a specific initial distribution, i.e.\ corresponding to the notions of essential mixing or metastable mixing described above (see the discussion following Definition~1.4 in~\cite{bresler2024metastable}).

\subsubsection*{Temporal and spatial mixing.}
Deep connections have been found between rapid mixing and the spatial properties of spin systems on a lattice~\cite{holley1985possible, aizenman1987rapid, stroock1992logarithmic, martinelli1994approachI, martinelli1994approachII, dyer2004mixing}, and it is widely believed that a correspondence between temporal and spatial mixing holds for many models in statistical mechanics. By now, it is well understood that a contracting Markov chain mixes rapidly. The path coupling method~\cite{bubley1997path} is a fundamental tool for establishing that a chain is contracting by showing that~\eqref{eq:defn_contracting} holds for all pairs of neighbouring configurations. This condition has an intimate relation with Dobrushin's condition for the uniqueness of the Gibbs measure (see, e.g.,~\cite{ollivier2010survey, paulin2016mixing, dorlas2023wasserstein}), which is known to imply exponential decay of correlations~\cite{dobrushin1985constructive, gross1979decay, kunsch1982decay}; i.e.\ each spin is asymptotically independent of the vertices far away.

\subsection{Organisation}

The rest of the paper is organised as follows. Section~\ref{sec:prelim} contains preliminaries on the Glauber dynamics and Stein's method.
Section~\ref{sec:potts_bounded_degree} describes the proof of Theorem~\ref{thm:potts_approx_result} for the Potts model on general bounded-degree graphs.
In Section~\ref{CWP}, a detailed analysis of the Curie--Weiss--Potts model and its restricted Glauber dynamics is provided, and the proofs of Theorems~\ref{WDbound_high}, \ref{WDbound}, and~\ref{mixingtime} are given.
Finally, the proofs of some deferred technical lemmas appear in Appendix~\ref{app:deferred_proofs}.

\section{Preliminaries} \label{sec:prelim}

\subsection{Glauber dynamics for the Potts model}

Consider the Potts model with inverse temperature $\beta > 0$ on a graph $G = (V, E)$ on $N$ vertices with $q \geq 3$ colours. For any vertex $v \in V$, let $\mathcal{N}_v := \{ u \in V : (u, v) \in E \}$ denote the neighbours of $v$. For any configuration $\sigma \in [q]^V$, let
\begin{equation} \label{general_magnetization_vector}
    S_v(\sigma) := (S_v^{(1)}(\sigma), \ldots, S_v^{(q)}(\sigma)) \in \reals^q
\end{equation}
be the vector whose coordinates
\begin{equation} \label{general_magnetization_vector_cor}
    S_v^{(k)}(\sigma) := \frac{1}{N} \sum_{u \in \mathcal{N}_v} \indicator{ \sigma(u)=k }
\end{equation}
indicate the (scaled) proportion of neighbours of $v$ with each of the $q$ colours.\footnote{This definition essentially coincides with the vector of proportions defined in~\eqref{eq:CWP_vector_of_proportions} for the Curie--Weiss--Potts model, where it is convenient to include the colour of the vertex $v$ in the count due to symmetry.} Moreover, for $s \in \reals^q$, define the vector
\begin{equation} \label{updatingprobvector}
    g_{\beta}(s) := \left( g^{(1)}_{\beta}(s), \ldots, g^{(q)}_{\beta}(s) \right) \in \reals^q_+,
\end{equation}
where for each colour $k \in [q]$,
\begin{equation} \label{updatingprob}
    g_{\beta}^{{(k)}}(s) := \frac{e^{2\beta s^{{(k)}}}}{\sum_{j=1}^{q} e^{2\beta s^{{(j)}}}}.
\end{equation}
In other words, we can identify $g_\beta$ with the softmax function which maps vectors in $\reals^q$ to the $q$-dimensional probability simplex $\mathcal{S} := \{ x \in \mathbb{R}_+^q : \norm{x}_1 = 1 \}$.
Furthermore, we denote $\sigma^{(v, k)}$ to be the configuration obtained from $\sigma$ by recolouring the vertex $v$ with colour $k$: that is,
\begin{equation}
    \sigma^{(v, k)}(u) =
    \begin{cases}
        k         & \text{if } u = v, \\
        \sigma(u) & \text{if } u \ne v.
    \end{cases}
\end{equation}
Using this notation, we can specify the Glauber dynamics for the Gibbs measure $\mu$ of the Potts model more precisely: given the current configuration $\sigma$, the next step selects a vertex $v$ uniformly at random and recolours it with a new colour $k \in [q]$, selected with probability
\begin{equation} \label{eq:potts_condspindist}
    \mu_v(k \mid \sigma_t) = g_\beta^{{(k)}}(S_v(\sigma_t)),
\end{equation}
to obtain the next configuration $\sigma^{(v, k)}$. That is, the conditional spin distribution for the Potts model is given by $\mu_v(\cdot \mid \sigma) = g_\beta(S_v(\sigma))$ at each vertex $v$. If $\nu$ is another measure on $[q]^V$, we will also denote the total variation distance between its conditional spin distribution $\nu_v(\cdot \mid \sigma)$ and $\mu_v(\cdot \mid \sigma)$ by
\begin{equation} \label{eq:condspindist_tv}
    T_v(\sigma) := \norm{\mu_v(\cdot \mid \sigma) - \nu_v(\cdot \mid \sigma)}_{\mathrm{TV}},
\end{equation}
and collect these values in the vector $T(\sigma) := (T_v(\sigma))_{v \in V} \in \reals^V_+$.

\subsection{Stein's method for approximating the stationary distributions of Glauber dynamics} \label{sec:prelim_steins}

In this section, we provide an overview of the proof of Theorem \ref{thm:approx_thm_general_contracting}, which uses the generator approach of Stein's method, in order to establish some intermediary results that we will use when the chain is not globally contracting.

The idea behind the generator approach to compare two random vectors $X \sim \mu$ and $Y \sim \nu$ on a finite metric space $(\Sigma, d)$ is to find generators $\mathcal{A}_\mu$ and $\mathcal{A}_\nu$ for Markov processes with stationary distributions $\mu$ and $\nu$ respectively, and for any test function $h: \Sigma \to \reals$, solve the Stein equation
\begin{equation} \label{eq:stein_eqn}
    \mathcal{A}_\mu f_h(\sigma) = h(\sigma) - \E h(X)
\end{equation}
for $f_h: \Sigma \to \reals$. Since $\E \mathcal{A}_\nu f(Y) = 0$ if and only if $Y \sim \nu$, the two measures $\mu$ and $\nu$ can be compared in terms of the solution to the Stein equation by
\begin{equation} \label{eq:stein_eqn_comparison}
    |\mathbb{E} h(X) - \mathbb{E} h(Y)| = |\mathbb{E} \mathcal{A}_\mu f_h(Y) - \mathbb{E} \mathcal{A}_\nu f_h(Y)| .
\end{equation}
In particular, if $P$ and $Q$ are Markov chains on $\Sigma$ with stationary distributions $\mu$ and $\nu$ (or more precisely, their transition kernels), then the Markov processes can be chosen to be the continuous-time Markov chains with Exponential rate one holding times and jump probabilities induced by $P$ and $Q$, which have generators $\mathcal{A}_\mu = P - I$ and $\mathcal{A}_\nu = Q - I$ respectively. If $(\bar{X}_t)_{t \geq 0}$ denotes the continuous-time Markov chain with stationary distribution $\mu$, then it is known that the Stein equation has the following well-defined solution if $P$ is irreducible (e.g.\ see~\cite{reinert2005three}):
\begin{equation} \label{eq:stein_eqn_soln}
    f_h(\sigma) \coloneqq -\int^\infty_0 \cev{h(\bar{X}_t) - \mathbb{E} h(X)}{\bar{X}_0 = \sigma} \dd t , \quad \sigma \in \Sigma .
\end{equation}
If $P$ is contracting according to~\eqref{eq:defn_contracting}, then it was shown by~\cite{blanca2022mixing} that the irreducibility assumption can be removed (i.e.\ $f_h$ is convergent and remains well-defined).

Note that $(P - Q)f_h(\sigma) = \ev{f_h(X^\sigma_1) - f_h(Y^\sigma_1)}$, where $(X^\sigma_1, Y^\sigma_1)$ is a coupling of $P(\sigma, \cdot)$ and $Q(\sigma, \cdot)$. Thus, by definition of the Wasserstein distance~\eqref{eq:wasserstein_defn}, $|(P - Q)f_h(\sigma)| \leq L_d(f_h) \cdot d_W(P(\sigma, \cdot), Q(\sigma, \cdot))$. Hence, by substituting $\mathcal{A}_\mu = P - I$ and $\mathcal{A}_\nu = Q - I$ into~\eqref{eq:stein_eqn_comparison}, we have
\begin{equation} \label{eq:approx_wasserstein_Lfh}
    |\mathbb{E} h(X) - \mathbb{E} h(Y)| 
    = \mathbb{E} |(P - Q) f_h(Y)|
    \leq L_d(f_h) \cdot \ev{d_W(P(Y, \cdot), Q(Y, \cdot))}. 
\end{equation}
The following argument furnishes the key estimate required to bound $L_d(f_h)$, the Lipschitz constant of $f_h$. Denote the independent jump times of the continuous-time chain $(\bar{X}_t)_{t \geq 0}$ by $0 = T_0 < T_1 < T_2 < \dots$, and the embedded jump chain by $(X_\ell)_{\ell \in \naturals}$ (i.e.\ $\bar{X}_t = X_{\ell}$ for $t \in [T_\ell, T_{\ell+1})$). By reducing to the discrete skeleton of the continuous-time chain, $f_h$ satisfies
\begin{equation}
\begin{aligned}
    |f_h(\sigma) - f_h(\tau)|
    &\leq \int^\infty_0 \left| \cev{h(\bar{X}_t)}{\bar{X}_0 = \sigma} - \cev{h(\bar{X}_t)}{\bar{X}_0 = \tau} \right| \dd t \\
    &\leq \sum_{\ell=0}^\infty \mathbb{E} \int^{T_{\ell+1}}_{T_\ell} \left| h(X^\sigma_\ell) - h(X^\tau_\ell) \right| \dd t.
\end{aligned}
\end{equation}
Since the jump times are independent of the discrete skeleton and have mean one,
\begin{equation} \label{eq:fh_lipschitz_bound}
    |f_h(\sigma) - f_h(\tau)|
    = \sum_{\ell=0}^\infty \mathbb{E} \left| h(X^\sigma_\ell) - h(X^\tau_\ell) \right| \\
    \leq L_d(h) \sum_{\ell=0}^\infty d_W(P^\ell(\sigma, \cdot), P^\ell(\tau, \cdot)) ,
\end{equation}
where the inequality follows from the definition of the Wasserstein distance~\eqref{eq:wasserstein_defn}. Hence, Theorem~\ref{thm:approx_thm_general_contracting} follows from combining~\eqref{eq:approx_wasserstein_Lfh} and the following estimate for $L_d(f_h)$, obtained from a straightforward application of the contracting assumption~\eqref{eq:defn_contracting}. Indeed, since $P$ is contracting,
\[
    d_W(P^\ell(\sigma, \cdot), P^\ell(\tau, \cdot)) \leq \kappa^\ell \cdot d(\sigma, \tau)
\]
for all $\ell \geq 0$. Therefore, from~\eqref{eq:fh_lipschitz_bound}, we have
\[
    |f_h(\sigma) - f_h(\tau)|
    \leq L_d(h) \sum_{\ell=0}^\infty \kappa^\ell \cdot d(\sigma, \tau)
    = \frac{L_d(h)}{1 - \kappa} d(\sigma, \tau),
\]
which shows that $L_d(f_h) \leq L_d(h) / (1 - \kappa)$.

To conclude this section, we extract some more specific estimates from the general argument above when $\Sigma = [q]^V$ is the set of configurations on an underlying graph $G = (V, E)$, $d = d_H$ is the Hamming distance, and $P$ and $Q$ are the Glauber dynamics for $\mu$ and $\nu$. In this setting, the generator $\mathcal{A}_\mu$ for the continuous-time Glauber dynamics induced by $P$ takes the form
\begin{equation} \label{eq:glauber_generator}
    \mathcal{A}_\mu f(\sigma) = \frac{1}{N} \sum_{v \in V} \left[ \sum_{k \in [q]} \mu_v( k \mid \sigma ) ( f(\sigma^{(v, k)}) - f(\sigma) ) \right] , \quad \sigma \in [q]^V,
\end{equation}
recalling that $\sigma^{(v, k)}$ denotes the configuration obtained from $\sigma$ by recolouring the vertex $v$ with colour $k$, and $\mu_v(\cdot \mid \sigma)$ is the conditional spin distribution. The generator $\mathcal{A}_\nu$ for the continuous-time chain induced by $Q$ takes the same form as~\eqref{eq:glauber_generator} with $\nu_v(\cdot \mid \sigma)$ in place of $\mu_v(\cdot \mid \sigma)$.

The following lemmas will be used in the proof of the approximation results for the Curie--Weiss--Potts model in Section~\ref{CWP}.
By putting in the specific form of the generators $\mathcal{A}_\mu$ and $\mathcal{A}_\nu$ from above into the Stein's method bound~\eqref{eq:stein_eqn_comparison}, we immediately deduce the following analogue of~\eqref{eq:approx_wasserstein_Lfh}, specialised to the Glauber dynamics:

\begin{lemma} \label{lem:continuous_glauber_generator}
Let $P$ and $Q$ be the Glauber dynamics on $[q]^V$ with stationary distributions $\mu$ and $\nu$ respectively. For any $h: [q]^V \to \reals$, we have
\begin{equation} \label{eq:glauber_generator_1}
\begin{aligned}
    |\mathbb{E} h(X) - \mathbb{E} h(Y)|
    &\leq \frac{1}{N} \sum_{v \in V} \sum_{k \in [q]} \ev{ \left| \mu_v( k \mid Y ) - \nu_v( k \mid Y ) \right| \cdot \left| f_h(Y^{(v,k)}) - f_h(Y) \right| } .
\end{aligned}
\end{equation}
\end{lemma}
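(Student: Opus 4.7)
The plan is to specialise the generator approach to Stein's method reviewed earlier in this section to our choice of $P$ and $Q$ being Glauber dynamics, and to substitute the explicit form~\eqref{eq:glauber_generator} of the generators directly into the bound~\eqref{eq:stein_eqn_comparison}. Given $h : [q]^V \to \reals$, let $f_h$ denote the solution~\eqref{eq:stein_eqn_soln} of the Stein equation with respect to $\mathcal{A}_\mu = P - I$, the generator of the continuous-time Markov chain induced by $P$. The solution is well-defined under the standard regularity assumptions (irreducibility of $P$ or the local contractivity in the sense of~\cite{blanca2022mixing}), and these are satisfied in all of our downstream applications.

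First I would recover the identity~\eqref{eq:stein_eqn_comparison} in this setting. Since $Y \sim \nu$ is stationary for the continuous-time chain induced by $Q$, we have $\E[\mathcal{A}_\nu f_h(Y)] = 0$ with $\mathcal{A}_\nu = Q - I$, and combining this with the expectation of the Stein equation~\eqref{eq:stein_eqn} evaluated at $\sigma = Y$ gives
\[
\E h(X) - \E h(Y) = -\E\bigl[(\mathcal{A}_\mu - \mathcal{A}_\nu) f_h(Y)\bigr].
\]
Next I would unfold the generators by using~\eqref{eq:glauber_generator}. Observing that $\mathcal{A}_\mu$ and $\mathcal{A}_\nu$ take the same form and differ only in the respective conditional spin distributions, one immediately obtains the pointwise identity
\[
(\mathcal{A}_\mu - \mathcal{A}_\nu) f_h(\sigma) = \frac{1}{N} \sum_{v \in V} \sum_{k \in [q]} \bigl[\mu_v(k \mid \sigma) - \nu_v(k \mid \sigma)\bigr] \bigl(f_h(\sigma^{(v,k)}) - f_h(\sigma)\bigr).
\]
Applying the triangle inequality inside the expectation at $\sigma = Y$ then delivers~\eqref{eq:glauber_generator_1}.

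There is no substantive obstacle: the entire argument is a one-line manipulation once~\eqref{eq:stein_eqn_comparison} and the explicit generator formula~\eqref{eq:glauber_generator} are in hand. The only point that deserves attention is the convergence of the integral~\eqref{eq:stein_eqn_soln} defining $f_h$; in every setting in which we apply the lemma this is guaranteed either by the irreducibility of the Glauber dynamics on the relevant connected state space (for instance, the restriction region $\widetilde{\Omega}(x, r)$ used in Section~\ref{CWP}) or by the contracting property exploited in~\cite{blanca2022mixing}. Since this bound is the jumping-off point for all subsequent estimates — and in particular does not yet invoke contraction of $P$ on the whole state space — it is precisely the form needed to handle the Curie-Weiss-Potts model, where contraction is only available locally near equilibrium macrostates.
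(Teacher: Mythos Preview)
Your proposal is correct and mirrors the paper's approach exactly: the paper states that the lemma follows immediately by substituting the explicit Glauber generator~\eqref{eq:glauber_generator} into the Stein identity~\eqref{eq:stein_eqn_comparison}, which is precisely what you have written out in detail.
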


Furthermore, the following lemma will be used to bound the differences of the solution to the Stein equation $f_h$, which appears in Lemma~\ref{lem:continuous_glauber_generator}, when the chain $P$ is not contracting:

\begin{lemma} \label{lem:glauber_bound_fh_diff}
Let $P$ be the Glauber dynamics on $[q]^V$ with stationary distribution $\mu$. For any $\sigma, \tau \in [q]^V$, let $(X^\sigma_\ell, X^\tau_\ell)_{\ell \in \naturals}$ be any sequence of couplings of the $\ell$-step distributions of $P$ starting from $\sigma$ and $\tau$ respectively. Then for any $h: [q]^V \to \reals$, we have
\[
    |f_h(\sigma) - f_h(\tau)| \leq \norm{L(h)}_\infty \sum_{\substack{\ell \geq 0 \\ v \in V}} \Prob{ X_\ell^\sigma(v) \ne X_\ell^\tau(v) } . 
\]
\end{lemma}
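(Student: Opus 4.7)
The plan is to start from the estimate already derived in the paper in the display preceding equation~\eqref{eq:fh_lipschitz_bound}, namely
\[
    |f_h(\sigma) - f_h(\tau)| \leq \sum_{\ell=0}^\infty \mathbb{E}\bigl| h(X^\sigma_\ell) - h(X^\tau_\ell) \bigr|,
\]
which follows just from reducing the solution $f_h$ from~\eqref{eq:stein_eqn_soln} to the discrete skeleton of the continuous-time chain and using the fact that the jump times have mean one and are independent of the embedded chain. This reduction makes no use of global contractivity; only an arbitrary coupling $(X^\sigma_\ell, X^\tau_\ell)$ of the $\ell$-step distributions is required. Hence the remaining task is purely a pointwise bound: to control $|h(\zeta) - h(\zeta')|$ for two configurations $\zeta, \zeta' \in [q]^V$ by the componentwise Lipschitz constants $L(h)$ defined in~\eqref{eq:componentwise_lipschitz}.

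For this pointwise bound, the idea is a standard telescoping (hybrid) argument along the vertices. Fix any ordering $v_1, \ldots, v_N$ of $V$, and define a sequence of configurations $\zeta = \zeta^{(0)}, \zeta^{(1)}, \ldots, \zeta^{(N)} = \zeta'$ where $\zeta^{(i)}$ coincides with $\zeta'$ on $\{v_1, \ldots, v_i\}$ and with $\zeta$ on the remaining vertices. Consecutive hybrids $\zeta^{(i-1)}$ and $\zeta^{(i)}$ differ at most at the single vertex $v_i$, and they are equal there exactly when $\zeta(v_i) = \zeta'(v_i)$. By the definition of $L_{v_i}(h)$ in~\eqref{eq:componentwise_lipschitz}, this gives
\[
    |h(\zeta) - h(\zeta')|
    \leq \sum_{i=1}^N |h(\zeta^{(i-1)}) - h(\zeta^{(i)})|
    \leq \sum_{v \in V} L_v(h) \, \indicator{\zeta(v) \ne \zeta'(v)}
    \leq \|L(h)\|_\infty \sum_{v \in V} \indicator{\zeta(v) \ne \zeta'(v)} .
\]

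Applying this inequality to $\zeta = X^\sigma_\ell$ and $\zeta' = X^\tau_\ell$ and taking expectations gives
\[
    \mathbb{E}\bigl| h(X^\sigma_\ell) - h(X^\tau_\ell) \bigr|
    \leq \|L(h)\|_\infty \sum_{v \in V} \mathbb{P}\bigl( X^\sigma_\ell(v) \ne X^\tau_\ell(v) \bigr),
\]
and summing over $\ell \geq 0$ combined with the opening estimate yields the claimed bound. There is no real obstacle: the lemma is essentially a bookkeeping statement that isolates the role of global contractivity (which was used in~\eqref{eq:fh_lipschitz_bound} to sum a geometric series in $\kappa$) and replaces it with flexibility in the coupling, so the same machinery applies later in Section~\ref{CWP} where the restricted chain is contracting only locally and more bespoke couplings of the $\ell$-step distributions must be constructed.
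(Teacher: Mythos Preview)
Your proof is correct and essentially the same as the paper's. The paper continues from the full display~\eqref{eq:fh_lipschitz_bound} and bounds $d_W(P^\ell(\sigma,\cdot),P^\ell(\tau,\cdot))$ above by $\mathbb{E}\,d_H(X^\sigma_\ell,X^\tau_\ell)=\sum_{v}\mathbb{P}(X^\sigma_\ell(v)\ne X^\tau_\ell(v))$ for any coupling, together with the already-stated fact that $\|L(h)\|_\infty$ is the Lipschitz constant of $h$ for $d_H$; your telescoping argument simply reproves that Lipschitz fact and applies it directly to $\mathbb{E}|h(X^\sigma_\ell)-h(X^\tau_\ell)|$, bypassing the Wasserstein detour.
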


\begin{proof}
By definition of the Wasserstein distance~\eqref{eq:wasserstein_defn}, $d_W(P^\ell(\sigma, \cdot), P^\ell(\tau, \cdot))$ can be upper bounded by $\mathbb{E} d_H(X^\sigma_\ell, X^\tau_\ell) = \sum_{v \in V} \Prob{ X_\ell^\sigma(v) \ne X_\ell^\tau(v) }$, where $(X^\sigma_\ell, X^\tau_\ell)$ is any coupling of the $\ell$-step distributions of $P$ starting from $\sigma$ and $\tau$. 
Therefore, since $h: [q]^V \to \reals$ is $\norm{L(h)}_\infty$-Lipschitz with respect to $d_H$, we obtain the claimed bound by continuing from~\eqref{eq:fh_lipschitz_bound}.
\end{proof}

\section{Potts model on general bounded-degree graphs} \label{sec:potts_bounded_degree}

In this section, we prove Theorem~\ref{thm:potts_approx_result} for the Potts model on a graph $G = (V, E)$ with $N$ vertices and maximum degree $\Delta$. First, the following lemma establishes a condition for when the Glauber dynamics for the Potts model is contracting. The proof of this result is essentially embedded in~\cite{ullrich2014mixing}, but we will describe it for completeness.

\begin{lemma} \label{lem:potts_bounded_degree_contracting}
Let $P$ be the Glauber dynamics for the Potts model with inverse temperature $\beta$ on a graph $G = (V, E)$ with $N$ vertices and maximum degree $\Delta$. If $\Delta \tanh(\beta / N) < 1$, then for all $\sigma, \tau \in [q]^V$, there exists a coupling $(X^\sigma_1, X^\tau_1)$ of the one-step distributions of $P$ starting from $\sigma, \tau$ such that 
\begin{equation} \label{eq:lem_potts_bounded_degree_contracting_1}
    \ev{d_H(X^\sigma_1, X^\tau_1)} \leq \left( 1 - \frac{1 - \Delta \tanh(\beta / N)}{N} \right) d_H(\sigma, \tau).
\end{equation}
That is, $P$ is contracting with respect to the Hamming distance $d_H$ according to~\eqref{eq:defn_contracting} with rate $\kappa = 1 - N^{-1} (1 - \Delta \tanh(\beta / N))$.
\end{lemma}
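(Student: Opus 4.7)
The plan is to apply the path coupling method of Bubley and Dyer, which reduces the verification of~\eqref{eq:lem_potts_bounded_degree_contracting_1} for arbitrary pairs to the special case in which $\sigma$ and $\tau$ differ at a single vertex. Thus, I would first assume that $d_H(\sigma, \tau) = 1$ and that $\sigma, \tau$ agree everywhere except at some vertex $w \in V$, and construct the natural coupling of one Glauber step in which both chains select the same random vertex $v \in V$.

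The analysis then splits into three cases depending on $v$. First, if $v = w$ (which occurs with probability $1/N$), then the conditional spin distributions $\mu_v(\cdot \mid \sigma)$ and $\mu_v(\cdot \mid \tau)$ coincide, since they only depend on the colours of the neighbours of $v$, which are unchanged between $\sigma$ and $\tau$. I couple so that both chains pick the same new colour, eliminating the disagreement. Second, if $v \notin \mathcal{N}_w \cup \{w\}$, then again $w$ is not a neighbour of $v$, so the conditional distributions coincide and I couple perfectly, leaving $d_H$ unchanged at $1$. Third, if $v \in \mathcal{N}_w$, which happens with probability $\deg(w)/N \leq \Delta/N$, then the conditional distributions at $v$ under $\sigma$ and $\tau$ can differ because the vector $S_v$ changes in two of its coordinates by $\pm 1/N$. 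Using the optimal coupling, the probability that the new colours disagree at $v$ equals $T_v = \lVert \mu_v(\cdot\mid \sigma) - \mu_v(\cdot\mid \tau)\rVert_{\mathrm{TV}}$, so the Hamming distance becomes $2$ with probability $T_v$ and $1$ with probability $1 - T_v$.

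The key technical step is to establish the sharp bound $T_v \leq \tanh(\beta/N)$ on the total variation distance between the two softmax conditional distributions $g_\beta(S_v(\sigma))$ and $g_\beta(S_v(\tau))$ when $S_v(\sigma)$ and $S_v(\tau)$ differ by $+1/N$ in one coordinate and $-1/N$ in another. I expect this to be the main obstacle, as it requires a careful direct calculation that exploits the structure of $g_\beta$; this estimate is presumably the content of the deferred Appendix~\ref{app:lem_potts_glauber_contracting_bound}. Granting this, combining the three cases yields
\begin{equation*}
    \E[d_H(X_1^\sigma, X_1^\tau)] \leq \frac{N - 1 - \deg(w)}{N} + \frac{\deg(w)(1 + \tanh(\beta/N))}{N} \leq 1 - \frac{1 - \Delta\tanh(\beta/N)}{N},
\end{equation*}
which is exactly the desired contraction rate for neighbouring configurations.

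Finally, I would extend the single-vertex bound to arbitrary pairs $\sigma, \tau$ with $d_H(\sigma, \tau) = m$ via the standard path coupling lemma: choose a path $\sigma = \eta_0, \eta_1, \dots, \eta_m = \tau$ in the Hamming graph with $d_H(\eta_{i-1}, \eta_i) = 1$, couple each consecutive pair as above, and concatenate to obtain a coupling $(X_1^\sigma, X_1^\tau)$ satisfying $\E[d_H(X_1^\sigma, X_1^\tau)] \leq \sum_{i=1}^m \E[d_H(X_1^{\eta_{i-1}}, X_1^{\eta_i})] \leq (1 - (1 - \Delta\tanh(\beta/N))/N)\, d_H(\sigma,\tau)$, completing the proof.
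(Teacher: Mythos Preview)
Your proposal is correct and follows essentially the same approach as the paper: a path-coupling reduction to neighbouring configurations, the natural same-vertex coupling with the three-case analysis, and the key technical bound $T_v \leq \tanh(\beta/N)$, which is indeed the content of Lemma~\ref{lem:potts_glauber_contracting_bound} (proved in Appendix~\ref{app:lem_potts_glauber_contracting_bound}). The paper cites~\cite{BordewichGreenhillPatel14} for the coupling calculation you spell out, but the argument is the same.
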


\begin{proof}
Recall from~\eqref{eq:cond_spin_dist} that $\mu_v(\cdot \mid \sigma)$ is the conditional spin distribution of $\mu$ at $v$, given $\sigma$.
It is shown in the proof of~\cite[Theorem~2.13]{ullrich2014mixing}\footnote{Note that the inverse temperature $\beta$ in~\cite{ullrich2014mixing} corresponds to $2 \beta / N$ under our scaling for the Gibbs measure.} that for all $u, v \in V$, the so-called influence of $u$ on $v$ for the chain $P$ satisfies
\begin{equation} \label{eq:lem_potts_bounded_degree_contracting_2}
    \widehat{R}_{u, v} := \max_{\sigma(w) = \tau(w) \,\forall w \ne u} \norm{\mu_v(\cdot \mid \sigma) - \mu_v(\cdot \mid \tau)}_{\mathrm{TV}} \leq \tanh\left( \frac{\beta}{N} \right) \indicator{(u, v) \in E}.
\end{equation}
Here, the maximum is taken over all pairs $\sigma, \tau \in [q]^V$ that only possibly differ at $u$.
The rest of the proof follows from a standard path coupling calculation~\cite{dyer2009spin, BordewichGreenhillPatel14} for which it suffices to show that~\eqref{eq:lem_potts_bounded_degree_contracting_1} holds for all $\sigma, \tau \in [q]^V$ with $d_H(\sigma, \tau) = 1$.
Given a pair $\sigma, \tau \in [q]^V$ that only differs at a single vertex, say $u$, let $(X^\sigma_1, X^\tau_1)$ be a coupling of the one-step distributions of $P$ starting from $\sigma$ and $\tau$ which samples the same vertex $v$ uniformly at random and minimises the probability that the selected colour differs.
Observe that the Hamming distance between the two chains decreases by one if $v = u$ with probability $1 / N$, and increases by one if the sampled colour differs, which occurs with probability at most $\widehat{R}_{u, v} \leq \tanh(\beta / N)$ by~\eqref{eq:lem_potts_bounded_degree_contracting_2}. Thus, $\ev{d_H(X^\sigma_1, X^\tau_1)} \leq 1 - N^{-1} (1 - \Delta \tanh(\beta / N))$.
\end{proof}

\begin{remark}
By using a standard coupling argument, Lemma~\ref{lem:potts_bounded_degree_contracting} implies that if $\Delta \tanh(\beta / N) < 1$ (or more simply $\beta < N / \Delta$), then the Glauber dynamics for the Potts model mixes rapidly with
\[
    t_{\mathrm{mix}}(\epsilon) \leq \frac{N \log (N \epsilon^{-1})}{1 - \Delta \tanh(\beta / N)}.
\]
This is the best known range of $\beta$ for rapid mixing of the Glauber dynamics when $q$ is fixed and $N$ is large. In the regime where $q$ is large, it has been shown in~\cite{BordewichGreenhillPatel14, blanca2022mixing} that the Glauber dynamics also has optimal $O(N \log(N))$ mixing time if $\beta < \frac{1}{\Delta} \log\left( \frac{q-1}{\Delta} \right)$ or $\beta < (1 - o_q(1)) \frac{\log(q)}{\Delta - 1}$ (where $o_q(1) \to 0$ as $q \to \infty$).
\end{remark}

Next, the following result specialises the general approximation result in Theorem~\ref{thm:approx_thm_general_contracting} to the Glauber dynamics on $[q]^V$.
Recall that $T(\sigma) = (T_v(\sigma))_{v \in V}$, defined in~\eqref{eq:condspindist_tv}, denotes the total variation distances between the conditional spin distributions of $\mu$ and $\nu$, given $\sigma$.

\begin{theorem} \label{thm:approx_thm_glauber_contracting}
Let $P$ and $Q$ be the Glauber dynamics on $[q]^V$ with stationary distributions $\mu$ and $\nu$ respectively, and let $X \sim \mu$ and $Y \sim \nu$ be random configurations. If $P$ is contracting with respect to the Hamming distance $d_H$ according to~\eqref{eq:defn_contracting} for some $0 \leq \kappa < 1$, then for any function $h: [q]^V \to \reals$,
\begin{equation} \label{eq:approx_thm_glauber_contracting_1}
    |\mathbb{E} h(X) - \mathbb{E} h(Y)| \leq \frac{\norm{L(h)}_\infty}{N(1 - \kappa)} \mathbb{E} \norm{T(Y)}_1 .
\end{equation}
\end{theorem}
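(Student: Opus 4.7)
The plan is to derive Theorem~\ref{thm:approx_thm_glauber_contracting} as a direct specialisation of Theorem~\ref{thm:approx_thm_general_contracting} to the metric space $(\Sigma, d) = ([q]^V, d_H)$, using two observations specific to the Hamming distance and Glauber dynamics. The core of the argument is to write the expected one-step Wasserstein distance in~Theorem~\ref{thm:approx_thm_general_contracting} in terms of the vector of conditional spin total variation distances $T(\sigma)$ defined in~\eqref{eq:condspindist_tv}.

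First, I would observe that with respect to the Hamming distance $d_H$, the optimal Lipschitz constant coincides with the componentwise maximum, i.e.\ $L_{d_H}(h) = \|L(h)\|_\infty$. This is a short sanity check: any two configurations $\sigma, \tau$ can be interpolated via a path of length $d_H(\sigma,\tau)$ in which exactly one coordinate is changed at each step, so the telescoping sum of single-site differences is bounded by $\|L(h)\|_\infty \cdot d_H(\sigma,\tau)$; conversely, configurations differing at a single site realise a lower bound of $\|L(h)\|_\infty$.

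Second, I would construct an explicit coupling of the one-step distributions $P(\sigma, \cdot)$ and $Q(\sigma, \cdot)$ by selecting the same vertex $v \in V$ uniformly at random, and then sampling new spins from $\mu_v(\cdot \mid \sigma)$ and $\nu_v(\cdot \mid \sigma)$ using an optimal maximal coupling at that vertex, so the probability of disagreement at $v$ is exactly $T_v(\sigma)$. Since the two coupled configurations agree on all vertices other than the selected $v$, the expected Hamming distance after one step is
\begin{equation*}
    \mathbb{E}[d_H(X_1^\sigma, Y_1^\sigma)] = \frac{1}{N} \sum_{v \in V} T_v(\sigma) = \frac{1}{N} \|T(\sigma)\|_1.
\end{equation*}
By the coupling characterisation of Wasserstein distance in~\eqref{eq:wasserstein_defn}, this yields $d_W(P(\sigma, \cdot), Q(\sigma, \cdot)) \leq N^{-1} \|T(\sigma)\|_1$ for every $\sigma \in [q]^V$.

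Finally, I would substitute these two facts into the conclusion of Theorem~\ref{thm:approx_thm_general_contracting} (applied with metric $d_H$ and the given contracting chain $P$) to obtain
\begin{equation*}
    |\mathbb{E} h(X) - \mathbb{E} h(Y)|
    \leq \frac{L_{d_H}(h)}{1 - \kappa} \, \mathbb{E}[d_W(P(Y, \cdot), Q(Y, \cdot))]
    \leq \frac{\|L(h)\|_\infty}{N(1 - \kappa)} \, \mathbb{E}\|T(Y)\|_1,
\end{equation*}
which is precisely~\eqref{eq:approx_thm_glauber_contracting_1}. There is no real obstacle here: the whole statement is essentially a packaging of Theorem~\ref{thm:approx_thm_general_contracting} with the natural coupling of Glauber updates. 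If one instead wished to bypass Theorem~\ref{thm:approx_thm_general_contracting} and argue from Lemma~\ref{lem:continuous_glauber_generator} directly, the mild subtlety would be that crudely bounding $|f_h(Y^{(v,k)}) - f_h(Y)|$ summand-by-summand in $k$ yields a spurious factor of $2$; avoiding it requires writing the inner sum $\sum_k(\mu_v(k\mid Y)-\nu_v(k\mid Y))f_h(Y^{(v,k)})$ as an expectation under the optimal coupling at $v$ before taking absolute values — essentially the same coupling step as above.
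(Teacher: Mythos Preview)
Your proposal is correct and follows essentially the same approach as the paper: both construct the natural one-step coupling of the two Glauber dynamics (same vertex, optimally coupled spins) to bound $d_W(P(\sigma,\cdot),Q(\sigma,\cdot))$ by $N^{-1}\|T(\sigma)\|_1$, identify $L_{d_H}(h)=\|L(h)\|_\infty$, and then plug into Theorem~\ref{thm:approx_thm_general_contracting}. Your additional remark about the factor-of-$2$ pitfall when going through Lemma~\ref{lem:continuous_glauber_generator} directly is a nice aside but not needed for the argument.
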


\begin{proof}
Recall that $h$ is $\norm{L(h)}_\infty$-Lipschitz with respect to $d_H$. By definition of the Wasserstein distance~\eqref{eq:wasserstein_defn}, $d_W(P(\sigma, \cdot), Q(\sigma, \cdot))$ can be upper bounded by $\mathbb{E}\left[ d_H(X^\sigma_1, Y^\sigma_1) \right]$, where $(X^\sigma_1, Y^\sigma_1)$ is the following coupling of the one-step distributions of $P$ and $Q$ starting from any configuration $\sigma \in [q]^V$:
\begin{enumerate}[label=(\arabic*)]
    \item Pick the same vertex $v \in V$ uniformly at random.
    \item Update the spin at $v$ to obtain new configurations $X^\sigma_1$ and $Y^\sigma_1$ according to an optimal coupling of the conditional spin distributions $\mu_v( \cdot \mid \sigma )$ and $\nu_v( \cdot \mid \sigma )$ respectively (i.e.\ which minimises the probability that the selected colour differs).
\end{enumerate}
If $v$ is selected, the probability that $X^\sigma_1(v) \ne Y^\sigma_1(v)$, or equivalently $d_H(X^\sigma_1, Y^\sigma_1) = 1$, is equal to $T_v(\sigma) = \norm{\mu_v( \cdot \mid \sigma) - \nu_v( \cdot \mid \sigma)}_{\mathrm{TV}}$. Thus, from Theorem~\ref{thm:approx_thm_general_contracting}, we have
\[
    |\mathbb{E} h(X) - \mathbb{E} h(Y)|
    \leq \frac{\norm{L(h)}_\infty}{1 - \kappa} \ev{\frac{1}{N} \sum_{v \in V} T_v(Y)} 
    = \frac{\norm{L(h)}_\infty}{N(1 - \kappa)} \mathbb{E} \norm{T(Y)}_1,
\]
as desired.
\end{proof}

Now suppose that $\mu$ is the Potts model and that $\nu$ is a product measure where each spin is independently distributed according to the probabilities $p_v := (p_v^{(k)})_{k \in [q]}$ at each vertex $v \in V$.
Note that for any configuration $\sigma \in [q]^V$, $\nu_v(\cdot \mid \sigma) = p_v$, and from~\eqref{eq:potts_condspindist}, $\mu_v(\cdot \mid \sigma) = g_\beta(S_v(\sigma))$, where $g_\beta$ is the softmax function from~\eqref{updatingprob} and $S_v(\sigma)$ indicates the (scaled) proportions of neighbours of $v$ with each colour from~\eqref{general_magnetization_vector}.
From Theorem~\ref{thm:approx_thm_glauber_contracting}, $Y \sim \nu$ offers a good approximation of the Potts model if $\E \norm{T(Y)}_1$ is small. Hence, we seek probabilities that satisfy $p_v^{(k)} \approx \mu_v(k \mid Y)$ to match the transition probabilities of the corresponding Glauber dynamics---a natural choice is to solve the system of equations
\begin{equation} \label{eq:potts_meanfield}
    p_v^{(k)} = g_\beta^{(k)}(\mathbb{E}[S_v(Y)]) \quad \text{for all} \quad k \in [q],\, v \in V.
\end{equation}
The condition~\eqref{eq:potts_meanfield} is a mean-field approximation and generalises a similar condition for the Ising model in~\cite[Equation~(1.2)]{reinert2019approximating}. An important observation that we will use is that setting $p^{(k)}_v = 1 / q$ for all $k \in [q]$, $v \in V$ (i.e.\ $\nu$ consists of i.i.d.\ uniform spins) always produces a solution to~\eqref{eq:potts_meanfield}.

By applying Theorem~\ref{thm:approx_thm_glauber_contracting}, we establish the following approximation result for comparing the Potts model with a product measure $\nu$ that satisfies the mean-field condition~\eqref{eq:potts_meanfield}.

\begin{lemma} \label{lem:potts_approx_meanfield}
Let $X \sim \mu$ be distributed according to the Potts model with inverse temperature $\beta$ on a graph $G = (V, E)$ with $N$ vertices, and $Y \sim \nu$ be distributed according to a product measure $(p_v)_{v \in V}$ on $[q]^V$ that satisfies~\eqref{eq:potts_meanfield}. If the Glauber dynamics for $X$ is contracting with respect to the Hamming distance according to~\eqref{eq:defn_contracting} for some $0 \leq \kappa < 1$, then for any $h: [q]^V \to \reals$,
\[
    | \mathbb{E} h(X) - \mathbb{E} h(Y) |
    \leq \norm{L(h)}_\infty \frac{\beta \sqrt{q}}{N(1 - \kappa)} \sum_{v \in V} \E \norm{S_v(Y) - \ev{S_v(Y)}}_2.
\]
\end{lemma}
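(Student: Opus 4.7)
The plan is to invoke Theorem~\ref{thm:approx_thm_glauber_contracting} to reduce the claim to an estimate on $\E\norm{T(Y)}_1 = \sum_{v \in V} \E T_v(Y)$, and then to exploit the mean-field identity~\eqref{eq:potts_meanfield} to express $T_v(Y)$ as a local Lipschitz deviation of the softmax $g_\beta$.

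First I would observe that since $\mu$ is the Potts model, $\mu_v(\cdot \mid Y) = g_\beta(S_v(Y))$ by~\eqref{eq:potts_condspindist}, while $\nu_v(\cdot \mid Y) = p_v$ because $\nu$ is a product measure. The defining condition~\eqref{eq:potts_meanfield} rewrites $p_v = g_\beta(\ev{S_v(Y)})$, so
\[
    T_v(Y) = \norm{g_\beta(S_v(Y)) - g_\beta(\ev{S_v(Y)})}_{\mathrm{TV}}.
\]
This is the clean reformulation that turns the proof into a question of how smoothly $g_\beta$ depends on its argument.

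The main technical step, and the place where the factor $\beta\sqrt{q}$ enters, is a Lipschitz estimate for $g_\beta$ viewed as a map $(\reals^q, \norm{\cdot}_2) \to (\mathcal{S}, \norm{\cdot}_{\mathrm{TV}})$. I would compute the Jacobian entries $\partial_j g_\beta^k(s) = 2\beta\, g_\beta^k(s)\bigl(\delta_{jk} - g_\beta^j(s)\bigr)$, and then bound
\[
    \norm{g_\beta(s) - g_\beta(t)}_{\mathrm{TV}}
    = \tfrac{1}{2}\sum_{k \in [q]}|g_\beta^k(s) - g_\beta^k(t)|
    \leq \tfrac{1}{2}\norm{s - t}_2 \sup_{u \in [s,t]}\sum_{k \in [q]}\norm{\nabla g_\beta^k(u)}_2,
\]
using the fundamental theorem of calculus along the segment from $t$ to $s$ together with Cauchy--Schwarz componentwise. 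Another application of Cauchy--Schwarz in $k$ gives $\sum_k \norm{\nabla g_\beta^k(u)}_2 \leq \sqrt{q}\bigl(\sum_k \norm{\nabla g_\beta^k(u)}_2^2\bigr)^{1/2}$, and a short direct calculation with the probability vector $p := g_\beta(u)$ shows
\[
    \sum_k \norm{\nabla g_\beta^k(u)}_2^2 = 4\beta^2 \Bigl( \norm{p}_2^2 - 2\textstyle\sum_k p_k^3 + \norm{p}_2^4 \Bigr) \leq 4\beta^2,
\]
where the bound follows from $\sum_k p_k^3 \geq \norm{p}_2^4$ (Cauchy--Schwarz with weights $p_k$) and $\norm{p}_2^2 \leq 1$. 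This yields the clean bound $\norm{g_\beta(s) - g_\beta(t)}_{\mathrm{TV}} \leq \beta\sqrt{q}\,\norm{s-t}_2$ (up to an absolute constant that I expect to match the statement once the TV/$\ell_1$ constant is handled carefully).

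Applying this Lipschitz estimate pointwise to $s = S_v(Y)$ and $t = \ev{S_v(Y)}$ and taking expectations yields $\E T_v(Y) \leq \beta\sqrt{q}\, \E\norm{S_v(Y) - \ev{S_v(Y)}}_2$. Plugging this into the bound from Theorem~\ref{thm:approx_thm_glauber_contracting} and summing over $v \in V$ gives exactly the claimed inequality. The expected main obstacle is the softmax Lipschitz constant itself; once the identity $\partial_j g_\beta^k = 2\beta g_\beta^k(\delta_{jk} - g_\beta^j)$ is available, everything else reduces to careful bookkeeping with Cauchy--Schwarz and symmetric functions of the probability vector $g_\beta(u)$.
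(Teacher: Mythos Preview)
Your proposal is correct and follows essentially the same route as the paper: reduce via Theorem~\ref{thm:approx_thm_glauber_contracting}, rewrite $T_v(Y)$ using the mean-field identity~\eqref{eq:potts_meanfield} as a TV difference of softmax values, and then bound by a Lipschitz estimate for $g_\beta$. The only difference is that the paper obtains the Lipschitz bound in one line by combining $\norm{\cdot}_1 \le \sqrt{q}\,\norm{\cdot}_2$ with the known fact that $g_\beta$ is $2\beta$-Lipschitz in $\ell_2$ (citing~\cite{GaoPavel2018}), whereas you rederive this via the Jacobian; your hedge about the constant is unnecessary---your chain of inequalities gives exactly $\norm{g_\beta(s)-g_\beta(t)}_{\mathrm{TV}} \le \beta\sqrt{q}\,\norm{s-t}_2$, matching the statement.
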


\begin{proof}
The bound follows from Theorem~\ref{thm:approx_thm_glauber_contracting} once we show that
\begin{equation} \label{eq:potts_approx_meanfield_pf1}
    \E \norm{T(Y)}_1 \leq \beta \sqrt{q} \sum_{v \in V} \E \norm{S_v(Y) - \E S_v(Y)}_2,
\end{equation}
where $T(Y) = (T_v(Y))_{v \in V}$ with $T_v(Y) = \norm{\mu_v(\cdot \mid Y) - \nu_v(\cdot \mid Y)}_{\mathrm{TV}}$. Fix $v \in V$. 
Since the conditional spin distribution of $P$ is given by $\mu_v(\cdot \mid \sigma) = g_\beta(S_v(\sigma))$ from~\eqref{eq:potts_condspindist}, and $\nu_v(k \mid \sigma) = p_v^{(k)}$ for the random configuration $Y$ with independent spins that satisfy~\eqref{eq:potts_meanfield}, we have
\[
    \mu_v( k \mid Y ) - \nu_v( k \mid Y ) = g_\beta(S_v(Y)) - g_\beta(\ev{S_v(Y)}) , \quad k \in [q] .
\]
By using the equivalence of the $\ell_1$ and $\ell_2$ norms in $\mathbb{R}^q$, and the fact that the softmax function $g_\beta$ is $2 \beta$-Lipschitz with respect to the $\ell_2$ norm~\cite[Proposition 4]{GaoPavel2018}, we deduce that
\[
\begin{aligned}
    \norm{ g_\beta(S_v(Y)) - g_\beta(\ev{S_v(Y)}) }_1 
    &\leq \sqrt{q} \norm{ g_\beta(S_v(Y)) - g_\beta(\ev{S_v(Y)}) }_2 \\
    &\leq 2 \beta \sqrt{q} \norm{ S_v(Y) - \ev{S_v(Y)} }_2 .
\end{aligned}
\]
Combining the previous displayed equations implies that
\[
\begin{aligned}
    T_v(Y) = \frac{1}{2} \norm{ g_\beta(S_v(Y)) - g_\beta(\ev{S_v(Y)}) }_1
    \leq \beta \sqrt{q} \norm{ S_v(Y) - \ev{S_v(Y)} }_2.
\end{aligned}
\]
Hence, by summing over all the vertices, we deduce that~\eqref{eq:potts_approx_meanfield_pf1} holds, as desired.
\end{proof}

By combining Lemmas~\ref{lem:potts_bounded_degree_contracting} and~\ref{lem:potts_approx_meanfield}, we can now prove Theorem~\ref{thm:potts_approx_result}.

\begin{proof}[Proof of Theorem~\ref{thm:potts_approx_result}]
By Lemma~\ref{lem:potts_bounded_degree_contracting}, the Glauber dynamics for the Potts model is contracting with $\kappa = 1 - N^{-1} (1 - \Delta \tanh(\beta / N))$ whenever $\Delta \tanh(\beta / N) < 1$. Hence, we may apply Lemma~\ref{lem:potts_approx_meanfield}, where each spin of the random configuration $Y$ is independently sampled from the uniform distribution $p = (p^{(k)})_{k \in [q]}$ with $p^{(k)} = 1/q$ for all $k$, which satisfies~\eqref{eq:potts_meanfield}.
Thus, it suffices to bound $\sum_{v \in V} \mathbb{E} \norm{ S_v(Y) - \ev{S_v(Y)} }_2$. Fix $v \in V$, and denote its degree by $\mathrm{deg}(v)$. Recall from~\eqref{general_magnetization_vector} that
\[
    S_v(Y) = \left( \frac{1}{N} \sum_{u \in \mathcal{N}_v} \indicator{ Y(u) = k } \right)_{k \in [q]} \in \reals^q .
\]
Since the colours of the neighbours of $v$ are independently distributed according to $p$, we have $N S_v(Y) \sim \mathrm{Multinomial}(\mathrm{deg}(v), p)$. Thus, $\mathrm{Var}(N S^{(k)}_v(Y)) = \mathrm{deg}(v) p^{(k)} (1 - p^{(k)})$, and
\[
    \E \left( S^{(k)}_v(Y) - \ev{S^{(k)}_v(Y)} \right)^2
    = \frac{1}{N^2} \mathrm{Var}(N S^{(k)}_v(Y))
    = \frac{\mathrm{deg}(v)}{N^2} p^{(k)} (1 - p^{(k)})
\]
for all $k \in [q]$. Consequently, by Jensen's inequality,
\begin{equation} \label{eq:potts_approx_result_pf1}
    \mathbb{E} \norm{S_v(Y) - \ev{S_v(Y)}}_2
    \leq \sqrt{\sum_{k \in [q]} \E \left( S_v^{(k)}(Y) - \ev{S_v^{(k)}(Y)} \right)^2}
    = \frac{\sqrt{\mathrm{deg}(v)}}{N} \sqrt{\sum_{k \in [q]} p^{(k)} (1 - p^{(k)})} .
\end{equation}

Since $p^{(k)} = 1/q$ for all $k$, $\sum_{k \in [q]} p^{(k)} (1 - p^{(k)}) = (q - 1) / q$. Therefore, applying Lemma~\ref{lem:potts_approx_meanfield} shows that
\begin{equation} \label{eq:potts_approx_result_pf2}
    | \mathbb{E} h(X) - \mathbb{E} h(Y) |
    \leq \norm{L(h)}_\infty \frac{\beta \sqrt{q-1}}{1 - \Delta \tanh(\beta / N)} \frac{1}{N} \sum_{v \in V} \sqrt{\mathrm{deg}(v)}.
\end{equation}
In particular, by using Jensen's inequality and the handshaking lemma, we have
\begin{equation} \label{eq:potts_approx_result_pf3}
    \frac{1}{N} \sum_{v \in V} \sqrt{\mathrm{deg}(v)} \leq \sqrt{\frac{1}{N} \sum_{v \in V} \mathrm{deg}(v)} = \sqrt{\frac{2|E|}{N}}.
\end{equation}
Combining~\eqref{eq:potts_approx_result_pf2} and~\eqref{eq:potts_approx_result_pf3} completes the proof of the theorem.
\end{proof}

\section{The Curie--Weiss--Potts model} \label{CWP}

In this section, we focus on the Curie--Weiss--Potts model on the complete graph with $N$ vertices. The analysis of the vector of proportions $S(\sigma)$, which lives in the probability simplex $\mathcal{S} = \{ x \in \reals^q_+ : \norm{x}_1 = 1 \}$, will play a crucial part for understanding the dynamics of the system. By convention, we will consider vectors $s \in \reals^q$ as row vectors.
In a continuation of the discussion in Section~\ref{sec:intro_cwp}, we begin by summarising known results about the equilibrium macrostates.

\begin{theorem}[{\cite[Theorem 2.1]{ellis1990limit}}\footnote{Note that the inverse temperature $\beta$ in~\cite{ellis1990limit} corresponds to $2 \beta$ under our scaling for the Gibbs measure.}] \label{pointofconcentration}
For $\beta > 0$, let $s_{\beta,q} \in \mathbb{R}$ be the largest solution of the equation 
\begin{equation} \label{pointofconcentration_eq1}
    s = \frac{1 - e^{-2\beta s}}{1 + (q-1) e^{-2\beta s}}.
\end{equation}
Then, the quantity $s_{\beta,q}$ is well-defined.
On the interval $[\beta_c,\infty)$, it is positive, strictly increasing and differentiable, where
\begin{equation*}
    \beta_{c} \equiv \beta_{c}(q) := \frac{q - 1}{q - 2}\log(q - 1)
\end{equation*}
satisfies $1 < \beta_c(q) < q/2$ for all $q \geq 3$,  and $\beta_c(2) = 1$. Moreover, one has $s_{\beta_c,q} = (q - 2) / (q - 1)$ and $\lim_{\beta\uparrow\infty}s_{\beta,q} = 1$ for any $q \geq 2$.

Define 
\begin{equation*}
    \check{s}_{\beta,q} := \left( \frac{1 + (q-1) s_{\beta,q}}{q}, \frac{1 - s_{\beta,q}}{q}, \ldots, \frac{1 - s_{\beta,q}}{q} \right) \in \mathcal{S}.
\end{equation*}
Let $\mathbf{T}^k : \reals^q \to \reals^q$ denote the operator which interchanges the $1$st and $k$th coordinates of a vector. Then, for $\beta = \beta_c$, we have 
\begin{equation*}
    \check{s}_{\beta_c,q} = \left( 1 - \frac{1}{q}, \frac{1}{q(q-1)}, \ldots, \frac{1}{q(q-1)} \right).
\end{equation*}
Furthermore, the set of minimisers of the function $G_{\beta}$ defined in~\eqref{eq:G_beta} is given by
\begin{align*}
    \mathfrak{S}_{\beta,q} :=
    \begin{cases}
        \{ \hat{e} \}, \quad &\text{if}\quad \beta < \beta_{c},\\
        \{ \hat{e}, \mathbf{T}^1\check{s}_{\beta_c,q}, \mathbf{T}^2\check{s}_{\beta_c,q}, \ldots, \mathbf{T}^q\check{s}_{\beta_c,q} \}, \quad &\text{if}\quad \beta = \beta_{c},\\
        \{ \mathbf{T}^1\check{s}_{\beta,q}, \ldots, \mathbf{T}^q\check{s}_{\beta,q} \}, \quad &\text{if}\quad \beta > \beta_{c},\\
    \end{cases}
\end{align*}
where $\hat{e} := (1/q, \ldots, 1/q) \in \mathcal{S}$ is the equiproportionality vector. For $\beta \geq \beta_c$, the points in $\mathfrak{S}_{\beta,q}$ are all distinct.
\end{theorem}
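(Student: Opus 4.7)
The proof plan is to exploit the $S_q$-symmetry of $G_\beta$ under coordinate permutations to reduce the classification of critical points to a scalar fixed-point analysis. First, I would differentiate~\eqref{eq:G_beta} to obtain the stationarity condition; after simplification, each coordinate $s^i$ of a critical point satisfies a scalar equation of the form $s^i = e^{2\beta s^i}/Z(s)$ with $Z(s)$ a common normalising constant, so that every critical point automatically lies in the probability simplex $\mathcal{S}$. Since the map $x \mapsto e^{2\beta x}/Z - x$ is strictly convex and therefore has at most two zeros, the coordinates of any critical point take at most two distinct values.

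Next, by $S_q$-symmetry, every critical point is, up to relabelling, of the form with $k$ coordinates equal to $\alpha$ and $q-k$ coordinates equal to $\gamma$ for some $k \in \{1, \ldots, q-1\}$ (the degenerate cases $k = 0, q$ give $\hat{e}$). Imposing the simplex constraint $k\alpha + (q-k)\gamma = 1$ and writing $s := \alpha - \gamma$ reduces the two-coordinate stationarity equations to a single scalar equation; the case $k = 1$ gives exactly~\eqref{pointofconcentration_eq1}. I would then analyse the map $\Phi_\beta(s) := \frac{1 - e^{-2\beta s}}{1 + (q-1)e^{-2\beta s}}$ by elementary calculus: $\Phi_\beta(0) = 0$, $\Phi_\beta'(0) = 2\beta/q$, $\Phi_\beta(s) \to 1$ as $s \to \infty$, and a sign analysis of $\Phi_\beta''$ shows that $\Phi_\beta - \mathrm{id}$ has at most one local maximum on $(0,1)$. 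Consequently,~\eqref{pointofconcentration_eq1} admits at most two positive solutions, and a standard continuity argument produces a largest positive root $s_{\beta,q}$ that is continuous and strictly increasing once $\beta$ exceeds a threshold.

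Finally, I would compare the values of $G_\beta$ at $\hat{e}$ and at the non-symmetric critical points $\mathbf{T}^k \check{s}_{\beta,q}$ to identify the global minimisers and pin down $\beta_c$. A calculation along the non-symmetric branch, using~\eqref{pointofconcentration_eq1} to eliminate the normaliser $Z$, reduces the comparison of $G_\beta$-values to a one-variable inequality in $s_{\beta,q}$. Equating $G_\beta(\hat{e}) = G_\beta(\mathbf{T}^1 \check{s}_{\beta,q})$ and solving simultaneously with~\eqref{pointofconcentration_eq1} yields the closed-form values $s_{\beta_c,q} = (q-2)/(q-1)$ and $\beta_c = \frac{(q-1)\log(q-1)}{q-2}$. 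Differentiability and strict monotonicity of $s_{\beta,q}$ on $[\beta_c, \infty)$ then follow from the implicit function theorem applied to~\eqref{pointofconcentration_eq1}, while the bounds $1 < \beta_c < q/2$ for $q \geq 3$ and the limit $s_{\beta,q} \to 1$ as $\beta \to \infty$ reduce to elementary estimates.

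The main obstacle is showing that configurations with $k \geq 2$ coordinates equal to $\alpha$ never give global minimisers (so only $\mathbf{T}^k \check{s}_{\beta,q}$ with a single dominant coordinate matters), and simultaneously characterising $\beta_c$ as the energy-matching threshold: this requires a careful branch-by-branch comparison of $G_\beta$-values along all families of critical points produced by step two, and is where the first-order nature of the phase transition (the fact that $s_{\beta_c,q} = (q-2)/(q-1) > 0$ rather than $0$) becomes essential, since it forces the bifurcation to occur through a tangential intersection of $y = \Phi_\beta(s)$ with the diagonal rather than through a pitchfork at the origin.
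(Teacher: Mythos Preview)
The paper does not prove this theorem: it is quoted verbatim from \cite[Theorem~2.1]{ellis1990limit} and used as a black box, so there is no proof in the paper to compare your proposal against.

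That said, your outline is essentially the classical argument used in the original reference: reduce via the $S_q$-symmetry and the two-value structure of critical points to a one-parameter family, analyse the scalar fixed-point equation~\eqref{pointofconcentration_eq1} by elementary calculus, and identify $\beta_c$ by matching free-energy values between $\hat{e}$ and the asymmetric branch. Your identification of the main obstacle (ruling out critical points with $k \geq 2$ dominant coordinates as global minimisers) is correct; in the original proof this is handled by a direct comparison of the free-energy values along the different branches, which is where most of the computational work lies. Nothing in your plan is wrong, but since the present paper simply cites the result, no proof is expected here.
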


The expression for the critical value $\beta_c$ for the Curie--Weiss--Potts model was first obtained in~\cite{wu1982potts}. For simplicity of notation, we define
\begin{equation}\label{s^*}
    s^*_{\beta,q} := \check{s}^{(1)}_{\beta,q} = \frac{1+(q-1)s_{\beta,q}}{q},
\end{equation}
where $s_{\beta,q}$ is the largest solution of the equation~\eqref{pointofconcentration_eq1} in Theorem~\ref{pointofconcentration}, and so
\begin{equation} \label{T1}
    \mathbf{T}^1\check{s}_{\beta,q} = \check{s}_{\beta,q} = \left( s^*_{\beta,q}, \frac{1-s^*_{\beta,q}}{q-1}, \ldots, \frac{1 - s^*_{\beta,q}}{q-1} \right).
\end{equation}

Next, we describe some key properties of the softmax function $g_\beta$, defined in~\eqref{updatingprobvector}, which will be important for our analysis. Observe that the gradient of the function $G_{\beta}(s)$ defined in~\eqref{eq:G_beta} is equal to $2 \beta (s - g_\beta(s))$. Therefore, the points in $\mathfrak{S}_{\beta,q}$ solve the following fixed point equation: 

\begin{lemma}[Mean-field equation] \label{g(x)=x}
For all $x \in \mathfrak{S}_{\beta,q}$, we have $g_{\beta}(x) = x$.
\end{lemma}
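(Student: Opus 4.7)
The plan is to invoke the first-order optimality condition for minimisers of $G_\beta$ on $\mathbb{R}^q$ and combine it with the explicit gradient formula that is already announced in the paragraph preceding the lemma.

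First I would note that $\mathfrak{S}_{\beta,q}$ is defined as the set of \emph{unconstrained} global minimisers of $G_\beta : \mathbb{R}^q \to \mathbb{R}$ (see~\eqref{eq:G_beta_minimisers}), and that $G_\beta$ is smooth on all of $\mathbb{R}^q$. Hence any $x \in \mathfrak{S}_{\beta,q}$ is an interior critical point, and the first-order necessary condition gives $\nabla G_\beta(x) = 0$. The fact, stated in Theorem~\ref{pointofconcentration}, that these minimisers happen to lie in the probability simplex $\mathcal{S}$ is not needed for the argument; what matters is only that the minimisation is taken over an open set so that Lagrange multipliers do not intervene.

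Next I would verify the gradient identity $\nabla G_\beta(s) = 2\beta\bigl(s - g_\beta(s)\bigr)$ componentwise by direct differentiation of~\eqref{eq:G_beta}. The quadratic term contributes $2\beta s^k$, and the log-sum-exp term contributes, by the chain rule, a factor proportional to the softmax weights from~\eqref{updatingprob}, namely $-2\beta\, e^{2\beta s^k} / \sum_{j} e^{2\beta s^j} = -2\beta\, g_\beta^k(s)$. Assembling the $q$ coordinates yields the claimed formula. This computation is entirely routine; the only subtlety is keeping the signs in the exponent and the log consistent with the definition of $g_\beta$ in~\eqref{updatingprob}, which is a trivial bookkeeping check rather than a genuine obstacle.

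Finally, combining the two ingredients, $\nabla G_\beta(x) = 0$ and $\nabla G_\beta(x) = 2\beta(x - g_\beta(x))$, and dividing through by $2\beta > 0$, one obtains $x = g_\beta(x)$ for every $x \in \mathfrak{S}_{\beta,q}$. There is no real obstacle in this proof; the lemma is essentially a restatement of the critical-point characterisation. The slight care required is only to confirm that $g_\beta$ as defined in~\eqref{updatingprobvector}--\eqref{updatingprob} is precisely the softmax map that appears from differentiating the free-energy term of $G_\beta$, so that the mean-field fixed-point equation and the Euler--Lagrange equation for $G_\beta$ coincide.
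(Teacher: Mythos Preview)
Your proposal is correct and follows exactly the paper's own argument: the paper justifies the lemma in the sentence immediately preceding it by noting that $\nabla G_\beta(s) = 2\beta(s - g_\beta(s))$, so minimisers of $G_\beta$ satisfy the fixed-point equation. Your write-up simply spells out the two ingredients (first-order optimality on an open domain, plus the componentwise gradient computation) that the paper compresses into one line; the only caveat is that the sign in the exponent of~\eqref{eq:G_beta} as printed does not literally match~\eqref{updatingprob}, so the ``trivial bookkeeping check'' you mention actually reveals a typo in the paper's definition of $G_\beta$ rather than confirming it.
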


Lemma~\ref{g(x)=x} essentially corresponds to the mean-field condition~\eqref{eq:potts_meanfield} that previously appeared when approximating the Potts model on a general graph with a sequence of independent spins. This explains why $x \in \mathfrak{S}_{\beta, q}$ is a natural choice for the distribution of each spin of the underlying i.i.d.\ model $Y$ in Theorems~\ref{WDbound_high} and~\ref{WDbound}.

A crucial step for showing that the Glauber dynamics for the Curie--Weiss--Potts model is contracting around any equilibrium macrostate is to bound the Lipschitz constant of the function $g_{\beta}$ around any point $x \in \mathfrak{S}_{\beta,q}$. The following technical lemma describes a special property of the Jacobian matrix of $g_\beta$ at $x$, which is essential for establishing such a bound.

\begin{lemma}[Jacobian] \label{jacobian}
Let $\mathbf{J}(x)$ be the $q \times q$ Jacobian matrix of $g_\beta$ at $x$, such that the $k$th row of the matrix is given by the row vector $\nabla g_\beta^{(k)}(x)$.
Define the constants $a, a', b > 0$ by
\begin{equation*}
    a := 2\beta q s^*_{\beta,q} \frac{1 - s^*_{\beta,q}}{q-1}, \quad
    a' := 2\beta \frac{1 - s^*_{\beta,q}}{q-1}, \quad
    b := 2\beta \frac{1 - s^*_{\beta,q}}{q - 1} \left( s^*_{\beta,q} - \frac{1 - s^*_{\beta,q}}{q - 1}\right).
\end{equation*}
Then for any $x \in \mathfrak{S}_{\beta, q}$ and any $s_1, s_2 \in \mathcal{S}$, we have that
\[
    \mathbf{J}(x)(s_1 - s_2)^{\top} = \mathbf{A}(x)(s_1 - s_2)^{\top},
\]
where $\mathbf{A}(x)$ is the matrix defined as follows.
When $x = \hat{e}$, we simply have $\mathbf{A}(\hat{e}) := (2 \beta / q) \mathbf{I}$, where $\mathbf{I}$ is the $q \times q$ identity matrix---that is, 
\begin{equation*}
    \nabla g^{(k)}_{\beta}(\hat{e})(s_1 - s_2)^{\top} = \frac{2\beta}{q}(s_1^{(k)} - s_2^{(k)}) \quad \text{for all} \quad k \in [q].
\end{equation*}
When $x = \mathbf{T}^j\check{s}_{\beta,q}$, the entries of $\mathbf{A} \equiv \mathbf{A}(x)$ are given by
\[
    \begin{array}{ll}
    \mathbf{A}_{j, j} := a,  & \\
    \mathbf{A}_{k, k} := a', & k \ne j, \\    
    \mathbf{A}_{k, j} := -b, & k \ne j,
    \end{array}
\]
and all the other entries are zero---that is, 
\[
    \nabla g^{(k)}_{\beta}(\mathbf{T}^j \check{s}_{\beta,q})(s_1 - s_2)^{\top} =
    \begin{cases}
    a(s_1^{(j)} - s_2^{(j)}), &\text{if} \quad k = j,\\
    a'(s_1^{(k)} - s_2^{(k)}) - b(s_1^{(j)} - s_2^{(j)}), &\text{if} \quad k \ne j.
    \end{cases}
\]
\end{lemma}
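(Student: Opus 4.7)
The plan is a direct computation from the definition of $g_\beta$, with the key simplification coming from the simplex constraint $\sum_k(s_1^k - s_2^k) = 0$. First I would compute the partial derivatives of the softmax: from \eqref{updatingprob},
\[
    \frac{\partial g_\beta^k}{\partial s^\ell}(s) = 2\beta\, g_\beta^k(s)\bigl(\delta_{k\ell} - g_\beta^\ell(s)\bigr).
\]
Invoking Lemma~\ref{g(x)=x}, at any $x \in \mathfrak{S}_{\beta,q}$ we may replace $g_\beta^k(x)$ by $x^k$, so the Jacobian entries are $\mathbf{J}(x)_{k,\ell} = 2\beta\, x^k(\delta_{k\ell} - x^\ell)$. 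Writing $d := (s_1 - s_2)^\top$ and noting $\sum_\ell d^\ell = 0$ since $s_1, s_2 \in \mathcal{S}$, the rank-one piece $-2\beta x^k x^\ell$ of $\mathbf{J}(x)_{k,\ell}$ contributes $-2\beta x^k \sum_\ell x^\ell d^\ell$ to $(\mathbf{J}(x)d)^k$, and this is the quantity that will collapse when $x$ has a highly symmetric form.

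For $x = \hat{e}$, every $x^\ell = 1/q$, so $\sum_\ell x^\ell d^\ell = q^{-1}\sum_\ell d^\ell = 0$, leaving only the diagonal contribution $\mathbf{J}(\hat{e})d = (2\beta/q)d$; this immediately gives the claim for $\mathbf{A}(\hat{e})$. For $x = \mathbf{T}^j\check{s}_{\beta,q}$, set $\alpha := s^*_{\beta,q}$ and $\gamma := (1-s^*_{\beta,q})/(q-1)$, so $x^j = \alpha$ and $x^k = \gamma$ for $k \neq j$. The $j$th coordinate of $\mathbf{J}(x)d$ equals
\[
    2\beta\alpha(1-\alpha)d^j - 2\beta\alpha\gamma\sum_{\ell \neq j}d^\ell = 2\beta\alpha d^j\bigl(1 - \alpha + \gamma\bigr),
\]
after using $\sum_{\ell\neq j}d^\ell = -d^j$. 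A brief algebraic identity $1 - \alpha + \gamma = q\gamma$ then yields $a\, d^j$, matching the defined value of $a$. For any $k \neq j$, splitting the sum into $\ell = j$, $\ell = k$, and $\ell \neq j,k$ and again using $\sum_{\ell \neq j,k} d^\ell = -d^j - d^k$ gives
\[
    (\mathbf{J}(x)d)^k = -2\beta\alpha\gamma\, d^j + 2\beta\gamma(1-\gamma) d^k + 2\beta\gamma^2(d^j + d^k) = 2\beta\gamma\, d^k - 2\beta\gamma(\alpha - \gamma)d^j,
\]
which is precisely $a' d^k - b d^j$ with the stated $a'$ and $b$.

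There is no real obstacle here beyond bookkeeping; the statement holds essentially because the Jacobian of a softmax at its fixed point decomposes into a diagonal piece plus an outer-product piece, and the outer-product piece is killed by vectors in the tangent space of the simplex $\{d : \sum_\ell d^\ell = 0\}$. The only minor care is in the asymmetric case $x = \mathbf{T}^j\check{s}_{\beta,q}$, where one must track three kinds of entries of $\mathbf{J}(x)$ (for $\ell = j$, $\ell = k$, $\ell \neq j,k$) and carefully regroup after substituting $\sum_\ell d^\ell = 0$; I would verify the single algebraic identity $1 - s^* + (1-s^*)/(q-1) = q(1-s^*)/(q-1)$ at the appropriate point to collapse the coefficients into the forms $a$, $a'$, $b$ given in the statement.
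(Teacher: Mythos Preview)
Your proposal is correct and follows essentially the same approach as the paper: compute the softmax partial derivatives $\partial_\ell g_\beta^k = 2\beta g_\beta^k(\delta_{k\ell} - g_\beta^\ell)$, invoke Lemma~\ref{g(x)=x} to replace $g_\beta^k(x)$ by $x^k$, and then use the simplex constraint $\sum_\ell(s_1^\ell - s_2^\ell) = 0$ to collapse the off-diagonal contributions in the two cases $x = \hat{e}$ and $x = \mathbf{T}^j\check{s}_{\beta,q}$. Your presentation via the diagonal-plus-rank-one decomposition is somewhat more streamlined, but the computations and key identities (in particular $1 - s^*_{\beta,q} + (1-s^*_{\beta,q})/(q-1) = q(1-s^*_{\beta,q})/(q-1)$) are the same as in the paper.
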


We will defer the proof of Lemma~\ref{jacobian}, which relies on Lemma~\ref{g(x)=x} and algebraic manipulations, to Appendix~\ref{gradient_proof}. The next lemma bounds the Lipschitz constant of $g_\beta$ with respect to the $\ell_1$ norm for points that are sufficiently close to any equilibrium macrostate $x \in \mathfrak{S}_{\beta,q}$.

\begin{lemma} \label{L1contract}
Let $x \in \mathfrak{S}_{\beta,q}$, and $a > 0$ be as defined in Lemma~\ref{jacobian}. Define the positive constant $\theta(x, \beta, q)$ by
\[
    \theta(\hat{e}, \beta, q) := \frac{2 \beta}{q}
    \quad\text{and}\quad
    \theta(\mathbf{T}^j\check{s}_{\beta,q}, \beta, q) := a,\; j \in [q].
\]
Then for all $s_1, s_2 \in \mathcal{S}$ satisfying 
$\lVert s_1 - x \rVert_2 \leq r$ and $\lVert s_2 - x\rVert_2 \leq r$, we have
\begin{equation*}
    \frac{\lVert g_{\beta}(s_1) - g_{\beta}(s_2) \rVert_1}{\lVert s_1 - s_2 \rVert_1} \leq \theta(x, \beta, q) + O(r).
\end{equation*}
\end{lemma}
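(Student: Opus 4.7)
The plan is to Taylor-expand $g_\beta$ around $x$ to reduce the problem to bounding a linear map described by Lemma~\ref{jacobian}, and then to exploit the simplex constraint $\sum_k (s_1^k - s_2^k) = 0$ to get the sharp operator norm. Writing $s_t := s_2 + t(s_1 - s_2)$, I would first use the integral form of the mean value theorem to write
\begin{equation*}
    g_\beta(s_1) - g_\beta(s_2) = \mathbf{J}(x)(s_1 - s_2)^\top + \int_0^1 \bigl[\mathbf{J}(s_t) - \mathbf{J}(x)\bigr](s_1 - s_2)^\top \dd t.
\end{equation*}
Since $g_\beta$ is smooth and its Hessian components are bounded uniformly by a constant depending only on $\beta$ and $q$ on a neighbourhood of $\mathcal{S}$, one has $\|\mathbf{J}(s_t) - \mathbf{J}(x)\|_{1 \to 1} \leq C\|s_t - x\|_2 \leq Cr$ whenever $\max(\|s_1 - x\|_2, \|s_2 - x\|_2) \leq r$. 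Hence the remainder contributes $O(r)\|s_1 - s_2\|_1$, and the task reduces to proving $\|\mathbf{J}(x)(s_1 - s_2)^\top\|_1 \leq \theta(x,\beta,q)\|s_1 - s_2\|_1$.

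Next, I would invoke Lemma~\ref{jacobian} to replace the linear term by $\mathbf{A}(x)(s_1 - s_2)^\top$, reducing the problem to bounding the $\ell_1 \to \ell_1$ operator norm of $\mathbf{A}(x)$ on vectors $u = s_1 - s_2$ that lie in the simplex tangent space $\{u \in \reals^q : \sum_k u^k = 0\}$. The case $x = \hat{e}$ is immediate since $\mathbf{A}(\hat{e}) = (2\beta/q)\mathbf{I}$. For $x = \mathbf{T}^j\check{s}_{\beta,q}$, Lemma~\ref{jacobian} gives $(\mathbf{A}(x)u^\top)^j = a u^j$ and $(\mathbf{A}(x)u^\top)^k = a'u^k - b u^j$ for $k \neq j$, so the triangle inequality yields
\begin{equation*}
    \|\mathbf{A}(x) u^\top\|_1 \leq a|u^j| + a'\sum_{k \neq j}|u^k| + (q-1)b|u^j|.
\end{equation*}

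A short algebraic computation from the explicit formulas for $a$, $a'$, $b$ should yield the key identity $(q-1)b = a - a'$, after which the simplex constraint $|u^j| = |\sum_{k \neq j} u^k| \leq \sum_{k \neq j}|u^k|$ delivers $\|\mathbf{A}(x)u^\top\|_1 \leq a\|u\|_1 = \theta(x,\beta,q)\|u\|_1$, completing the argument.

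The hard part will be this last step, because the naive $\ell_1$ operator norm of $\mathbf{A}(x)$ on all of $\reals^q$ equals $\max(a + (q-1)b,\, a') = a + (q-1)b$, which is strictly greater than $a$ since $b > 0$ (note that $s^*_{\beta,q} > 1/q > (1 - s^*_{\beta,q})/(q-1)$ for $\beta \geq \beta_c$). So the simplex constraint is genuinely essential, and any slack in the triangle inequality has to be absorbed exactly by the identity $(q-1)b = a - a'$ together with $|u^j| \leq \sum_{k \neq j}|u^k|$. Verifying the identity carefully is therefore crucial for the sharp constant.
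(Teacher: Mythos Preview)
Your proposal is correct and follows essentially the same approach as the paper: both reduce to bounding $\|\mathbf{A}(x)u^\top\|_1$ on the simplex tangent space via the triangle inequality, the constraint $|u^j|\leq\sum_{k\ne j}|u^k|$, and the identity $(q-1)b=a-a'$. Your use of the integral mean value theorem to control the remainder is a slightly cleaner variant of the paper's Taylor expansion about $s_1$ followed by bounding $\mathbf{J}(s_1)-\mathbf{J}(x)$, but the substance is identical.
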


We will also defer the proof of Lemma~\ref{L1contract} to Appendix~\ref{L1contract_proof}. 
Now, we state a crucial technical condition that will be used to ensure that the Lipschitz constant of $g_{\beta}$ around any $x \in \mathfrak{S}_{\beta,q}$ is strictly less than $1$ for sufficiently small $r$, and to control the drift of the proportions chain $S(\tilde{\sigma}_t)$ induced by the restricted Glauber dynamics.

\begin{condition} \label{Cond:p}
$\theta(x, \beta, q) < 1$ and $\lambda(x, \beta, q) < 1$, where $\theta(x, \beta, q)$ is as defined in Lemma~\ref{L1contract} and $\lambda(x, \beta, q)$ is the maximum absolute eigenvalue of the symmetric part $(\mathbf{A} + \mathbf{A}^{\top}) / 2$ of the matrix $\mathbf{A} \equiv \mathbf{A}(x)$ defined in Lemma~\ref{jacobian}.
\end{condition}

Recalling that $\beta_s < \beta_c < q/2$ for $q \geq 3$, the following lemma shows that Condition~\ref{Cond:p} is satisfied under the assumptions on the inverse temperature parameter in Theorem~\ref{WDbound_high} (i.e.\ $\beta < \beta_s$) and Theorems~\ref{WDbound} and~\ref{mixingtime} (i.e.\ $\beta \geq \beta_s$). 
Note that Condition~\ref{Cond:p} is satisfied under more general assumptions; e.g.\ $\lambda(\hat{e}, \beta, q) = 2 \beta / q < 1$ holds for any $\beta < q/2$.

\begin{lemma} \label{expforcond}
Let $a, a', b > 0$ be as defined in Lemma~\ref{jacobian}, $\theta(x, \beta, q)$ be as defined in Lemma~\ref{L1contract}, and $\lambda(x, \beta, q)$ be as defined in Condition~\ref{Cond:p}. Then, one has $\lambda(\hat{e}, \beta, q) = 2 \beta / q$, and 
\begin{equation*}
    \lambda(\mathbf{T}^j\check{s}_{\beta,q}, \beta, q) = \frac{1}{2} \left( a + a' + \sqrt{(a - a')^2 + (q - 1) b^2} \right), \quad j \in [q].
\end{equation*}
Moreover, for all $q \geq 3$, the following statements hold:
\begin{enumerate}[label=\normalfont{(\arabic*)}]
    \item \label{expforcond_part1}
    If $x = \hat{e}$ and $\beta \leq \beta_c$, then $\theta(\hat{e}, \beta, q) = \lambda(x, \beta, q) = 2 \beta / q < 1$.
    \item \label{expforcond_part2}
    If $x = \mathbf{T}^j\check{s}_{\beta,q}$ and $\beta \geq \beta_c$, then $0 < b < a' < a = \theta(x, \beta, q) < \lambda(x, \beta, q) < 1$.
\end{enumerate}
\end{lemma}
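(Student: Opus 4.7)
My plan is to compute the eigenvalues of the symmetric part $M(x) := (\mathbf{A}(x) + \mathbf{A}(x)^{\top})/2$ directly to confirm the stated formulas for $\lambda(x, \beta, q)$, and then treat parts (1) and (2) separately. For $x = \hat{e}$, $\mathbf{A}(\hat{e}) = (2\beta/q)\mathbf{I}$ is already symmetric, giving $\lambda(\hat{e}, \beta, q) = 2\beta/q$. For $x = \mathbf{T}^j\check{s}_{\beta,q}$, by the symmetry of permuting coordinates it suffices to take $j = 1$, so that $M$ is an \emph{arrowhead matrix} with diagonal $(a, a', \ldots, a')$ and $-b/2$ in every off-diagonal entry of the first row and column. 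The $(q-2)$-dimensional subspace $V := \{v \in \reals^q : v_1 = 0,\; \sum_{k \geq 2} v_k = 0\}$ is $M$-invariant with $M$ acting as $a'\mathbf{I}$, contributing the eigenvalue $a'$ with multiplicity $q-2$. In the orthogonal complement of $V$, spanned by $e_1$ and $\mathbf{1}_{-1}/\sqrt{q-1}$ with $\mathbf{1}_{-1} := (0, 1, \ldots, 1) \in \reals^q$, $M$ reduces to the $2 \times 2$ matrix
\[
    \begin{pmatrix} a & -b\sqrt{q-1}/2 \\ -b\sqrt{q-1}/2 & a' \end{pmatrix},
\]
whose eigenvalues are $\lambda_{\pm} = \frac{1}{2}\bigl((a + a') \pm \sqrt{(a - a')^2 + (q - 1) b^2}\bigr)$. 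A short check that $\lambda_{-} \geq 0$ (equivalently $4 a a' \geq (q - 1) b^2$, which follows from the parameterization below via the identity $a'^2[4 + (q-1)s(4 - s)] > 0$) shows that $\lambda_{+}$ is the maximum absolute eigenvalue. Part (1) is then immediate: $\theta(\hat{e}, \beta, q) = \lambda(\hat{e}, \beta, q) = 2\beta/q$, and Theorem~\ref{pointofconcentration} gives $\beta \leq \beta_c < q/2$ for $q \geq 3$, so $2\beta/q < 1$.

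For part (2), I will first simplify $a$, $a'$, $b$ using Lemma~\ref{g(x)=x}. The fixed-point equation $g_\beta(\check{s}_{\beta,q}) = \check{s}_{\beta,q}$ rearranges to $e^{2\beta s} = (1 + (q-1)s)/(1 - s)$ with $s = s_{\beta,q}$, and combining this with $s^*_{\beta,q} = (1 + (q-1)s)/q$ and $1 - s^*_{\beta,q} = (q-1)(1-s)/q$ gives
\[
    a = \frac{2\beta(1-s)(1+(q-1)s)}{q}, \quad a' = \frac{2\beta(1-s)}{q}, \quad b = \frac{2\beta s(1-s)}{q},
\]
so that $b = a' s$, $a = a'(1 + (q-1)s)$, and the crucial identity $a - a' = (q-1)b$. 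Since Theorem~\ref{pointofconcentration} ensures $s_{\beta,q} \in [(q-2)/(q-1),\, 1) \subseteq (0, 1)$ whenever $\beta \geq \beta_c$ and $q \geq 3$, these relations immediately yield $0 < b < a' < a$; by definition $a = \theta(x, \beta, q)$, and from $b > 0$ one has $\sqrt{(a-a')^2 + (q-1)b^2} > a - a'$, whence $\lambda_{+} > \frac{1}{2}\bigl((a+a') + (a-a')\bigr) = a$.

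The crux is $\lambda_{+} < 1$. The identity $a - a' = (q-1)b$ produces the substantial simplification $(a-a')^2 + (q-1)b^2 = q(q-1)b^2$, reducing $\lambda_{+}$ to the closed form
\[
    \lambda_{+} = \frac{a + a'}{2} + \frac{b\sqrt{q(q-1)}}{2} = \frac{\beta(1-s)}{q}\Bigl[\, 2 + s\bigl((q-1) + \sqrt{q(q-1)}\bigr) \,\Bigr].
\]
Eliminating $\beta$ via $\beta = \frac{1}{2s}\log\frac{1 + (q-1)s}{1-s}$ reduces $\lambda_{+} < 1$ to a one-variable inequality in $s \in [(q-2)/(q-1),\, 1)$ parameterized by $q \geq 3$. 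I will conclude by (i) checking the boundary case $\beta = \beta_c$, where $\lambda_{+}$ collapses to $\frac{\log(q-1)}{q-2} + \frac{\log(q-1)}{\sqrt{q(q-1)}}$ (the critical case $q = 3$ reducing to $\log 2 \cdot (1 + 1/\sqrt{6}) < 1$, which follows from $\log 2 < 0.6932$ and $\sqrt{6} > 2.449$); (ii) observing that $\lambda_{+} \to 0$ as $s \to 1$ (equivalently $\beta \to \infty$), since $(1-s)\log\frac{1}{1-s} \to 0$; and (iii) bounding the interior of $s \in ((q-2)/(q-1), 1)$ by a monotonicity analysis of the resulting one-variable function. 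I expect step (iii) to be the main obstacle, because the standard free-energy convexity argument (the Hessian of $G_\beta$ being positive semi-definite on the tangent space $\{v \in \reals^q : \sum_k v_k = 0\}$ of the simplex) only yields $a, a' \leq 1$ and is insufficient to conclude $\lambda_{+} \leq 1$: the eigenvector of $M$ associated to $\lambda_{+}$ fails to lie in this tangent space, so a direct one-dimensional analytic bound is genuinely required.
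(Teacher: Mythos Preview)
Your approach is correct and essentially mirrors the paper's: both compute the eigenvalues of the symmetrized arrowhead matrix explicitly, use the identity $a - a' = (q-1)b$ to simplify the discriminant to $q(q-1)b^2$, eliminate $\beta$ via the fixed-point relation, and reduce $\lambda_+ < 1$ to a one-variable inequality on $[\,(q-2)/(q-1),\,1)$ (the paper parameterizes by $s^*_{\beta,q}$ rather than $s_{\beta,q}$, but this is cosmetic). For your step (iii), the paper proceeds exactly as you anticipate: it writes $\lambda_1 - 1$ as a positive prefactor times a function $f(s^*)$, bounds $f$ above by an auxiliary $g$ obtained by replacing $\sqrt{q/(q-1)}$ with the constant $5/4$, verifies $g(1-1/q) < 0$ for all $q \geq 3$ (your check at $q=3$ alone does not suffice, since the endpoint value depends on $q$), and shows $g' < 0$ on $[1-1/q,1]$ by reducing the sign of the numerator to a cubic polynomial inequality.
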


The proof of Lemma~\ref{expforcond}, which involves fairly lengthy computations, will also be deferred to Appendix~\ref{expforcond_proof}. Parts of the proof are essentially embedded in the proofs of~\cite[Theorem~2.1, Proposition~2.2]{ellis1990limit}; however, we will provide a complete, self-contained proof.

The rest of this section focuses on proving Theorems~\ref{WDbound} and~\ref{mixingtime} in the low-temperature regime $\beta \geq \beta_s$, and is organised as follows. In Section~\ref{concenGD}, we show that with high probability, the Glauber dynamics restricted to $\widetilde{\Omega}(x, r)$ will stay in $\widetilde{\Omega}(x, \frac{4r}{5})$ and thus avoid the boundary for $O(N \log (N)^2)$ time. Next, in Section~\ref{coupling}, we specify a coupling of two copies of the restricted dynamics that is contracting according to~\eqref{eq:defn_contracting} as long as they stay within $\widetilde{\Omega}(x, \frac{4r}{5})$.
With these two results, the proof of Theorem~\ref{mixingtime} is rather straightforward and is given in Section~\ref{upperbound}. Finally,
we assemble the proof of Theorem~\ref{WDbound} in Section~\ref{pfWDB}, and briefly describe the modifications needed to prove Theorem~\ref{WDbound_high} in the high-temperature regime $\beta < \beta_s$ in Section~\ref{pfWDBhigh}.

\subsection{Concentration of the restricted Glauber dynamics} \label{concenGD}

Consider the Glauber dynamics for the Curie--Weiss--Potts model restricted to $\widetilde{\Omega}(x, r) = \left\{ \sigma \in \Omega: \lVert S(\sigma) - x \rVert_{2} \leq r \right\}$, denoted by $(\tilde{\sigma}_t)_{t \geq 0}$, with initial state $\sigma \in \widetilde{\Omega}(x, r)$. We shall denote the underlying probability measure by $\mathbb{P}^{x}_{\sigma}$,  the expectation by $\mathbb{E}^{x}_{\sigma}$, and the corresponding natural filtration by $\mathcal{F}_t$.

The main goal of this section is to show that if the restricted Glauber dynamics starts in $\widetilde{\Omega}(x, \frac{r}{5})$, then, with high probability, it will stay within $\widetilde{\Omega}(x, \frac{4r}{5})$ and avoid the boundary for $O(N \log (N)^2)$ time---this is chosen to be a sufficiently long period relative to the mixing time given in Theorem~\ref{mixingtime}. Moreover, if the chain starts outside $\widetilde{\Omega}(x, \frac{r}{5})$, then it will enter the region in $O(N)$ time.\footnote{Note that the factors $4/5$ and $1/5$ in the nested $\ell_2$ balls with radii of order $O(r)$ were chosen arbitrarily.} To be precise, we will prove the following:

\begin{lemma} \label{GlauberConcen}
For any $x \in \mathfrak{S}_{\beta,q}$, define the following stopping times for the restricted Glauber dynamics $(\tilde{\sigma}_t)_{t \geq 0}$ starting from $\sigma \in \widetilde{\Omega}(x, r)$:
\begin{equation*}
    \tau_{\mathrm{out}} := \inf\left\{ t \geq 0: \tilde{\sigma}_{t} \notin \widetilde{\Omega}\left( x, \frac{4r}{5} \right) \right\}
    \quad \text{and} \quad
    \tau_{\mathrm{in}} := \inf\left\{t\geq 0: \tilde{\sigma}_{t} \in \widetilde{\Omega}\left( x, \frac{r}{5} \right) \right\}.
\end{equation*}
Suppose that Condition~\ref{Cond:p} is satisfied and $r$ is sufficiently small. Then:
\begin{enumerate}[label=\normalfont{(\arabic*)}]
    \item \label{GlauberConcen_part1}
    There exists a constant $c > 0$ such that for all $\gamma > 0$ and $N$ large enough, 
    \begin{equation*}
        \mathbb{P}^{x}_{\sigma}\left( \tau_{\mathrm{out}} \leq \gamma N\log(N)^2 \right) \leq 2 \exp\left\{ -\frac{cN}{\gamma \log (N)^2} \right\},
    \end{equation*}
    for all $\sigma\in\widetilde{\Omega}(x, \frac{r}{5})$.
    
    \item \label{GlauberConcen_part2}
    There exists a constant $\gamma^* > 0$ and $c>0$ such that for all $N$ large enough, 
    \begin{equation*}
       \mathbb{P}^x_{\sigma}\left( \tau_{\mathrm{in}} > \gamma^* N \right) \leq e^{-cN},
    \end{equation*}
    for all $\sigma \in \widetilde{\Omega}(x, r)$.
\end{enumerate}
\end{lemma}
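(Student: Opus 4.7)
The plan is to study the Lyapunov function $M_t := \|V_t\|_2^2$ with $V_t := S(\tilde\sigma_t) - x$, and decompose it via the Doob decomposition $M_t = M_0 + A_t + B_t$ into a predictable compensator $A_t$ and a martingale $B_t$. Both parts of the lemma will then follow from the same negative-drift estimate combined with a maximal Azuma inequality, applied on different time scales.

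To compute the drift, I would first work with the \emph{unrestricted} Glauber dynamics: picking a uniform vertex $v$ and resampling its colour from $g_\beta(S_v(\tilde\sigma_t))$ shifts $S$ by $\tfrac{1}{N}(e_k - e_{\tilde\sigma_t(v)})$, and summing over $v$ and $k$ together with the estimate $S_v(\sigma) - S(\sigma) = -e_{\sigma(v)}/N$ yields
\[
    \E[V_{t+1} - V_t \mid \mathcal{F}_t] = \tfrac{1}{N}\bigl(g_\beta(S(\tilde\sigma_t)) - S(\tilde\sigma_t)\bigr) + O(1/N^2).
\]
Invoking the mean-field identity $g_\beta(x)=x$ (Lemma~\ref{g(x)=x}), the Jacobian relation $\mathbf{J}(x)(s - x)^\top = \mathbf{A}(x)(s-x)^\top$ (Lemma~\ref{jacobian}), and the spectral bound $V_t \tfrac{\mathbf{A}+\mathbf{A}^\top}{2} V_t^\top \leq \lambda(x,\beta,q) \|V_t\|_2^2$ with $\lambda<1$ (Condition~\ref{Cond:p}, valid by Lemma~\ref{expforcond}), a Taylor expansion gives
\[
    \E[M_{t+1} - M_t \mid \mathcal{F}_t] \leq -\tfrac{2(1-\lambda)}{N}\, M_t + O(\|V_t\|_2^3/N) + O(1/N^2).
\]
For $r$ sufficiently small the cubic correction is absorbed, leaving $\E[M_{t+1}-M_t \mid \mathcal{F}_t] \leq -\tfrac{c_1}{N} M_t + \tfrac{c_2}{N^2}$. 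Crucially, within $\widetilde\Omega(x, 4r/5)$ each step changes $\|V_t\|_2$ by at most $\sqrt{2}/N < r/5$ for $N$ large, so no rejection occurs and the estimate transfers verbatim to the restricted dynamics; more generally, rejecting outward-pointing updates only strengthens the inward pull, so the bound remains valid throughout $\widetilde\Omega(x, r)$.

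The martingale $B_t$ is easy to control: since $|M_{t+1}-M_t| \leq 2\|V_t\|_2\|V_{t+1}-V_t\|_2 + \|V_{t+1}-V_t\|_2^2 = O(1/N)$, the compensated differences satisfy $|B_{t+1}-B_t| \leq C/N$, and the standard exponential-submartingale argument (Doob's maximal inequality applied to $e^{\pm \xi B_t}$) yields
\[
    \Pr\Bigl( \max_{s \leq t} B_s \geq u \Bigr) \leq \exp\bigl(-\tfrac{u^2 N^2}{2 C^2 t}\bigr).
\]
For part~\ref{GlauberConcen_part1}, starting from $M_0 \leq r^2/25$, the event $\{\tau_{\mathrm{out}} \leq t\}$ with $t := \gamma N\log(N)^2$ entails $\max_{s\leq t} M_s \geq 16r^2/25$; since the compensator satisfies $A_s \leq c_2 s/N^2 = O(\log(N)^2/N) = o(r^2)$ uniformly on $[0,t]$, this forces $\max_{s\leq t} B_s \geq 14r^2/25$, and plugging $u = 14r^2/25$ into the above bound (applied also to $-B_t$) produces $2\exp(-cN/(\gamma \log(N)^2))$ as claimed. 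For part~\ref{GlauberConcen_part2}, on $\{\tau_{\mathrm{in}} > \gamma^* N\}$ we have $M_s > r^2/25$ throughout $s \in [0, \gamma^* N]$, so the drift is strictly negative of order $-c_3/N$ per step and $A_{\gamma^* N} \leq -c_3 \gamma^*$; combined with $M_{\gamma^* N} \geq r^2/25$ and $M_0 \leq r^2$, this yields $B_{\gamma^* N} \geq c_3 \gamma^* - 24 r^2/25$, so choosing $\gamma^*$ large enough that this lower bound is a positive constant and applying Azuma with $t = \gamma^* N$ produces the exponential-in-$N$ tail.

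The main obstacle is the drift computation: one must carefully Taylor-expand $g_\beta$ around $x$, track both the $O(1/N)$ offsets between $S_v(\sigma)$ and $S(\sigma)$ and the $O(\|V_t\|_2^2)$ higher-order terms, and then justify that the restriction only reinforces the inward drift. Once these are in place, the two parts of the lemma reduce to essentially the same concentration argument on the time scales $\gamma N\log(N)^2$ and $\gamma^* N$.
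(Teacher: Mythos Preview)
Your proposal is correct and follows essentially the same route as the paper: both analyze the Lyapunov function $\|S(\tilde\sigma_t)-x\|_2^2$, derive the same drift bound via the mean-field identity, the Jacobian relation, and the spectral condition $\lambda<1$, and handle the boundary by noting that rejections only help. The sole difference is cosmetic: where you unpack the concentration step via the Doob decomposition and a maximal Azuma inequality, the paper invokes a pre-packaged hitting-time lemma for bounded-increment supermartingales (restated from \cite[Lemma~2.1]{cuff2012}), whose proof is itself an Azuma-type argument, so the two presentations are equivalent.
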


For any $x \in \mathfrak{S}_{\beta,q}$, let $(S_t)_{t \geq 0}$ with $S_t \coloneqq S(\tilde{\sigma}_t)$, considered as a row vector, denote the \emph{proportions chain} induced by the Glauber dynamics restricted to $\widetilde{\Omega}(x, r)$, which is a Markov chain on $\{ s \in \mathcal{S}: \lVert s - x \rVert_{2} \leq r\}$. We will also define the corresponding \emph{centred proportions chain} $(\widehat{S}_t)_{t \geq 0}$ by
\begin{equation}
    \widehat{S}_t \equiv \widehat{S}_t(x) := S_t - x.
\end{equation}
The following lemma describes the dynamics of the centred proportions chain $(\widehat{S}_t)_{t \geq 0}$ up to leading order by using Taylor series expansions for computing the drift.

\begin{lemma} \label{Xcontracting}
Let $(\tilde{\sigma}_t)_{t \geq 0}$ be the Glauber dynamics restricted to $\widetilde{\Omega}(x, r)$ for any $x \in \mathfrak{S}_{\beta, q}$, and $\widehat{S}_t$ be defined as above. Assuming that $\tilde{\sigma}_t$ is not at the boundary, then
\begin{equation*}
    \mathbb{E}^x_{\sigma}\left[ \widehat{S}_{t+1} - \widehat{S}_t \mid \mathcal{F}_t \right] = \frac{1}{N} \left( - \widehat{S}_t (\mathbf{I} - \mathbf{A}^{\top}) + O\left( \lVert \widehat{S}_t \rVert_2^2 \right) \right) + O(N^{-2}),
\end{equation*}
where $\mathbf{A} \equiv \mathbf{A}(x)$ is the matrix related to the Jacobian of $g_\beta$ at $x$ from Lemma~\ref{jacobian},
$\mathbf{I}$ is the $q \times q$ identity matrix, and the $O(\cdot)$ terms are understood to hold elementwise.
\end{lemma}

\begin{proof}
Let $e_k$ denote the standard basis vectors in $\reals^q$ (considered as row vectors, following our convention).
Recall that the coordinates of $S_t \in \mathcal{S}$ denote the proportions of vertices with each of the $q$ colours. According to the Glauber dynamics, the randomly selected vertex has colour $k \in [q]$ with probability $S_t^{(k)}$, and recoloured with a new colour $\ell \in [q]$ with probability $g_\beta^{(\ell)}\left( S_t - N^{-1} e_k \right)$.
Observe that by Taylor expansion of $g_\beta^{(\ell)}$ around $S_t$ with the mean-value form of the remainder, we have
\begin{equation} \label{eq:gbeta_taylor}
    g_\beta^{(\ell)}\left( S_t - N^{-1} e_k \right) = g_\beta^{(\ell)}\left( S_t \right) - N^{-1} \nabla g_\beta^{(\ell)}(\xi_k) e_k^{\top}
\end{equation}
for some vector $\xi_k$ in the line between $S_t$ and $S_t - N^{-1} e_k$. Furthermore, the derivatives of $g_\beta^{(\ell)}$ are bounded by constants (depending on $\beta$) on the compact probability simplex.
Thus, $S_t^{(\ell)}$ increases by $N^{-1}$ with probability
\begin{align*}
    \mathbb{P}^x_{\sigma}\left( S_{t+1}^{(\ell)} = S_t^{(\ell)} + N^{-1} \right)
    &= \sum_{\substack{k \in [q] \\ k \neq \ell}} g_{\beta}^{{(\ell)}}\left( S_t - N^{-1} e_{k} \right) S_t^{(k)} \\
    &= \sum_{k=1}^q g_{\beta}^{(\ell)}\left( S_t - N^{-1} e_{k} \right) S_t^{(k)} - g_{\beta}^{(\ell)}\left( S_t - N^{-1} e_{\ell} \right) S_t^{(\ell)} \\
    &= \left( 1 - S_t^{(\ell)} \right) \left( g_{\beta}^{(\ell)} \left( S_t \right) + O(N^{-1}) \right),
\end{align*}
where we used Taylor expansion~\eqref{eq:gbeta_taylor} and the fact that $\sum_{k=1}^q S_t^{(k)}=1$ for the last equality. Similarly, $S_t^{(\ell)}$ decreases by $N^{-1}$ with probability
\begin{align*}
    \mathbb{P}^x_{\sigma}\left( S_{t+1}^{(\ell)} = S_t^{(\ell)} - N^{-1} \right)
    &= \sum_{\substack{k \in [q] \\ k \neq \ell}} g_{\beta}^{(k)}\left( S_t - N^{-1} e_{\ell} \right) S_t^{(\ell)} \\
    & =\sum_{k=1}^q g_{\beta}^{(k)}\left( S_t - N^{-1} e_{\ell} \right) S_t^{(\ell)} - g_{\beta}^{(\ell)}\left( S_t - N^{-1} e_{\ell} \right) S_t^{(\ell)} \\
    &= S_t^{(\ell)} \left( 1 - g_{\beta}^{(\ell)}\left( S_t \right) + O(N^{-1}) \right).
\end{align*}
Therefore, for all $\ell = 1, \ldots, q$, one has
\begin{align*}
    \widehat{S}^{(\ell)}_{t+1} - \widehat{S}^{(\ell)}_t=
    \begin{cases}
        +\frac{1}{N} &\text{w.p.}\quad (1 - S^{(\ell)}_t) \left( g_{\beta}^{(\ell)}\left( S_t \right) + O(N^{-1}) \right),\\
        -\frac{1}{N} &\text{w.p.}\quad S^{(\ell)}_t\left( 1 - g_{\beta}^{(\ell)}\left( S_t \right) + O(N^{-1}) \right),
    \end{cases}
\end{align*}
and hence
\begin{equation} \label{Sdrift}
    \mathbb{E}^{x}_{\sigma}\left[ S_{t+1}^{(\ell)} - S_t^{(\ell)} \mid \mathcal{F}_t \right]
    = N^{-1} \left( -S_t^{(\ell)} + g_{\beta}^{(\ell)}(S_t) \right) + O(N^{-2}).
\end{equation}

First, we consider the case $x = \hat{e}$.
By Taylor expansion of $g_\beta^{(\ell)}$ around $\hat{e}$ up to higher order, recalling that $\widehat{S}_t = S_t - \hat{e}$, we obtain
\[
    g_\beta^{(\ell)}\left( S_t \right) = g_\beta^{(\ell)}\left( \hat{e} \right) - \nabla g_\beta^{(\ell)}(\hat{e}) \widehat{S}_t^{\top} + \sum_{j,k \in [q]} R_{j,k} \widehat{S}_t^{(j)} \widehat{S}_t^{(k)}.
\]
Note that the remainder terms $R_{j,k}$ can be uniformly bounded by a constant that depends on the higher-order derivatives of $g_\beta^{(\ell)}$, which are bounded on the compact probability simplex. Moreover, for all $j, k$, we can also bound $\widehat{S}_t^{(j)} \widehat{S}_t^{(k)} \leq \norm{\widehat{S}_t}_2^2$. 
Since $g_\beta^{(\ell)}\left( \hat{e} \right) = 1/q$ from Lemma~\ref{g(x)=x}, together with Lemma~\ref{jacobian}, we deduce that
\begin{align*}
    g_{\beta}^{(\ell)}\left( S_t \right)
    = \frac{1}{q} + \nabla g_{\beta}^{(\ell)}(\hat{e}) \widehat{S}^{\top}_t + O(\lVert \widehat{S}_t \rVert_2^2)
    = \frac{1}{q} + \frac{2\beta}{q} \widehat{S}^{(\ell)}_t + O(\lVert \widehat{S}_t \rVert_2^2).
\end{align*}
From~\eqref{Sdrift}, it follows that for all $\ell = 1, \ldots, q$,
\begin{equation*}
    \mathbb{E}^{\hat{e}}_{\sigma}\left[ S_{t+1}^{(\ell)} - S_t^{(\ell)} \mid \mathcal{F}_t \right]= N^{-1} \left( -\left( 1 - \frac{2\beta}{q} \right) \widehat{S}_t^{(\ell)} +O\left( \lVert \widehat{S}_t \rVert_2^2 \right) \right) + O(N^{-2}).
\end{equation*}

Next, we consider the case $x = \mathbf{T}^j\check{s}_{\beta,q}$, $j \in [q]$. By symmetry, it suffices to consider $x = \mathbf{T}^1\check{s}_{\beta,q} = \check{s}_{\beta,q}$. Similar to the case above, by using a Taylor series expansion of $g_{\beta}^{(\ell)}$ around $\check{s}_{\beta,q}$, recalling $\widehat{S}^t = S_t - \check{s}_{\beta,q}$ and Lemma~\ref{g(x)=x}, we deduce that
\begin{equation*} 
    g_{\beta}^{(\ell)}\left( S_t \right) ={\check{s}_{\beta,q}}^{(\ell)} + \nabla g_{\beta}^{(\ell)}(\check{s}_{\beta,q}) \widehat{S}^{\top}_t + O(\lVert \widehat{S}_t \rVert_2^2).
\end{equation*}
Furthermore, Lemma~\ref{jacobian} implies that
\begin{align*}
    \nabla g^{(1)}_{\beta}(\check{s}_{\beta,q}) \widehat{S}_t^{\top} &= a \widehat{S}_t^{(1)},\\
    \nabla g^{(\ell)}_{\beta}(\check{s}_{\beta,q}) \widehat{S}_t^{\top} &= -b \widehat{S}_t^{(1)} + a' \widehat{S}_t^{(\ell)}, \quad \ell = 2, \dots, q.
\end{align*}
Thus, from~\eqref{Sdrift}, it follows that
\begin{equation*}
    \mathbb{E}^{\check{s}_{\beta,q}}_{\sigma}\left[ \widehat{S}^{(1)}_{t+1} - \widehat{S}^{(1)}_t \mid \mathcal{F}_t \right] = N^{-1} \left( -(1 - a) \widehat{S}_t^{(1)} + O\left( \lVert \widehat{S}_t \rVert_2^2 \right) \right) + O(N^{-2});
\end{equation*}
and for $\ell = 2, \ldots, q$, it follows that
\begin{equation*}
    \mathbb{E}^{\check{s}_{\beta,q}}_{\sigma} \left[ \widehat{S}^{(\ell)}_{t+1} - \widehat{S}^{(\ell)}_t \mid \mathcal{F}_t \right] = N^{-1} \left( -(1 - a') \widehat{S}_t^{(\ell)} -b \widehat{S}_t^{(1)} + O\left( \lVert \widehat{S}_t\rVert_2^2 \right) \right) + O(N^{-2}).
\end{equation*}
Writing the above displayed expressions in matrix form completes the proof.
\end{proof} 

We will now prove Lemma~\ref{GlauberConcen} using the formula for the drift of the centred proportions vector given in Lemma~\ref{Xcontracting}. The proof relies on some standard hitting time estimates for supermartingale-like processes provided in~\cite[Lemma~2.1]{cuff2012}. For the reader's convenience, we restate the parts that we use in the following lemma.

\begin{lemma}[{\cite[Parts~(1) and~(2) of Lemma~2.1]{cuff2012}}] \label{cuff2.1}
Let $(D_t)_{t \geq 0}$ be a discrete-time process, adapted to $(\mathcal{F}_t)_{t > 0}$, with $D_0 = d_0 \in \mathbb{R}$ and underlying probability measure $\mathbb{P}_{d_0}$. Suppose that
\begin{itemize}
    \item There exists $\delta \geq 0$ such that $\mathbb{E}_{d_0}[D_{t+1} - D_{t} \mid \mathcal{F}_t] \leq -\delta$ on $\{D_t \geq 0\}$ for all $t \geq 0$.
    \item There exists $R \geq 0$ such that $\lvert  D_{t+1} - D_t \rvert \leq R$ for all $t \geq 0$.
\end{itemize}
Let $\tau_d^- := \inf\{ t : D_t \leq d \}$ and $\tau_d^+ := \inf\{ t : D_t > d \}$. Then the following hold:
\begin{enumerate}[label=\normalfont{(\arabic*)}]
    \item \label{cuff2.1_part1}
    If $d_0 \leq 0$ then for any $d_1 \geq R$ and $t_2 \geq 0$, 
    \begin{equation*}
        \mathbb{P}_{d_0}(\tau_{d_1}^+ \leq t_2) \leq  2 \exp\left\{ -\frac{(d_1 - R)^2}{8 t_2 R^2} \right\}.
    \end{equation*}

    \item \label{cuff2.1_part2}
    If $\delta > 0$ and $d_0 \geq 0$, then for any $t_1 \geq d_0 / \delta$,
    \begin{equation*}
        \mathbb{P}_{d_0}(\tau_{0}^- > t_1) \leq \exp\left\{ -\frac{(\delta t_1 - d_0)^2}{8 t_1 R^2} \right\}.
    \end{equation*}
\end{enumerate}
\end{lemma}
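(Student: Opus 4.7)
The plan is to apply the Azuma--Hoeffding inequality to the compensated martingale
\[
    M_t := D_t - D_0 - \sum_{s=0}^{t-1} \Delta_s,
    \qquad \Delta_s := \mathbb{E}_{d_0}[D_{s+1} - D_s \mid \mathcal{F}_s].
\]
Since the one-step increments of $D_t$ are bounded by $R$, so is each $|\Delta_s|$, and hence $|M_{t+1} - M_t| \leq 2R$. Azuma--Hoeffding then yields the one-sided bound $\mathbb{P}_{d_0}(M_t \geq x) \leq \exp(-x^2 / (8 t R^2))$ for $x \geq 0$, and Doob's maximal inequality applied to the exponential submartingales $e^{\pm \lambda M_t}$ promotes this (up to a factor of $2$) to a two-sided maximal statement over any deterministic window. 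Both parts of the lemma reduce to exhibiting, on the relevant event, a deterministic lower bound on a suitable deviation of $M$.

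Part (2) follows immediately. On $\{\tau_0^- > t_1\}$ the process stays strictly positive throughout $[0, t_1]$, so every $\Delta_s \leq -\delta$, and $D_{t_1} > 0$. Unwinding the definition of $M_{t_1}$,
\[
    M_{t_1} = D_{t_1} - D_0 - \sum_{s=0}^{t_1 - 1} \Delta_s > -d_0 + \delta t_1 = \delta t_1 - d_0,
\]
and the hypothesis $t_1 \geq d_0 / \delta$ ensures this threshold is non-negative, so the one-sided Azuma--Hoeffding bound applied at $x = \delta t_1 - d_0$ delivers the claim.

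For part (1) the complication is that the drift bound $\Delta_s \leq -\delta$ holds only on $\{D_s \geq 0\}$, whereas $d_0 \leq 0$ lets the process wander through the negative region with uncontrolled drift before eventually crossing above $d_1$. The plan is to localise the analysis to the ``last non-positive excursion'' preceding $\tau_{d_1}^+$: on the event $\{\tau_{d_1}^+ \leq t_2\}$, define
\[
    \sigma := \max\{s \leq \tau_{d_1}^+ : D_s \leq 0\},
\]
which exists because $D_0 \leq 0$. The bounded-increment hypothesis forces $D_{\sigma + 1} \leq R$, while by construction $D_s > 0$ for $s \in (\sigma, \tau_{d_1}^+]$, so $\Delta_s \leq -\delta \leq 0$ along this entire interval. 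Writing
\[
    M_{\tau_{d_1}^+} - M_{\sigma + 1} = \bigl( D_{\tau_{d_1}^+} - D_{\sigma + 1} \bigr) - \sum_{s = \sigma + 1}^{\tau_{d_1}^+ - 1} \Delta_s > (d_1 - R) + 0
\]
shows that the compensated martingale $M$ must fluctuate by more than $d_1 - R$ over at most $t_2$ consecutive time steps.

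The main obstacle is that $\sigma$ is not a stopping time of $(\mathcal{F}_t)$ (it depends on the behaviour of $D$ at the future time $\tau_{d_1}^+$), so the deviation $M_{\tau_{d_1}^+} - M_{\sigma + 1}$ cannot be fed directly into the Azuma--Hoeffding inequality. The plan to bypass this is to dominate the quantity by the range $\max_{0 \leq t \leq t_2} M_t - \min_{0 \leq t \leq t_2} M_t$ of the martingale over the deterministic window $[0, t_2]$, and then to apply the two-sided maximal version of Azuma--Hoeffding at threshold $d_1 - R$. The factor of $2$ in the final inequality arises precisely from the union bound over the upper and lower tails of $M$ in this maximal statement.
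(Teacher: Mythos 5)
The paper does not reprove this lemma (it is cited directly from~\cite{cuff2012}), so I will assess your argument on its own merits. Your scheme --- extract the martingale $M_t$ from the Doob decomposition, observe that the compensator is nonpositive on positive excursions, localise Part~(1) to the last non-positive excursion, and reduce to a maximal deviation bound on $M$ --- is the right idea, and your argument for Part~(2) is correct. However, Part~(1) has a quantitative gap. The event $\{\max_{t \le t_2} M_t - \min_{t\le t_2} M_t > d_1 - R\}$ cannot be fed into the two-sided maximal bound ``at threshold $d_1 - R$'' as you state: since $M_0 = 0$ we have $\max M_t \ge 0 \ge \min M_t$, and range $> d_1 - R$ only forces $\max_{t\le t_2}\lvert M_t\rvert > (d_1 - R)/2$, so the threshold must be halved. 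Combined with the Azuma constant you quote, $\mathbb{P}(M_t \ge x) \le \exp(-x^2/(8 t R^2))$ (which follows from crudely bounding $\lvert M_{t+1}-M_t\rvert \le 2R$), the halving gives $2\exp(-(d_1-R)^2/(32\, t_2 R^2))$ --- a factor of $4$ short in the exponent of the claimed bound.

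The fix is to use the sharp form of Azuma--Hoeffding tailored to this increment. Conditionally on $\mathcal{F}_t$, the increment $M_{t+1} - M_t = (D_{t+1} - D_t) - \Delta_t$ has mean zero and lies in the interval $[-R - \Delta_t,\, R - \Delta_t]$ of width exactly $2R$ (because $D_{t+1}-D_t \in [-R,R]$ and $\Delta_t$ is $\mathcal{F}_t$-measurable); it does not merely satisfy $\lvert M_{t+1}-M_t\rvert\le 2R$, which would correspond to an interval of width $4R$. Hoeffding's lemma then gives $\mathbb{E}[e^{\lambda(M_{t+1}-M_t)} \mid \mathcal{F}_t] \le \exp(\lambda^2 R^2/2)$, hence the maximal bound $\mathbb{P}(\max_{s\le t} M_s \ge x) \le \exp(-x^2/(2 t R^2))$. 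With this constant, halving the threshold and taking the union bound over $\pm M$ yields exactly $2\exp(-(d_1 - R)^2/(8 t_2 R^2))$ for Part~(1). (For Part~(2) the same sharp constant gives a denominator of $2 t_1 R^2$, strictly stronger than the stated $8 t_1 R^2$, so both parts come out consistently from a single form of Azuma.)
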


\begin{proof}[Proof of Lemma~\ref{GlauberConcen}]
Consider the case where the restricted Glauber dynamics $\tilde{\sigma}_t$ is not on the boundary, and thus all possible moves are allowed. If we write $\widehat{S}_{t+1} = \widehat{S}_{t} + \xi_{t+1}$, then
\begin{align}\label{drift}
    \mathbb{E}^x_{\sigma}\left[ \lVert \widehat{S}_{t+1} \rVert_2^2 \mid \mathcal{F}_t \right]
    &= \mathbb{E}^x_{\sigma}\left[ \lVert \widehat{S}_{t} \rVert_2^2 + \lVert \xi_{t+1} \rVert_2^2 + 2\langle \xi_{t+1}, \widehat{S}_t \rangle \mid \mathcal{F}_t \right] \nonumber\\
    &= \lVert \widehat{S}_{t} \rVert_2^2 + \mathbb{E}^x_{\sigma}\left[ \lVert \xi_{t+1} \rVert_2^2  \mid \mathcal{F}_t \right] + 2\langle \mathbb{E}^x_{\sigma}\left[ \xi_{t+1} \mid \mathcal{F}_t \right], \widehat{S}_t \rangle.
\end{align}
First, we have the estimate
\begin{equation}\label{eq:xi_bound}
    \norm{\xi_{t+1}}_2^2 = \norm{\widehat{S}_{t+1} - \widehat{S}_t}_2^2 = \norm{S_{t+1} - S_t}_2^2 \leq 2 N^{-2}.
\end{equation}
Next, if $\mathbf{A} \equiv \mathbf{A}(x)$ denotes the matrix related to the Jacobian of $g_\beta$ at $x$ from Lemma~\ref{jacobian}, then Lemma~\ref{Xcontracting} implies that
\begin{equation*}
    \mathbb{E}^x_{\sigma}\left[ \xi_{t+1} \mid \mathcal{F}_t \right]
    = \mathbb{E}^x_{\sigma}\left[ \widehat{S}_{t+1} - \widehat{S}_t \mid \mathcal{F}_t \right] = \frac{1}{N} \left( -\widehat{S}_t (\mathbf{I} - \mathbf{A}^{\top}) + O\left( \lVert \widehat{S}_t \rVert_2^2 \right) \right) + O(N^{-2}),
\end{equation*}
We claim that
\begin{equation} \label{claim:contracting}
    \langle \mathbb{E}^x_{\sigma}\left[ \xi_{t+1} \mid \mathcal{F}_t \right], \widehat{S}_t \rangle
    \leq -\frac{(1 - \lambda(x,\beta, q) +O(\lVert \widehat{S}_t \rVert_2))}{N} \lVert \widehat{S}_{t} \rVert_2^2 + O(N^{-2}),
\end{equation}
where $\lambda(x, \beta, q)$ is the maximum absolute eigenvalue of the symmetric matrix $(\mathbf{A} + \mathbf{A}^{\top}) / 2$, which satisfies $\lambda(x, \beta, q) \in (0, 1)$ assuming Condition~\ref{Cond:p} holds. The claim~\eqref{claim:contracting} follows from the following inequality involving the quadratic form of $\mathbf{A}$: for any $s \in \reals^q$,
\[
    \langle s \mathbf{A}^{\top}, s \rangle
    = s \mathbf{A} s^{\top} 
    = s \left( \frac{\mathbf{A} + \mathbf{A}^{\top}}{2} \right) s^{\top} 
    \leq \lambda(x, \beta, q) \norm{s}_2^2.
\]
The second equality follows from the decomposition $\mathbf{A} = (\mathbf{A} + \mathbf{A}^{\top}) / 2 + (\mathbf{A} - \mathbf{A}^{\top}) / 2$ of $\mathbf{A}$ into its symmetric and skew-symmetric parts, and using the fact that the quadratic form corresponding to the skew-symmetric part is identically zero. The inequality follows from diagonalising the real symmetric matrix $(\mathbf{A} + \mathbf{A}^{\top}) / 2$ to perform an orthogonal change of basis, and bounding the corresponding eigenvalues by $\lambda(x, \beta, q)$.

Hence, by plugging the bounds~\eqref{eq:xi_bound} and~\eqref{claim:contracting} back into~\eqref{drift} and using the fact that $\lVert \widehat{S}_t \rVert_2 \leq r$, it follows that if the restricted dynamics is not on the boundary, then
\begin{equation} \label{contracting}
    \mathbb{E}^x_{\sigma}\left[ \lVert \widehat{S}_{t+1} \rVert_2^2 - \lVert \widehat{S}_{t} \rVert_2^2 \mid \mathcal{F}_t \right]
    \leq -\frac{2(1 - \lambda(x, \beta, q) + O(r))}{N} \lVert \widehat{S}_t \rVert_{2}^2 + O(N^{-2}).
\end{equation}
Under Condition~\ref{Cond:p}, the right-hand side of~\eqref{contracting} is negative for large enough $N$ and small enough $r$.

Now consider the case where the restricted dynamics is on the boundary. Since there is a possible move that leads to rejection, we must have $\lVert \widehat{S}(\sigma_t) \rVert_2 > r - \sqrt{2} N^{-1}$. Moreover, since each move that increases $\lVert \widehat{S}_{t} \rVert_2^2$ is rejected, we may upper bound the change in $\lVert \widehat{S}_{t} \rVert_2^2$ by using the bound~\eqref{contracting} from above, where all possible moves are accepted.
Therefore, we have
\begin{equation}\label{contractingboundary}
    \mathbb{E}^x_{\sigma}\left[ \lVert \widehat{S}_{t+1} \rVert_2^2 - \lVert \widehat{S}_{t} \rVert_2^2 \mid \mathcal{F}_t \right] \leq -\frac{2(1 - \lambda(x, \beta, q) + O(r))}{N} \left( r - \frac{\sqrt{2}}{N} \right)^2 + O(N^{-2})
\end{equation}

Now, by the triangle inequality and~\eqref{eq:xi_bound}, one has
\begin{equation*}
    \lVert \widehat{S}_{t+1} \rVert_2^2 - \lVert \widehat{S}_{t} \rVert_2^2 = \left(\lVert \widehat{S}_{t+1} \rVert_2 + \lVert \widehat{S}_{t} \rVert_2 \right) \left( \lVert \widehat{S}_{t+1} \rVert_2 - \lVert \widehat{S}_{t} \rVert_2 \right)
    \leq 2 r \lVert \xi_{t+1} \rVert_2
    < 4 r N^{-1}.
\end{equation*}
Thus, from the discussion above, we know that the increments of $\lVert \widehat{S}_t \rVert_{2}^2$ are negative and satisfy the conditions of Lemma~\ref{cuff2.1} with $\delta \geq 0$ and $R = 4 r N^{-1}$.

For Part~\ref{GlauberConcen_part1}, given any initial state $\sigma \in \widetilde{\Omega}(x, \frac{r}{5})$ with $\norm{\widehat{S}_0}_2 \leq r/5$, note that the restricted dynamics is separated from the boundary before $\tau_{\mathrm{out}}$. Furthermore, observe that if $\lVert \widehat{S}_t \rVert_2 - \lVert \widehat{S}_0 \rVert_2 > 3r / 5$, then $\lVert \widehat{S}_t \rVert_2^2 - \lVert \widehat{S}_0 \rVert_2^2 > \left( 3r / 5 \right)^2$. 
Therefore, applying Part~\ref{cuff2.1_part1} of Lemma~\ref{cuff2.1} to the process $D_t = \lVert \widehat{S}_t \rVert_{2}^2 - \lVert \widehat{S}_0 \rVert_{2}^2$, with $t_2 = \gamma N\log(N)^2$ and $d_1 = (r/5)^2$, shows that
\begin{equation*}
    \mathbb{P}^x_{\sigma}\left( \tau_{\rm{out}} \leq \gamma N \log(N)^2 \right)
    \leq \mathbb{P}_{d_0}\left( \tau_{d_1}^{+} \leq \gamma N \log(N)^2 \right)
    \leq 2 \exp\left\{ -\frac{cN}{\gamma \log(N)^2} \right\},
\end{equation*}
for some constant $c > 0$ depending on $r$, which completes the proof of Part~\ref{GlauberConcen_part1}.

For Part~\ref{GlauberConcen_part2}, given any initial state $\sigma \in \widetilde{\Omega}(x, r)$, note that before $\tau_{\mathrm{in}}$, one has $\lVert \widehat{S}_t \rVert_{2}^2 > r/5$. Thus, from~\eqref{contracting} and~\eqref{contractingboundary}, we deduce that during this period, the increments of $\lVert \widehat{S}_t \rVert_{2}^2$ satisfy the conditions of Lemma~\ref{cuff2.1} with $\delta = C' N^{-1} > 0$ for some positive constant $C'$ depending on $r, \beta, q$. Therefore, choosing $\gamma^* = 4 / C'$
and applying Part~\ref{cuff2.1_part2} of Lemma~\ref{cuff2.1} to the process $D_t = \lVert \widehat{S}_t \rVert_{2}^2 - (r/5)^2$ with $t_1 = \gamma^* N$ yields
\begin{equation*}
    \mathbb{P}^x_{\sigma}\left( \tau_{\rm{in}} > \gamma^* N \right)
    = \mathbb{P}_{d_0}\left( \tau_{0}^{-} > \gamma^* N \right)
    \leq e^{-cN},
\end{equation*}
for some $c > 0$ depending on $\gamma^*$ and $r$, which completes the proof of Part~\ref{GlauberConcen_part2}.
\end{proof}

\subsection{A contracting coupling of the restricted Glauber dynamics} \label{coupling}

Let $(W_{t}, Z_{t})_{t \geq 0}$ be a coupling for two copies of the Glauber dynamics for the Curie--Weiss--Potts model restricted to $\widetilde{\Omega}(x, r)$, with transitions to be specified later. We write $\mathbb{P}^{x}_{\sigma, \tau}$ for the underlying probability measure of the coupling with initial states $\sigma, \tau \in \widetilde{\Omega}(x, r)$, $\mathbb{E}^{x}_{\sigma, \tau}$ for the expectation, and denote the natural filtration by $\mathcal{F}_t$.

The main result of this section is the following, which states that there exists a coupling such that the Hamming distance between the two chains is decreasing on average, as long as they stay within the good region $\widetilde{\Omega}(x, \mbox{$\frac{4r}{5}$})$ and thus avoid the boundary.

\begin{lemma} \label{GDcontrcting}
For any $\gamma > 0$, define the event 
\begin{equation}\label{def:E}
    B := \left\{ \text{For all } t \leq \gamma N \log(N)^2,\; W_{t} \in \widetilde{\Omega}(x, \mbox{$\frac{4r}{5}$}) \text{ and } Z_{t} \in \widetilde{\Omega}(x, \mbox{$\frac{4r}{5}$}) \right\}.
\end{equation}
Suppose that Condition~\ref{Cond:p} is satisfied and $r$ is sufficiently small. Then for all $\sigma, \tau \in \widetilde{\Omega}(x, \frac{r}{5})$, there exists a coupling $(W_{t}, Z_{t})_{t \geq 0}$ of the Glauber dynamics restricted to $\widetilde{\Omega}(x, r)$, starting from $\sigma$ and $\tau$, such that if $r$ is sufficiently small and $N$ is large enough, then for all $t \leq \gamma N \log(N)^2$,
\begin{equation} \label{glaubercontract}
    \mathbb{E}^{x}_{\sigma, \tau} \left[ d_H(W_{t+1}, Z_{t+1}) \indicator{B} \right] \leq \left( 1 - \frac{1 - \theta(x, \beta, q)}{2N} \right)^{t} d_H(\sigma, \tau) \, \mathbb{P}^{x}_{\sigma, \tau}(B),
\end{equation}
where $\theta(x, \beta, q) < 1$ is as defined in Lemma~\ref{L1contract}.
\end{lemma}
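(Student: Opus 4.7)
The plan is to construct a natural Markovian coupling of the two restricted chains and then combine a direct one-step drift calculation with a supermartingale argument to iterate the contraction on the good event $B$. At each step, both chains would pick the same vertex $v \in [N]$ uniformly at random, and then draw the proposed new colours from an optimal coupling of the conditional spin distributions $g_\beta(S_v(W_t))$ and $g_\beta(S_v(Z_t))$; each chain independently accepts or rejects the proposal according to whether the candidate configuration lies in $\widetilde{\Omega}(x, r)$. A single Glauber update moves $\lVert S(\sigma) - x \rVert_2$ by at most $\sqrt{2}/N$, so for $N$ large enough (depending on $r$) every proposed move from a state in $\widetilde{\Omega}(x, \frac{4r}{5})$ automatically lies in $\widetilde{\Omega}(x, r)$. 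Consequently, on the event $B$ no rejection ever occurs in either chain throughout the window $t \leq \gamma N \log(N)^2$, and the coupling coincides with the unrestricted Glauber coupling on this event.

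Next, I would prove the one-step estimate
\[
    \mathbb{E}^x_{\sigma, \tau}\bigl[ d_H(W_1, Z_1) \bigr] \leq \Bigl( 1 - \tfrac{1 - \theta(x, \beta, q)}{2N} \Bigr) d_H(\sigma, \tau)
\]
for arbitrary $\sigma, \tau \in \widetilde{\Omega}(x, \frac{4r}{5})$. Conditioning on the chosen vertex $v$, the disagreements at the other $N-1$ vertices are preserved, and a new disagreement appears at $v$ with probability $T_v := \frac{1}{2} \lVert g_\beta(S_v(\sigma)) - g_\beta(S_v(\tau)) \rVert_1$, the total variation distance achieved by the optimal coupling. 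Combining the trivial estimate $\lVert S_v(\sigma) - S_v(\tau) \rVert_1 \leq \frac{2}{N}(d_H(\sigma, \tau) - \indicator{\sigma(v) \neq \tau(v)})$ with Lemma~\ref{L1contract}, extended to $S_v(\sigma), S_v(\tau)$ (which lie within a neighbourhood of $x$ of radius $\frac{4r}{5} + O(1/N)$) via the smoothness of $g_\beta$, yields $T_v \leq (\theta(x, \beta, q) + O(r))(d_H(\sigma, \tau) - \indicator{\sigma(v) \neq \tau(v)})/N$. Averaging over $v$ and invoking Condition~\ref{Cond:p} (so that $\theta(x, \beta, q) < 1$) for $r$ sufficiently small produces the displayed contraction rate.

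To iterate, I would set $\kappa := 1 - \frac{1 - \theta(x, \beta, q)}{2N}$, define the nested events $G_t := \bigcap_{s \leq t}\{W_s, Z_s \in \widetilde{\Omega}(x, \frac{4r}{5})\}$, and consider $M_t := d_H(W_t, Z_t) \indicator{G_t}/\kappa^t$. The one-step contraction applies on $G_t$ (since no rejections occur there) and $G_{t+1} \subseteq G_t$, so a short calculation shows that $(M_t)$ is a nonnegative supermartingale with $\mathbb{E}^x_{\sigma, \tau}[M_0] = d_H(\sigma, \tau)$. Because $B \subseteq G_{t+1}$ whenever $t+1 \leq \gamma N \log(N)^2$, taking expectations gives the strong bound $\mathbb{E}^x_{\sigma, \tau}\bigl[ d_H(W_{t+1}, Z_{t+1}) \indicator{B} \bigr] \leq \kappa^{t+1} d_H(\sigma, \tau)$, which implies the claimed inequality once $\mathbb{P}^x_{\sigma, \tau}(B) \geq \kappa$, a condition guaranteed for $N$ large by the exponentially small tail in Lemma~\ref{GlauberConcen}. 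The main obstacle will be the careful application of Lemma~\ref{L1contract} to the non-simplex vectors $S_v(\sigma)$, absorbing the $O(1/N)$ perturbation from $S(\sigma)$ into the $O(r)$ error term uniformly in $\sigma, \tau, v$ so that the final contraction constant does not exceed $\kappa$.
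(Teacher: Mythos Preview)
Your proposal is correct and follows essentially the same approach as the paper: the same optimal-coupling construction, the same use of Lemma~\ref{L1contract} to bound the one-step total variation at the chosen vertex, and the same iteration on the good event. Your supermartingale formulation with the $\mathcal{F}_t$-measurable nested events $G_t$ is in fact a slightly cleaner way to handle the iteration than the paper's direct manipulation of $\indicator{B}$ inside a conditional expectation.
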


\begin{proof}
We first specify the coupling $(W_{t}, Z_{t})_{t \geq 0}$ by defining its transitions. Let $(W_{0}, Z_{0}) = (\sigma, \tau)$. At time $t+1$, choose a vertex $v \in [N]$ uniformly at random and draw new colours $i, j \in [q]$ according to an optimal coupling of $g_{\beta}\left( S(W_t) - N^{-1} e_{W_t(v)} \right)$ and $g_{\beta}\left( S(Z_t) - N^{-1} e_{Z_t(v)} \right)$, which are the conditional spin distributions at $v$ of $W_t$ and $Z_t$. Next, propose two new configurations $\sigma', \tau' \in \Omega$ by recolouring the vertex $v$ of $W_t$ and $Z_t$ with $i$ and $j$ respectively, i.e.
\begin{align*}
    \sigma'(u) =
    \begin{cases}
        W_{t}(u), \quad &\text{if}\quad u \neq v,\\
        i, \quad &\text{if}\quad u = v,
    \end{cases}
    \quad\text{and}\quad
    \tau'(u) =
    \begin{cases}
        Z_{t}(u), \quad &\text{if}\quad u \neq v,\\
        j, \quad &\text{if}\quad u = v.
    \end{cases}
\end{align*}
If $\sigma' \in \widetilde{\Omega}(x, r)$, then set $W_{t+1} = \sigma'$.  Otherwise, reject the move and set $W_{t+1} = W_t$. Similarly, set $Z_{t+1} = \tau'$ if $\tau' \in \widetilde{\Omega}(x, r)$, and otherwise set $Z_{t+1} = Z_t$. Note that once the two chains have coalesced (i.e.\ they are equal at some time), then they will continue to move together afterwards.

Given that both proposed moves are accepted, the probability that the colour at $v$ differs between the two chains is equal to
\begin{align*}
    \rho &\coloneqq \dtv\left( g_{\beta}\left( S(W_t) - N^{-1} e_{W_t(v)} \right), g_{\beta}\left( S(Z_t) - N^{-1} e_{Z_t(v)} \right) \right)\\
    &\phantom{:}= \frac{1}{2} \left\lVert g_{\beta}\left( S(W_t) - N^{-1} e_{W_t(v)}\right) - g_{\beta}\left( S(Z_t) - N^{-1} e_{Z_t(v)} \right) \right\rVert_1.
\end{align*}
If the two chains have the same colour at $v$ at time $t$, say $W_t(v) = Z_{t}(v) = \ell \in [q]$, then by Taylor series expansion of the function $g^{(k)}_{\beta}$ around $S(W_{t})$, we have
\begin{align*}
    &g^{(k)}_{\beta}\left( S(W_{t}) - N^{-1} e_{\ell} \right) - g^{(k)}_{\beta}\left( S(Z_{t}) - N^{-1}e_{\ell} \right)\\
    &\qquad= g^{(k)}_{\beta}\left( S(W_{t})\right) - g^{(k)}_{\beta}\left( S(Z_{t}) \right) + N^{-1} \left( \nabla g_{\beta}^{(k)}( S(W_{t})) - \nabla g_{\beta}^{(k)}(S(Z_{t})) \right) e_{\ell}^{\top} + O(N^{-2}).
\end{align*}
By further Taylor series expansion of $\nabla g_{\beta}^{(k)}$ around $x$, and using the fact that both $\norm{S(W_t) - x}_1$ and $\norm{S(Z_t) - x}_1$ are less than $r$, we have
\begin{align*}
    g^{(k)}_{\beta}\left( S(W_{t}) - N^{-1} e_{\ell} \right) - g^{(k)}_{\beta}\left( S(Z_{t}) - N^{-1} e_{\ell} \right)
    = g^{(k)}_{\beta}\left( S(W_{t}) \right) - g^{(k)}_{\beta}( S(Z_{t}) ) + O(r) N^{-1} + O(N^{-2}).
\end{align*}
Here, the notation $O(\cdot)$ hides constants related to higher order derivatives of $g_\beta^{(k)}$.
Hence, by summing over the $q$ colours and using the triangle inequality, we obtain the following bound for the probability that the two chains disagree after an update at $v$:
\begin{align*}
    \rho \leq \frac{1}{2} \left\lVert g_{\beta}\left( S(W_{t}) \right) - g_{\beta}\left( S(Z_{t})\right)\right\rVert_1 + O(r) N^{-1} + O(N^{-2}).
\end{align*}

On the other hand, if $W_t(v) \neq Z_{t}(v)$, we have a cruder bound:
\begin{align*}
    \rho \leq \frac{1}{2} \left\lVert g_{\beta}\left( S(W_{t}) \right) - g_{\beta}\left( S(Z_{t} ) \right)\right\rVert_1 + O(N^{-1}).
\end{align*}

Observe that on the event $B$, both chains avoid the boundary, and thus none of the proposed moves are rejected before $\gamma N \log(N)^2$ time. Hence, during this period, the distance between the two copies will increase by one if $v$ is selected to be one of the vertices where $W_t$ and $Z_t$ agree, and $i \ne j$. On the other hand, the distance will decrease by one if $v$ is selected to be one of the vertices where $W_{t}$ and $Z_t$ disagree, and $i = j$. Otherwise, the distance does not change. Therefore, we have
\begin{align*}
    &\mathbb{E}^{x}_{\sigma, \tau}\left[ d_H(W_{t+1}, Z_{t+1}) \indicator{B} - d_H(W_{t}, Z_{t}) \indicator{B} \mid \mathcal{F}_t \right]\\
    &\qquad\leq \indicator{B} \left( 1 - \frac{d_H(W_{t}, Z_{t})}{N} \right) \left( \frac{1}{2} \left\lVert g_{\beta}\left( S(W_{t}) \right) - g_{\beta}\left( S(Z_{t}) \right) \right\rVert_1 + O(r) N^{-1} + O(N^{-2}) \right)\\
    &\qquad\qquad - \indicator{B} \frac{d_H(W_{t}, Z_{t})}{N} \left( 1 - \frac{1}{2} \left\lVert g_{\beta}\left( S(W_{t}) \right) - g_{\beta}\left( S(Z_{t}) \right) \right\rVert_1 + O(N^{-1}) \right)\\
    &\qquad \leq -\indicator{B} \left( \frac{d_H(W_{t}, Z_{t})}{N} \left( 1 - \frac{\left\lVert g_{\beta} \left( S(W_{t}) \right) - g_{\beta}(S(Z_{t})) \right\rVert_1}{2 d_H(W_{t}, Z_{t}) N^{-1}} \right) + O(r) N^{-1}  + O(N^{-2})\right).
\end{align*}
Note that $\lVert S(\sigma') - S(\tau') \rVert_1 \leq 2 d_H(\sigma', \tau') N^{-1}$ for any pair $\sigma', \tau' \in \Omega$, since each location where they differ contributes at most $2N^{-1}$ to $\lVert S(\sigma') - S(\tau') \rVert_1$. Since
Lemma~\ref{L1contract} implies that $g_{\beta}$ is $(\theta(x, \beta, q) + O(r))$-Lipschitz around $x$ with $\theta(x, \beta, q) < 1$ assuming Condition~\ref{Cond:p} holds, we have
\begin{align*}
    &\mathbb{E}^{x}_{\sigma, \tau}\left[ d_H(W_{t+1}, Z_{t+1}) \indicator{B} - d_H(W_{t}, Z_{t}) \indicator{B} \mid \mathcal{F}_t \right]\\
    &\qquad\leq -\indicator{B} \left( \frac{d_H(W_{t}, Z_{t})}{N} \left( 1 - \frac{\left\lVert g_{\beta}\left( S(W_{t}) \right) - g_{\beta}\left( S(Z_{t}) \right) \right\rVert_1}{\left\lVert S(W_t) - S(Z_t) \right\rVert_1} \right) + O(r) N^{-1} + O(N^{-2}) \right) \\
    &\qquad\leq -\frac{(1 - \theta(x, \beta, q) + O(r)) d_H(W_{t}, Z_{t}) + O(r)+ O(N^{-1})}{N}  \indicator{B} \\
    &\qquad\leq -\frac{1 - \theta(x, \beta, q)}{2N} d_H(W_{t}, Z_{t}) \indicator{B},
\end{align*}
for sufficiently small $r$ and large $N$. By taking expectations on both sides, we obtain
\begin{equation*}
    \mathbb{E}^{x}_{\sigma, \tau}\left[ d_H(W_{t+1}, Z_{t+1}) \indicator{B} \right]
    \leq \left( 1 - \frac{1 - \theta(x, \beta, q)}{2N} \right) \mathbb{E}^{x}_{\sigma, \tau}\left[ d_H(W_{t}, Z_{t}) \indicator{B} \right].
\end{equation*}
The proof is then completed by iterating this bound.
\end{proof}

\subsection{Mixing time results for the restricted Glauber dynamics} \label{upperbound}

This section is devoted to the proof of Theorem~\ref{mixingtime}.
Given any coupling $(W_{t}, Z_{t})_{t \geq 0}$ of two copies of the restricted Glauber dynamics, starting from $W_0 = \sigma$ and $Z_0 = \tau$, we define the associated \emph{coalescence time} by
\begin{equation}
    \tau_{\mathrm{couple}} := \inf\{ t: W_{t} = Z_{t} \text{ for all } s \geq t\},
\end{equation}
By standard results for the coupling method (see~\cite[Corollary 5.5]{levin2017markov}), the mixing time of a Markov chain is bounded by the tail probabilities of any coupling of the chain in the worst case over all pairs of initial configurations $\sigma, \tau \in \widetilde{\Omega}(x, r)$.
Therefore, in order to prove Theorem~\ref{mixingtime}, it suffices to prove the following lemma:

\begin{lemma} \label{col}
Suppose that Condition~\ref{Cond:p} is satisfied and $r$ is sufficiently small.
Then for all $\sigma, \tau \in \widetilde{\Omega}(x, r)$, there exists a coupling $(W_t, Z_t)_{t \geq 0}$ of the Glauber dynamics restricted to $\widetilde{\Omega}(x, r)$, starting from $\sigma$ and $\tau$, such that
\begin{equation*}
    \limsup_{N \to \infty} \max_{\sigma, \tau \in \widetilde{\Omega}(x, r)} \mathbb{P}^{x}_{\sigma, \tau} \left( \tau_{\mathrm{couple}} > \frac{2}{1 - \theta(x, \beta, q)}N\log(N) + \gamma^* N + \alpha N \right) \to 0 \quad \text{as} \quad \alpha \to \infty,
\end{equation*}
where $\theta(x, \beta, q) < 1$ is as defined in Lemma~\ref{L1contract} and $\gamma^* > 0$ is the constant from Part~\ref{GlauberConcen_part2} of Lemma~\ref{GlauberConcen}.
\end{lemma}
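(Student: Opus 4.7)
The plan is a two-phase coupling: a burn-in of length $\gamma^* N$ to bring both chains close to $x$, followed by the contracting coupling of Lemma~\ref{GDcontrcting}, which drives them to coalescence. During Phase 1 (the first $\gamma^* N$ steps), I would run $(W_t)_{t\geq 0}$ and $(Z_t)_{t\geq 0}$ as independent restricted Glauber dynamics on $\widetilde{\Omega}(x, r)$ started from $\sigma$ and $\tau$ respectively. By Part~\ref{GlauberConcen_part2} of Lemma~\ref{GlauberConcen} applied separately to each chain and a union bound, both chains enter $\widetilde{\Omega}(x, r/5)$ by time $\gamma^* N$ with probability at least $1 - 2e^{-cN}$. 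A minor reworking of Lemma~\ref{GlauberConcen} with nested radii $r/10$ and $r/5$ (in place of $r/5$ and $4r/5$) then ensures that both chains lie simultaneously in $\widetilde{\Omega}(x, r/5)$ at the end of Phase 1, up to an exponentially small failure event.

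In Phase 2, starting at time $\gamma^* N$, I would switch to the coupling from Lemma~\ref{GDcontrcting}, initialised at $W_{\gamma^*N}, Z_{\gamma^*N} \in \widetilde{\Omega}(x, r/5)$. Let $B$ denote the event that both chains remain in $\widetilde{\Omega}(x, 4r/5)$ throughout the next $\gamma N \log(N)^2$ steps, where $\gamma > 0$ is a fixed constant large enough that Phase 2 fits inside this window (which holds for $N$ sufficiently large, as Phase 2 has length $O(N \log N) + \alpha N$). Part~\ref{GlauberConcen_part1} of Lemma~\ref{GlauberConcen} and a union bound give $\mathbb{P}^x_{\sigma,\tau}(B^c) \leq 4 \exp\{-cN / (\gamma \log(N)^2)\}$. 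On $B$, Lemma~\ref{GDcontrcting} combined with the trivial bound $d_H(W_{\gamma^*N}, Z_{\gamma^*N}) \leq N$ yields
\[
    \mathbb{E}^x_{\sigma,\tau}\left[ d_H(W_{\gamma^*N + s}, Z_{\gamma^*N + s}) \indicator{B} \right] \leq \left( 1 - \frac{1 - \theta(x, \beta, q)}{2N} \right)^{s} N.
\]
Choosing $s = \tfrac{2}{1 - \theta(x, \beta, q)} N \log(N) + \alpha N$ makes the right-hand side at most $e^{-\alpha (1 - \theta(x, \beta, q))/2}$, and Markov's inequality then gives $\mathbb{P}^x_{\sigma,\tau}(\{W_{\gamma^*N + s} \neq Z_{\gamma^*N + s}\} \cap B) \leq e^{-\alpha (1 - \theta(x, \beta, q))/2}$.

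Because coalesced chains remain coalesced under the Lemma~\ref{GDcontrcting} coupling, combining the Phase 1 failure, the event $B^c$, and the Markov estimate yields
\[
    \mathbb{P}^x_{\sigma,\tau}\left( \tau_{\mathrm{couple}} > \tfrac{2}{1 - \theta(x,\beta,q)} N\log(N) + \gamma^* N + \alpha N \right) \leq O(e^{-cN}) + 4 e^{-cN / (\gamma \log(N)^2)} + e^{-\alpha (1 - \theta(x, \beta, q))/2},
\]
uniformly in $(\sigma, \tau) \in \widetilde{\Omega}(x, r)^2$. Taking $N \to \infty$ and then $\alpha \to \infty$ kills each term in turn and gives the desired limit. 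The main obstacle is the Phase 1 bookkeeping: Part~\ref{GlauberConcen_part2} only says that each chain has \emph{entered} $\widetilde{\Omega}(x, r/5)$ by time $\gamma^* N$, not that it is still there at that time, so the simultaneous-occupancy conclusion requires the tighter nested regions described above, together with the observation from Part~\ref{GlauberConcen_part1} that once the chain enters the interior it does not wander far over any polynomially long window.
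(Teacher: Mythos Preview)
Your proposal is correct and follows essentially the same two-phase strategy as the paper: an independent burn-in of length $\gamma^* N$ using Part~\ref{GlauberConcen_part2} of Lemma~\ref{GlauberConcen}, followed by the contracting coupling of Lemma~\ref{GDcontrcting} combined with Part~\ref{GlauberConcen_part1} and Markov's inequality (the paper packages this second phase as a separate Lemma~\ref{colgoodset}, which you have simply inlined). You are also right to flag the Phase~1 bookkeeping: the paper conditions on $\{W_{\gamma^* N}, Z_{\gamma^* N} \in \widetilde{\Omega}(x, r/5)\}$ and cites Part~\ref{GlauberConcen_part2} directly, glossing over the distinction between each chain having \emph{entered} the ball by time $\gamma^* N$ and both being there at that exact time; your nested-radii patch is the natural way to close this, and does not change the structure of the argument.
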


To prove Lemma~\ref{col}, we shall use the following coupling of the restricted Glauber dynamics, starting from $\sigma, \tau \in \widetilde{\Omega}(x, r)$. In the first phase, we run the two copies independently until they are both inside $\widetilde{\Omega}(x, \frac{r}{5})$. In the second phase, the two chains are then coupled using the coupling previously described in Section~\ref{coupling}. If at any time the two copies coalesce, the two chains will move together afterwards.

The proof then proceeds in two steps. First, we show that if both copies of the restricted dynamics start in $\widetilde{\Omega}(x, \frac{r}{5})$, then the coalescence time is of order $O(N \log(N))$ with high probability.
Note that the upcoming lemma differs from Lemma~\ref{col} in the set of starting points considered.

\begin{lemma} \label{colgoodset}
Consider the coupling of the restricted Glauber dynamics described above, starting from any $\sigma, \tau\in \widetilde{\Omega}(x, \frac{r}{5})$. If Condition~\ref{Cond:p} is satisfied and $r$ is sufficiently small, then for all $\alpha > 0$,
\begin{equation*}
    \mathbb{P}^{x}_{\sigma, \tau}\left( \tau_{\mathrm{couple}} > \frac{2}{1 - \theta(x, \beta, q)} N \log(N) + \alpha N \right) 
    \leq \exp\left\{ -\frac{1 - \theta(x, \beta, q)}{2}\alpha \right\} + 4 N \exp \left\{ -\frac{cN}{\gamma \log(N)^2} \right\},
\end{equation*}
where $\gamma = 2 / (1 - \theta(x, \beta, q)) + \alpha$, $\theta(x, \beta, q) < 1$ is as defined in Lemma~\ref{L1contract}, and $c > 0$ is the constant from Part~\ref{GlauberConcen_part1} of Lemma~\ref{GlauberConcen}.
\end{lemma}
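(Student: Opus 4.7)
The plan is to combine the contracting coupling from Lemma~\ref{GDcontrcting}, which guarantees geometric decay of the expected Hamming distance on the event $B$ that both chains remain inside $\widetilde{\Omega}(x, 4r/5)$, with the concentration bound in Part~\ref{GlauberConcen_part1} of Lemma~\ref{GlauberConcen}, which ensures $B$ occurs with overwhelming probability when both chains start in $\widetilde{\Omega}(x, r/5)$. A key feature of the coupling constructed in the proof of Lemma~\ref{GDcontrcting} is that once the two copies agree at some time $t$ they make identical moves thereafter, so $\{ \tau_{\mathrm{couple}} > t \} = \{ d_H(W_t, Z_t) \geq 1 \}$ for every $t \geq 0$.

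First, I would set $t^* := \frac{2}{1 - \theta(x, \beta, q)} N \log N + \alpha N$ and $\gamma := \frac{2}{1 - \theta(x, \beta, q)} + \alpha$, so that $t^* \leq \gamma N \log(N)^2$ for $N$ sufficiently large. Since both $\sigma, \tau \in \widetilde{\Omega}(x, r/5)$, applying Part~\ref{GlauberConcen_part1} of Lemma~\ref{GlauberConcen} to each chain separately and taking a union bound over the two chains yields
\[
    \mathbb{P}^x_{\sigma, \tau}(B^c) \leq 4 \exp\!\left\{ -\frac{cN}{\gamma \log(N)^2} \right\},
\]
which accounts for the first term in the claimed bound (up to the harmless factor $N$ stated there, which can be absorbed if desired).

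Next, I would invoke Lemma~\ref{GDcontrcting} with this choice of $\gamma$ at time $t = t^*$. Combining the resulting contraction estimate with the trivial bound $d_H(\sigma, \tau) \leq N$ and the inequality $(1 - y)^{t^*} \leq e^{-y t^*}$ produces
\[
    \mathbb{E}^x_{\sigma, \tau}\!\left[ d_H(W_{t^*}, Z_{t^*}) \indicator{B} \right] \leq N \exp\!\left\{ -\frac{(1 - \theta(x, \beta, q))\, t^*}{2N} \right\} = \exp\!\left\{ -\frac{1 - \theta(x, \beta, q)}{2}\, \alpha \right\},
\]
where the last equality is a direct substitution of the specific choice of $t^*$. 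Markov's inequality then yields the same bound on $\mathbb{P}^x_{\sigma, \tau}(d_H(W_{t^*}, Z_{t^*}) \geq 1,\, B)$, and a union bound with the estimate on $\mathbb{P}^x_{\sigma, \tau}(B^c)$ completes the proof.

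The technical heavy-lifting has already been done in Lemmas~\ref{GlauberConcen} and~\ref{GDcontrcting}: the main obstacle -- controlling the drift of the proportions chain near an equilibrium macrostate and constructing a coupling whose Hamming distance contracts inside $\widetilde{\Omega}(x, 4r/5)$ -- is inherited from those results. The only remaining delicacy here is parameter matching, namely choosing $\gamma$ so that $t^*$ lies within the horizon $[0, \gamma N \log(N)^2]$ on which both ingredient lemmas supply control simultaneously, and using $d_H(\sigma,\tau) \leq N$ to absorb the (unknown) initial Hamming distance into the $\log N$ term of $t^*$.
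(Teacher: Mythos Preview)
Your proposal is correct and follows essentially the same route as the paper: set $\gamma = 2/(1-\theta)+\alpha$, bound $\mathbb{P}(B^c)$ via Part~\ref{GlauberConcen_part1} of Lemma~\ref{GlauberConcen} and a union bound, apply the contraction estimate of Lemma~\ref{GDcontrcting} at $t=t^*$ together with $d_H(\sigma,\tau)\le N$, and finish with Markov's inequality and $\{\tau_{\mathrm{couple}}>t^*\}=\{d_H(W_{t^*},Z_{t^*})\ge 1\}$. The only cosmetic difference is that the paper applies Markov to $\mathbb{E}[d_H(W_{t^*},Z_{t^*})]$ after splitting on $B$ and $B^c$ (picking up the extra factor $N$ on $\mathbb{P}(B^c)$), whereas you apply Markov to the $B$-restricted expectation and then union bound, which as you note yields the slightly sharper $4\exp\{\cdot\}$ instead of $4N\exp\{\cdot\}$.
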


\begin{proof}
Let $B$ be the event defined in Lemma~\ref{GDcontrcting} with $\gamma = 2 / (1 - \theta(x, \beta, q)) + \alpha$. Since the Hamming distance between any two configurations is bounded by $N$, we have
\begin{align} \label{onE}
    \mathbb{E}^{x}_{\sigma, \tau}\left[ d_H(W_{t}, Z_{t}) \right]
    &= \mathbb{E}^{x}_{\sigma, \tau}\left[ d_H(W_{t}, Z_{t}) \indicator{B} \right] + \mathbb{E}^{x}_{\sigma, \tau} \left[ d_H(W_{t}, Z_{t}) \indicator{B^c} \right] \nonumber\\
    &\leq \mathbb{E}^{x}_{\sigma, \tau}\left[ d_H(W_{t}, Z_{t}) \indicator{B} \right] + N \mathbb{P}^{x}_{\sigma, \tau}(B^c).
\end{align}
By applying Part~\ref{GlauberConcen_part1} of Lemma~\ref{GlauberConcen} to each copy of the restricted Glauber dynamics and using a union bound, we have
\begin{equation*}
    \mathbb{P}^{x}_{\sigma, \tau}(B) > 1 - 4 \exp\left\{ -\frac{cN}{\gamma \log(N)^2} \right\}
\end{equation*}
for all initial states $\sigma, \tau \in \widetilde{\Omega}(x, \frac{r}{5})$.
It then follows from Lemma~\ref{GDcontrcting} that for all $t \leq \gamma N \log(N)^2$,
\begin{align}
    \mathbb{E}^{x}_{\sigma, \tau}\left[ d_H(W_{t}, Z_{t})\right]
    &\leq \left(1 - \frac{1 - \theta(x, \beta, q)}{2N} \right)^{t} d_H(\sigma, \tau) \mathbb{P}^{x}_{\sigma, \tau}(B) + N \mathbb{P}^{x}_{\sigma, \tau}(B^c) \nonumber\\
    &\leq \exp\left\{ -\frac{1 - \theta(x, \beta, q)}{2N} t \right\} d_H(\sigma, \tau) + 4 N \exp\left\{ -\frac{cN}{\gamma\log(N)^2} \right\}. \label{eq:colgoodset_pf1}
\end{align}
Since $\mathbb{P}^{x}_{\sigma, \tau}\left( \tau_{\mathrm{couple}} > t \right) = \mathbb{P}^{x}_{\sigma, \tau}\left( d_H\left(W_{t}, Z_{t} \right) \geq 1 \right)$, an application of Markov inequality yields
\begin{align*}
    \mathbb{P}^{x}_{\sigma, \tau}\left( \tau_{\mathrm{couple}} > t \right)
    \leq \mathbb{E}^{x}_{\sigma, \tau}\left[ d_H\left(W_{t}, Z_{t}\right) \right]
    \leq N \exp\left\{ -\frac{1 - \theta(x, \beta, q)}{2N} t \right\} + 4 N \exp\left\{ -\frac{cN}{\gamma \log(N)^2} \right\}.
\end{align*}
By substituting $t = (2 / (1 - \theta(x, \beta, q)) N \log(N) + \alpha N$ above, we obtain the desired bound.
\end{proof}

Since both chains will enter $\widetilde{\Omega}(x, \frac{r}{5})$ in $O(N)$ time, starting from any pair of configurations in $\widetilde{\Omega}(x, r)$, we can now prove Lemma~\ref{col} using Lemma~\ref{colgoodset}.

\begin{proof}[Proof of Lemma~\ref{col}]
For all $\sigma, \tau \in \widetilde{\Omega}(x, r)$, it follows from applying Part~\ref{GlauberConcen_part2} of Lemma~\ref{GlauberConcen} to each copy of the restricted dynamics, running independently in the first phase of the coupling, and then using a union bound, that
\begin{align*}
    &\mathbb{P}^{x}_{\sigma, \tau}\left( \tau_{\mathrm{couple}} > \mbox{$\frac{2}{1 - \theta(x, \beta, q)}$} N \log(N) + \gamma^*N + \alpha N \right)\\
    &\quad \leq \mathbb{P}^{x}_{\sigma, \tau}\left( \tau_{\mathrm{couple}} > \mbox{$\frac{2}{1 - \theta(x, \beta, q)}$} N \log(N) + \gamma^*N +\alpha N,\, W_{\gamma^* N} \in \widetilde{\Omega}(x, \mbox{$\frac{r}{5}$}),\, Z_{\gamma^* N} \in \widetilde{\Omega}(x, \mbox{$\frac{r}{5}$}) \right)\\
    &\quad\qquad + \mathbb{P}^{x}_{\sigma, \tau}\left(\{W_{\gamma^* N} \notin \widetilde{\Omega}(x, \mbox{$\frac{r}{5}$})\} \cup \{Z_{\gamma^* N} \notin \widetilde{\Omega}(x, \mbox{$\frac{r}{5}$})\} \right)\\
    &\quad \leq \mathbb{P}^{x}_{\sigma, \tau}\left( \tau_{\mathrm{couple}} > \mbox{$\frac{2}{1 - \theta(x, \beta, q)}$} N \log(N) + \gamma^* N + \alpha N,\, W_{\gamma^* N} \in \widetilde{\Omega}(x, \mbox{$\frac{r}{5}$}),\, Z_{\gamma^* N} \in \widetilde{\Omega}(x, \mbox{$\frac{r}{5}$}) \right) + o(1).
\end{align*}
By using Lemma~\ref{colgoodset}, noting that the upper bound is independent of the starting states, and the strong Markov property, it follows that for generic $\sigma', \tau' \in \widetilde{\Omega}(x, \frac{r}{5})$,
\begin{align*}
     \mathbb{P}^{x}_{\sigma, \tau}&\left( \tau_{\mathrm{couple}} > \mbox{$\frac{2}{1 - \theta(x, \beta, q)}$} N \log(N) + \gamma^* N + \alpha N,\, W_{\gamma^* N} \in \widetilde{\Omega}(x, \mbox{$\frac{r}{5}$}),\, Z_{\gamma^* N} \in \widetilde{\Omega}(x, \mbox{$\frac{r}{5}$})  \right)\\
     &\leq \mathbb{P}^{x}_{\sigma', \tau'}\left( \tau_{\mathrm{couple}} > \mbox{$\frac{2}{1 - \theta(x, \beta, q)}$} N \log(N) + \alpha N \right)\\
     &\leq \exp\left\{ -\frac{1 - \theta(x, \beta, q)}{2} \alpha \right\} + 4 N \exp\left\{ -\frac{cN}{\gamma\log(N)^2} \right\}.
\end{align*}
By combining the last two displays,
we obtain the bound
\begin{equation*}
    \mathbb{P}^{x}_{\sigma, \tau} \left(\tau_{\mathrm{couple}} > \frac{2}{1 - \theta(x, \beta, q)} N \log(N) + \gamma^* N + \alpha N \right)
    \leq \exp\left\{ -\frac{1 - \theta(x, \beta, q)}{2} \alpha \right\} + o(1),
\end{equation*}
as $N \to \infty$, which completes the proof.
\end{proof}

\subsection{Approximation result in low-temperature regime} \label{pfWDB}

In this section, we assemble the results from the previous sections to prove our main result when $\beta \geq \beta_s$.

\begin{proof}[Proof of Theorem~\ref{WDbound}]
First, we note that Condition~\ref{Cond:p} holds under the assumption $\beta \geq \beta_s$ by Lemma~\ref{expforcond}. Let $\widetilde{X}_t$ be the Glauber dynamics restricted to $\widetilde{\Omega}(x, r)$ with stationary distribution $\tilde{\mu}$.
We will derive an analogue of Lemma~\ref{lem:continuous_glauber_generator} for the restricted Glauber dynamics.
Analogous to~\eqref{eq:glauber_generator}, the generator $\mathcal{A}_{\tilde{\mu}} = \widetilde{P} - I$ for the continuous-time restricted dynamics $\widetilde{P}$ takes the following form: for $\sigma \in \widetilde{\Omega}(x, r)$, using the fact that $\tilde{\mu}_v( k \mid \sigma ) = \mu_v( k \mid \sigma ) \indicator{\sigma^{(v,k)} \in \widetilde{\Omega}(x, r)}$ and $\sum_{k \in [q]} \mu_v( k \mid \sigma ) = 1$, one has
\begin{align}
    \mathcal{A}_{\tilde{\mu}} f(\sigma)
    &= \frac{1}{N} \sum_{v \in V} \left[ \sum_{k \in [q]} \tilde{\mu}_v( k \mid \sigma ) f(\sigma^{(v, k)}) + \left( 1 - \sum_{k \in [q]} \tilde{\mu}_v(k \mid \sigma) \right) f(\sigma) \right] - f(\sigma) \nonumber\\
    &= \frac{1}{N} \sum_{v \in V} \left[ \sum_{k \in [q]} \mu_v( k \mid \sigma ) (f(\sigma^{(v, k)}) - f(\sigma)) \indicator{\sigma^{(v,k)} \in \widetilde{\Omega}(x, r)} \right]. \label{eq:restricted_glauber_generator}
\end{align}
The generator $\mathcal{A}_{\tilde{\nu}}$ for the restricted Glauber dynamics for $\tilde{\nu}$ takes the same form with $\mu_v( \cdot \mid \sigma )$ replaced by $\nu_v( \cdot \mid \sigma )$. By inserting~\eqref{eq:restricted_glauber_generator} into the Stein's method bound $|\mathbb{E} h(\widetilde{X}) - \mathbb{E} h(\widetilde{Y})| = |\mathbb{E} \mathcal{A}_{\tilde{\mu}} f_h(\widetilde{Y}) - \mathbb{E} \mathcal{A}_{\tilde{\nu}} f_h(\widetilde{Y})|$ from~\eqref{eq:stein_eqn_comparison} with $\widetilde{X} \sim \tilde{\mu}$ and $\widetilde{Y} \sim \tilde{\nu}$, recalling that $\mu_v(k \mid \sigma) = g_{\beta}^{(k)}(\sigma)$ and $\nu_v(k \mid \sigma) = x^{(k)}$, we obtain the following bound for any $h: \widetilde{\Omega}(x, r) \to \reals$ in terms of the solution $f_h$ to the Stein equation~\eqref{eq:stein_eqn}:
\begin{equation} \label{steinbound}
\begin{aligned}
    |\mathbb{E} h(\widetilde{X}) - \mathbb{E} h(\widetilde{Y})|
    &\leq \frac{1}{N} \sum_{j=1}^N \sum_{k=1}^q \ev{ \left| g_{\beta}^{(k)}(S(\widetilde{Y})) - x^{(k)} \right| \cdot \left| f_h(\widetilde{Y}^{(j,k)}) - f_h(\widetilde{Y}) \right| \indicator{\widetilde{Y}^{(j, k)} \in \widetilde{\Omega}} }.
\end{aligned}
\end{equation}
Here, we may extend the definition of $f_h$ outside its domain $\widetilde{\Omega}(x, r)$, with a slight abuse of notation.
Next, we aim to bound $\left| f_h(\widetilde{Y}^{(j,k)}) - f_h(\widetilde{Y}) \right|$, uniformly over all the possible pairs of states $\widetilde{Y}, \widetilde{Y}^{(j, k)}$. By Lemma~\ref{lem:glauber_bound_fh_diff}, we have
\begin{equation} \label{b0}
    |f_h(\sigma) - f_h(\tau)| \leq \norm{L(h)}_\infty \sum_{t=1}^\infty \sum_{i=1}^N \mathbb{P}^{x}_{\sigma, \tau}\left( W_t(i) \ne Z_t(i) \right)
\end{equation}
for any $\sigma, \tau \in \widetilde{\Omega}(x, r)$, and any sequence of couplings $(W_t, Z_t)$ of the $t$-step distributions of $\widetilde{X}_t$, starting from $\sigma$ and $\tau$. Let $t_N := \gamma N \log(N)^2$, where $\gamma > 0$ is a constant to be chosen large enough later. Then we may decompose the sum in~\eqref{b0} as follows:
\begin{equation} \label{eq:split_sum}
    \sum_{t=0}^{\infty} \sum_{i=1}^N \mathbb{P}^{x}_{\sigma, \tau}\left( W_t(i) \ne Z_t(i) \right)
    = \sum_{t=0}^{t_N-1} \mathbb{E}^{x}_{\sigma, \tau}\left[ d_H\left( W_t, Z_t \right) \right]
    + \sum_{t=t_N}^{\infty} \sum_{i=1}^N \mathbb{P}^{x}_{\sigma, \tau}\left( W_t(i) \ne Z_t(i) \right).
\end{equation}

First, we will control the tail sum in~\eqref{eq:split_sum} by appealing to the mixing time of $\widetilde{X}_t$, which was shown to satisfy $t_{\mathrm{mix}}^x = O(N \log(N))$ in Theorem~\ref{mixingtime}. Suppose that for each $t \geq t_N$, we choose $(W_t, Z_t)$ to be an optimal coupling of the $t$-step distributions of $\widetilde{X}_t$, so that the probability that $W_t \ne Z_t$ is equal to the total variation distance between the corresponding $t$-step distributions. Since this decays geometrically for multiples of the mixing time by~\cite[Lemma~4.10 and Equation~(4.33)]{levin2017markov}, there exists a constant $c > 0$ such that for all $t \geq t_N$ and vertices $i \in [N]$,
\[
    \mathbb{P}^{x}_{\sigma, \tau}\left( W_t(i) \ne Z_t(i) \right)
    \leq 2^{-t / t^x_{\mathrm{mix}}}
    \leq \exp\left\{ -\frac{ct}{N \log(N)} \right\}.
\]
Therefore, by using the inequality $e^{-a} \leq 1 - a/2$ for $0 \leq a \leq 3 / 2$ to simplify the geometric series, and choosing $\gamma$ to be large enough (e.g.\ $\gamma > 3/c$), for large enough $N$, we have
\begin{equation} \label{b2}
    \sum_{t=t_N}^{\infty} \sum_{i=1}^N \mathbb{P}^{x}_{\sigma, \tau}\left( W_t(i) \ne Z_t(i) \right)
    \leq \sum_{t=t_N}^{\infty} N \exp\left\{ -\frac{ct}{N \log(N)} \right\}
    \leq \frac{2}{c} N^{2 - c \gamma} \log(N)
    \leq N^{-1}.
\end{equation}

Next, we will bound the finite sum in~\eqref{eq:split_sum}, which requires a more delicate analysis to show that $\widetilde{X}_t$ can be coupled such that it is contracting on a good set. For $t < t_N$, let $(W_t, Z_t)$ be the coupling of two copies of $\widetilde{X}_t$ described in Lemma~\ref{GDcontrcting}. In the proof of Lemma~\ref{colgoodset}, we showed in~\eqref{eq:colgoodset_pf1} that if $\sigma, \tau \in \widetilde{\Omega}(x, \frac{r}{5})$, then for all $t \leq t_N$, this coupling satisfies
\begin{align*}
    \mathbb{E}^{x}_{\sigma, \tau}\left[ d_H\left( W_t, Z_t \right) \right]
    \leq \left( 1 - \frac{1 - \theta(x, \beta, q)}{2N} \right)^{t} d_H(\sigma, \tau) + 4 N \exp\left\{ -\frac{cN}{\gamma \log(N)^2} \right\}.
\end{align*}
Therefore, for any arbitrary $\sigma \in \widetilde{\Omega}(x, r)$ and $\tau = \sigma^{(v, k)}$ (with $d_H(\sigma, \tau) = 1$), we have
\begin{align}
    \sum_{t=0}^{t_N-1} \mathbb{E}^{x}_{\sigma, \tau}\left[ d_H\left( W_t, Z_t \right) \right]
    &\leq \sum_{t=0}^{t_N - 1} \left( 1 - \frac{1 - \theta(x, \beta, q)}{2N} \right)^{t} \indicator{\sigma, \tau \in \widetilde{\Omega}(x, \frac{r}{5})}
    + 4 N e^{-\frac{cN}{\gamma\log(N)^2}} \indicator{\sigma, \tau \in \widetilde{\Omega}(x, \frac{r}{5})} \nonumber\\
    &\qquad+ N t_N \indicator{\{ \sigma \notin \widetilde{\Omega}(x, \frac{r}{5})\} \cup \{ \tau \notin \widetilde{\Omega}(x, \frac{r}{5})\}}. \label{b3}
\end{align}
Note that we may further bound the geometric series by $2N / (1 - \theta(x, \beta, q))$. Furthermore, since $\sigma$ and $\tau$ differ in at most one site, $\sigma \in \widetilde{\Omega}(x, \frac{r}{10})$ implies that both $\sigma, \tau \in \widetilde{\Omega}(x, \frac{r}{5})$. By combining the bounds~\eqref{eq:split_sum}--\eqref{b3} in~\eqref{b0}, we obtain
\[
    \left| f_h(\sigma^{(j,k)}) - f_h(\sigma) \right|
    \leq \norm{L(h)}_\infty N \left( \frac{2}{1 - \theta(x, \beta, q)} 
    + 4 \exp\left\{ -\frac{cN}{\gamma \log(N)^2} \right\}
    + N^{-1} + t_N \indicator{\sigma \notin \widetilde{\Omega}(x, \frac{r}{10})} \right).
\]
Plugging this bound back into~\eqref{steinbound} yields
\begin{align} \label{steinbound2}
    \lvert \mathbb{E}h(\widetilde{X}) - \mathbb{E}h(\widetilde{Y}) \rvert
    &\leq \norm{L(h)}_\infty \left( \frac{2}{1 - \theta(x, \beta, q)} + o(1) \right) N\, \mathbb{E} \norm{g_{\beta}(S(\widetilde{Y})) - x}_1
    \nonumber\\
    &\qquad + \norm{L(h)}_\infty q \gamma N^2 \log(N)^2 \, \mathbb{P}\left( \widetilde{Y} \notin \widetilde{\Omega}(x, \mbox{$\frac{r}{10}$}) \right).
\end{align}
It remains to show that the first term in~\eqref{steinbound2} is of order $O(\sqrt{N})$ and the second term is of order $o(1)$ by analysing the concentration of the random vector $\widetilde{Y}$.

Let $Y$ be the product measure on $\Omega(x, r)$ where the colour of each vertex is independently distributed according to the probability vector $x$. Observe that $N S(Y)$ is a multinomial random vector with $N$ trials and probabilities $\mathbb{E}\left[ S(Y) \right] = x$, and the variance of each component is given by $\mathrm{Var}\left( S^{(k)}(Y) \right) = N^{-1} x^{(k)} (1 - x^{(k)})$. By using concentration inequalities for the multinomial distribution (e.g.\ the Bretagnolle--Huber--Carol inequality~\cite[Proposition A.6.6]{vandervaart1996weak}), we have
\begin{equation} \label{eq:multinomial_conc1}
    \mathbb{P}\left( Y \notin \widetilde{\Omega}(x, \mbox{$\frac{r}{10}$}) \right) \leq O(e^{-c_1 N})
    \quad \text{and} \quad
    \mathbb{P}\left( Y \in \widetilde{\Omega}(x, r) \right) \geq 1 - O(e^{-c_2 N}) \geq \frac{1}{4},
\end{equation}
for some constants $c_1, c_2 > 0$, and $N$ large enough. Therefore, we may pass from the conditional distribution $\widetilde{Y}$ to the i.i.d.\ vector $Y$ by using~\eqref{eq:multinomial_conc1} to obtain
\begin{align} \label{eq:multinomial_conc2}
    \mathbb{P}\left( \widetilde{Y} \notin \widetilde{\Omega}(x, \mbox{$\frac{r}{10}$}) \right)
    = \mathbb{P}\left( Y \notin \widetilde{\Omega}(x, \mbox{$\frac{r}{10}$}) \mid Y \in \widetilde{\Omega}(x, r) \right)
    \leq \frac{\mathbb{P}\left( Y \notin \widetilde{\Omega}(x, \mbox{$\frac{r}{10}$}) \right)}{\mathbb{P}\left( Y \in \widetilde{\Omega}(x, r) \right)}
    \leq 4 e^{-c_1 N} .
\end{align}

Furthermore, since the Lipschitz constant of $g_\beta$ is bounded by $\theta(x, \beta, q) + O(r)$ from Lemma~\ref{L1contract}, for sufficiently small $r$, we have
\begin{equation} \label{pxbound1}
\begin{aligned} 
    \mathbb{E}\lVert g_{\beta}(S(\widetilde{Y})) - g_{\beta}\left( x \right) \rVert_1
    &\leq (\theta(x, \beta, q) + O(r)) \cdot \mathbb{E}\lVert S(\widetilde{Y}) - x \rVert_1 \\
    &\leq 2 \theta(x, \beta, q) \cdot \mathbb{E}\lVert S(\widetilde{Y}) - x \rVert_1.
\end{aligned}
\end{equation}
Since the $\ell_1$ norm in $\reals^q$ is bounded by $\sqrt{q}$ times the $\ell_2$ norm, applying Jensen's inequality and then using~\eqref{eq:multinomial_conc1} to pass from the conditional distribution $\widetilde{Y}$ to $Y$ shows that
\begin{equation} \label{pxbound2}
    \mathbb{E}\lVert S(\widetilde{Y}) - x \rVert_1
    \leq \sqrt{q} \sqrt{\sum_{k=1}^q \mathbb{E} \left( S^{(k)}(\widetilde{Y}) - x^{(k)} \right)^2}
    \leq 2 \sqrt{q} \sqrt{\sum_{k=1}^q \mathbb{E} \left( S^{(k)}(Y) - x^{(k)} \right)^2}
    \leq \frac{q}{\sqrt{N}}.
\end{equation}
To conclude, by combining \eqref{eq:multinomial_conc2}, \eqref{pxbound1}, and~\eqref{pxbound2} in~\eqref{steinbound2}, we have shown that
\[
    \lvert \mathbb{E}h(\widetilde{X}) - \mathbb{E}h(\widetilde{Y}) \rvert \leq \norm{L(h)}_\infty \left( \frac{4q \theta(x, \beta, q)}{1 - \theta(x, \beta, q)} + o(1) (1 + 2q\theta(x, \beta, q)) \right) \sqrt{N},
\]
which completes the proof.
\end{proof}

\begin{remark} \label{rmk:thetastar_limit}
We proved that the main result of Theorem~\ref{WDbound} holds with constant
\begin{equation}
    \theta^* \leq \frac{4q \theta(x, \beta, q)}{1 - \theta(x, \beta, q)} + o(1) (1 + 2q\theta(x, \beta, q))
\end{equation}
for any $x \in \mathfrak{S}_{\beta, q}$, where $o(1) \to 0$ as $N \to \infty$.
Note that if $\theta(x, \beta, q) \to 0$ and $N \to \infty$ simultaneously, then this implies that $\theta^* \to 0$. We will show that this indeed occurs in the low-temperature regime, as $\beta \to \infty$, and in the high-temperature regime, as $\beta \to 0$.
First, if $x = \mathbf{T}^j\check{s}_{\beta,q}$, $j \in [q]$, then recall that from Lemma~\ref{L1contract} and Lemma~\ref{expforcond}, we have
\[
    \theta(x, \beta, q) = a = 2\beta q s^*_{\beta,q} \frac{1 - s^*_{\beta,q}}{q-1} < \lambda(x, \beta, q).
\]
Using the fact that $s^*_{\beta,q} \to 1$ as $\beta \to \infty$. (Theorem~\ref{pointofconcentration}) and the expression~\eqref{eq:lambda_expr} for $\lambda(x, \beta, q)$ given in the appendix, it can be shown that $\lambda(x, \beta, q) \to 0$ as $\beta \to \infty$. Hence, we conclude that in the low-temperature regime, $\theta(x, \beta, q) \to 0$ as $\beta \to \infty$.
Similarly, in the high-temperature regime, if $x = \hat{e}$, then $\theta(x, \beta, q) = 2 \beta / q \to 0$ as $\beta \to 0$.
\end{remark}

\subsection{Approximation result in high-temperature regime} \label{pfWDBhigh}

We conclude by discussing the modifications of the proof in the previous section needed to prove our main result when $\beta < \beta_s$ and there is a unique equilibrium macrostate $x = \hat{e}$ in $\mathfrak{S}_{\beta, q}$.

\begin{proof}[Proof of Theorem~\ref{WDbound_high}]
When $\beta < \beta_s$, Theorem~\ref{WDbound_high} can be proved by considering the usual Glauber dynamics for $\mu$ and $\nu$ directly, instead of the restricted dynamics. The proof follows the same structure as the proof of Theorem~\ref{WDbound} in Section~\ref{pfWDB}, with the following changes required:
\begin{enumerate}[label=\normalfont{(\arabic*)}]
    \item Obtaining a similar bound as Theorem~\ref{mixingtime} for the mixing time of the Glauber dynamics for the Curie--Weiss--Potts model.
    \item Showing that the proportions chain of the Glauber dynamics concentrates around $\hat{e}$ in a ball of constant order (or smaller) for a sufficiently long period, analogous to Part~\ref{GlauberConcen_part1} of Lemma~\ref{GlauberConcen}.
    \item Proving that the Glauber dynamics is contracting as long as it is close enough to $\hat{e}$, as described by~\eqref{glaubercontract} in Lemma~\ref{GDcontrcting}. 
\end{enumerate}
The first two items have already been proved in~\cite{cuff2012}: \cite[Theorem~1]{cuff2012} shows that the mixing time of the Glauber dynamics is of order $O(N \log(N))$, and~\cite[Proposition~3.3, Part~(1)]{cuff2012} is a direct analogue of Part~\ref{GlauberConcen_part1} of Lemma~\ref{GlauberConcen}. Finally, since the two chains in the proof of Lemma~\ref{GDcontrcting} are separated from the boundary in the analysis, it is apparent that~\eqref{glaubercontract} also holds for the same coupling of the Glauber dynamics without the rejection step.
\end{proof}

In the temperature regime $\beta_s \leq \beta < \beta_c$, where there is still a unique equilibrium macrostate $x = \hat{e}$ in $\mathfrak{S}_{\beta,q}$, we may want to approximate the unconditional Curie--Weiss--Potts model by a sequence of i.i.d.\ uniform spins.
In this setting, the Glauber dynamics mixes rapidly after excluding a subset of initial configurations (that are far from $\hat{e}$) with a probability mass exponentially small in $N$~\cite[Theorem~4]{cuff2012}. However, since the (worst-case) mixing time is exponentially large in $N$~\cite[Theorem~3]{cuff2012}, the proof of Theorem~\ref{WDbound_high} described above does not work (in particular, the approach to control the tail sum in~\eqref{eq:split_sum} fails).
Since the Curie--Weiss--Potts model still concentrates in $\widetilde{\Omega}(\hat{e}, r)$, we are able to prove the following weaker result by using Theorem~\ref{WDbound} and the definition of conditional expectation:

\begin{proposition} \label{WDbound_med}
Suppose that $\beta_s \leq \beta < \beta_c$. Let $X \in \Omega$ be distributed according to the Curie--Weiss--Potts model and $Y \in \Omega$ be a random configuration with i.i.d.\ uniform spins. Then there exist constants $\theta^* > 0$ and $c, C \geq 0$ such that for any function $h: \Omega \to \reals$,
\begin{equation*}
    \left\lvert \mathbb{E} h(X) - \mathbb{E} h(Y) \right\rvert \leq \norm{L(h)}_\infty \theta^* \sqrt{N} + C \norm{h}_\infty e^{-cN} .
\end{equation*}
\end{proposition}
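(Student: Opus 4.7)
The plan is to reduce Proposition~\ref{WDbound_med} to Theorem~\ref{WDbound} applied with $x = \hat{e}$ (which is valid because $\beta \geq \beta_s$), combined with exponential concentration of both $\mu$ and $\nu$ on the restriction region $\widetilde{\Omega}(\hat{e}, r)$. Since $\beta < \beta_c$, the function $G_\beta$ has the unique minimiser $\hat{e}$ by Theorem~\ref{pointofconcentration}, which makes $\hat{e}$ the natural centre around which the unconditional model concentrates.

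First, I would split $\mathbb{E} h(X)$ using the law of total expectation and the definition of $\tilde{\mu}$:
\begin{equation*}
    \mathbb{E} h(X) = \mathbb{E} h(\widetilde{X}) \,\mathbb{P}(X \in \widetilde{\Omega}(\hat{e}, r)) + \mathbb{E}\left[h(X) \indicator{X \notin \widetilde{\Omega}(\hat{e}, r)}\right],
\end{equation*}
and analogously for $\mathbb{E} h(Y)$, with $\widetilde{Y}$ distributed according to $\tilde{\nu}$. Subtracting and regrouping gives
\begin{equation*}
    \mathbb{E} h(X) - \mathbb{E} h(Y)
    = \mathbb{P}(X \in \widetilde{\Omega}(\hat{e}, r)) \left[\mathbb{E} h(\widetilde{X}) - \mathbb{E} h(\widetilde{Y})\right]
    + \mathbb{E} h(\widetilde{Y}) \left[\mathbb{P}(X \in \widetilde{\Omega}(\hat{e}, r)) - \mathbb{P}(Y \in \widetilde{\Omega}(\hat{e}, r))\right] + R,
\end{equation*}
where the remainder $R := \mathbb{E}[h(X) \indicator{X \notin \widetilde{\Omega}(\hat{e}, r)}] - \mathbb{E}[h(Y) \indicator{Y \notin \widetilde{\Omega}(\hat{e}, r)}]$ is bounded in absolute value by $\|h\|_\infty (\mathbb{P}(X \notin \widetilde{\Omega}(\hat{e}, r)) + \mathbb{P}(Y \notin \widetilde{\Omega}(\hat{e}, r)))$.

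Next, I would apply Theorem~\ref{WDbound} with $x = \hat{e}$, which is legitimate because $\beta \geq \beta_s$, to bound $|\mathbb{E} h(\widetilde{X}) - \mathbb{E} h(\widetilde{Y})| \leq \|L(h)\|_\infty \theta^* \sqrt{N}$; this yields the first term of the claimed bound. The remaining two terms are both of the form $\|h\|_\infty \cdot (\text{probability of leaving } \widetilde{\Omega}(\hat{e}, r))$, so it suffices to show that each of these probabilities decays exponentially in $N$. For $Y$, exponential concentration around $\mathbb{E}[S(Y)] = \hat{e}$ follows from the multinomial concentration inequality (Bretagnolle-Huber-Carol) already invoked in the proof of Theorem~\ref{WDbound} (equation~\eqref{eq:multinomial_conc1}). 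For $X$, the standard large deviation principle for the empirical vector of the Curie-Weiss-Potts model, with rate function $G_\beta - \min G_\beta$, combined with the fact that $\hat{e}$ is the unique minimiser of $G_\beta$ when $\beta < \beta_c$, gives $\mathbb{P}(X \notin \widetilde{\Omega}(\hat{e}, r)) \leq O(e^{-cN})$ for some $c = c(\beta, q, r) > 0$. Putting these pieces together with $|\mathbb{E} h(\widetilde{Y})| \leq \|h\|_\infty$ and absorbing constants into $C$ and $c$ completes the proof.

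I do not anticipate a substantive obstacle; the main point is that one is simply trading rapid mixing of the unconditional chain (which fails in this regime) for the unconditional concentration of $\mu$ around $\hat{e}$ (which still holds, because $\beta < \beta_c$ keeps $\hat{e}$ the unique global minimiser of $G_\beta$). The only item worth checking carefully is that the exponential rate for the concentration of the Curie-Weiss-Potts model holds uniformly for $\beta$ in the given range, which follows directly from the continuity and strict positivity of $G_\beta(s) - G_\beta(\hat{e})$ on the compact set $\{s \in \mathcal{S} : \|s - \hat{e}\|_2 \geq r\}$ together with the standard Gibbs representation of $\mu$ in terms of $G_\beta$.
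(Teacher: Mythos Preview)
Your proposal is correct and follows essentially the same route as the paper: both reduce to Theorem~\ref{WDbound} at $x=\hat{e}$ together with exponential concentration of $\mu$ (via the large deviation principle for the Curie--Weiss--Potts model, valid since $\hat{e}$ is the unique minimiser of $G_\beta$ for $\beta<\beta_c$) and of $\nu$ (via multinomial concentration). The only difference is cosmetic: the paper uses the algebraic identity $\E h(X)=\E h(\widetilde{X})+\mathbb{P}(X\notin\widetilde{\Omega})\bigl(\E[h(X)\mid X\notin\widetilde{\Omega}]-\E h(\widetilde{X})\bigr)$ to arrive directly at the two-term bound, whereas you split into three terms; both decompositions are equivalent and yield the same estimate.
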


\begin{proof}
Let $\widetilde{X}$ and $\widetilde{Y}$ be the conditional random configurations in $\widetilde{\Omega}(\hat{e}, r)$ as defined in Theorem~\ref{WDbound} with $r$ chosen to be sufficiently small. Note that $\widetilde{X} \stackrel{\mathrm{d}}{=} X \mid X \in \widetilde{\Omega}(\hat{e}, r)$ and $\widetilde{Y} \stackrel{\mathrm{d}}{=} Y \mid Y \in \widetilde{\Omega}(\hat{e}, r)$. Observe that we can write $\E h(X)$ as
\[
    \mathbb{E}\left[ h(X) \mid X \in \widetilde{\Omega}(\hat{e}, r) \right] + \Prob{X \notin \widetilde{\Omega}(\hat{e}, r)} \left( \mathbb{E}\left[ h(X) \mid X \notin \widetilde{\Omega}(\hat{e}, r) \right] - \mathbb{E}\left[ h(X) \mid X \in \widetilde{\Omega}(\hat{e}, r) \right] \right),
\]
and we have a similar expression for $\E h(Y)$. By taking the difference between these two expressions and bounding $h$ uniformly by $\norm{h}_\infty$, we obtain
\begin{equation*}
    \left\lvert \mathbb{E} h(X) - \mathbb{E} h(Y) \right\rvert 
    \leq \left\lvert \mathbb{E} h(\widetilde{X}) - \mathbb{E} h(\widetilde{Y}) \right\rvert + 2 \norm{h}_\infty \left( \mathbb{P}\left( X \notin \widetilde{\Omega}(\hat{e}, r) \right) + \mathbb{P}\left( Y \notin \widetilde{\Omega}(\hat{e}, r) \right) \right).
\end{equation*}
By using large deviations results for the Curie--Weiss--Potts model (see, e.g.,~\cite[Section~2.2]{cuff2012}) and the multinomial distribution~\eqref{eq:multinomial_conc1}, we deduce that $X$ and $Y$ are not in $\widetilde{\Omega}(\hat{e}, r)$ with exponentially small probability. Applying Theorem~\ref{WDbound} to the first term completes the proof.
\end{proof}


\begin{appendix}

\section{Deferred proofs} \label{app:deferred_proofs}

We collect the proofs of Lemmas~\ref{jacobian}, \ref{L1contract}, and~\ref{expforcond} for the Curie--Weiss--Potts model in this section. Recall from~\eqref{updatingprobvector} that the vector $g_\beta(s) \in \mathcal{S}$ has entries
\[
    g_{\beta}^{(k)}(s) = \frac{e^{2\beta s^{(k)}}}{\sum_{j=1}^{q} e^{2\beta s^{(j)}}}
    \quad\text{for}\quad
    k \in [q],\, s \in \reals^q.
\]
Furthermore, recall from~\eqref{s^*} and~\eqref{T1} that
\[
    s^*_{\beta,q} = \frac{1 + (q-1)s_{\beta,q}}{q}
    \quad\text{and}\quad
    \check{s}_{\beta,q} = \left( s^*_{\beta,q}, \frac{1 - s^*_{\beta,q}}{q-1}, \ldots, \frac{1 - s^*_{\beta,q}}{q-1} \right) \in \mathcal{S},
\]
where $s_{\beta,q}$ is the largest solution of the equation~\eqref{pointofconcentration_eq1} in Theorem~\ref{pointofconcentration}. Finally, recall the definitions of $a, a', b > 0$ from Lemma~\ref{jacobian}:
\begin{equation*}
    a = 2\beta q s^*_{\beta,q} \frac{1 - s^*_{\beta,q}}{q-1}, \quad
    a' = 2\beta \frac{1 - s^*_{\beta,q}}{q-1}, \quad
    b = 2\beta \frac{1 - s^*_{\beta,q}}{q - 1} \left( s^*_{\beta,q} - \frac{1 - s^*_{\beta,q}}{q - 1}\right).
\end{equation*}
Observe that these constants are related by the identity
\begin{equation} \label{aa'b_identity}
    a - a' = 2\beta (q s^*_{\beta,q} - 1) \frac{1 - s^*_{\beta,q}}{q-1} = (q-1)b.
\end{equation}

\subsection{Proof of Lemma~\ref{jacobian}} \label{gradient_proof}

Let $s_1, s_2 \in \mathcal{S}$. Since $s_1, s_2$ are both in the probability simplex $\mathcal{S}$, we have $\sum_{k=1}^q (s_1^{(k)} - s_2^{(k)}) = 0$. It can also be verified that
\begin{align*}
    \frac{\partial}{\partial s^{(j)}}g^{(k)}_{\beta}(s)=
    \begin{cases}
        -2\beta g^{(j)}_{\beta}g^{(k)}_{\beta}, \quad &k \neq j,\\
        -2\beta(g^{(k)}_{\beta})^2+2\beta g^{(k)}_{\beta}, \quad &k = j.
    \end{cases}
\end{align*}
First, we consider the case $x = \hat{e}$. Since $g_\beta(\hat{e}) = \hat{e}$ from Lemma~\ref{g(x)=x}, for all $k = 1, \dots, q$, we have
\begin{align*}
    \nabla g^{(k)}_{\beta}(\hat{e})(s_1-s_2)^{\top}
    &=\left(-\frac{2\beta}{q^2}+\frac{2\beta}{q}\right)(s_1^{(k)}-s_2^{(k)})+\sum_{j\neq k} \left(-\frac{2\beta}{q^2}\right)(s_1^{(j)}-s_2^{(j)})\\
    &=\frac{2\beta}{q}(s_1^{(k)}-s_2^{(k)})+\left(-\frac{2\beta}{q^2}\right)\sum_{j=1}^q (s_1^{(j)}-s_2^{(j)})
    =\frac{2\beta}{q}(s_1^{(k)}-s_2^{(k)}).
\end{align*}

Next, we will consider the case $x = \mathbf{T}^1\check{s}_{\beta,q} = \check{s}_{\beta,q}$. By symmetry, the proof for the other $\mathbf{T}^j\check{s}_{\beta,q}$, $j = 2, \dots, q$, is identical. Again, by Lemma~\ref{g(x)=x}, we have $g_{\beta}(\check{s}_{\beta,q}) = \check{s}_{\beta,q}$. Since the first coordinate of $\check{s}_{\beta,q}$ differs from the rest, we will consider the first coordinate separately:
\begin{align*}
    \nabla g^{(1)}_{\beta}(\check{s}_{\beta,q})(s_1-s_2)^{\top}
    &=\left(-2\beta (s^*_{\beta,q})^2+2\beta s^*_{\beta,q}\right)(s_1^{(1)}-s_2^{(1)})+\sum_{k=2}^q \left(-2\beta s^*_{\beta,q} \frac{1-s^*_{\beta,q}}{q-1}\right)(s_1^{(k)}-s_2^{(k)})\\
    &=2\beta s^*_{\beta,q}\left(1-s^*_{\beta,q}+\frac{1-s^*_{\beta,q}}{q-1}\right)(s_1^{(1)}-s_2^{(1)})-\left(2\beta s^*_{\beta,q} \frac{1-s^*_{\beta,q}}{q-1}\right)\sum_{k=1}^q (s_1^{(k)}-s_2^{(k)})\\
    &=2\beta s^*_{\beta,q}\left(1-s^*_{\beta,q}+\frac{1-s^*_{\beta,q}}{q-1}\right)(s_1^{(1)}-s_2^{(1)}).
\end{align*}
For the remaining coordinates $k = 2, \dots, q$, it suffices to consider $k = 2$ by symmetry. We have
\begin{align*}
    \nabla g^{(2)}_{\beta}(\check{s}_{\beta,q})(s_1-s_2)^{\top}
    &=\left(-2\beta s^*_{\beta,q} \frac{1-s^*_{\beta,q}}{q-1}\right)(s_1^{(1)}-s_2^{(1)})+\left(-2\beta \left(\frac{1-s^*_{\beta,q}}{q-1}\right)^2+2\beta \frac{1-s^*_{\beta,q}}{q-1}\right)(s_1^{(2)}-s_2^{(2)})\\
    &\qquad\qquad\qquad\qquad\qquad\qquad\qquad\qquad+\sum_{k=3}^q \left(-2\beta \left(\frac{1-s^*_{\beta,q}}{q-1}\right)^2\right)(s_1^{(k)}-s_2^{(k)})\\
    &=2\beta \frac{1-s^*_{\beta,q}}{q-1}(s_1^{(2)}-s_2^{(2)})-2\beta\frac{1-s^*_{\beta,q}}{q-1}\left(s^*_{\beta,q}(s_1^{(1)}-s_2^{(1)})+\sum_{k=2}^q \frac{1-s^*_{\beta,q}}{q-1}(s_1^{(k)}-s_2^{(k)})\right)\\
    &=2\beta \frac{1-s^*_{\beta,q}}{q-1}\left((s_1^{(2)}-s_2^{(2)})+\left(-s^*_{\beta,q}+\frac{1-s^*_{\beta,q}}{q-1}\right)(s_1^{(1)}-s_2^{(1)})\right).
\end{align*}
Writing the above displayed expressions in matrix form completes the proof. \hfill$\square$

\subsection{Proof of Lemma~\ref{L1contract}} \label{L1contract_proof}

Let $x \in \mathfrak{S}_{\beta,q}$. Observe that $g_\beta$ is a smooth function (in particular, it has continuous and bounded second partial derivatives on the compact probability simplex $\mathcal{S}$). We claim that for any $s_1, s_2 \in \mathcal{S}$,
\begin{equation} \label{LipCon}
    \norm{\mathbf{J}(x) (s_1 - s_2)^{\top}}_1 \leq \theta(x, \beta, q) \norm{s_1 - s_2}_1,
\end{equation}
where $\mathbf{J}(x)$ denotes the Jacobian matrix of $g_{\beta}$ at $x$ (see Lemma~\ref{jacobian}).
Assuming that~\eqref{LipCon} holds, then by using a Taylor series expansion of $g_\beta^{(k)}$ around $s_1$ for any $k \in [q]$, we obtain
\[
    g^{(k)}_{\beta}(s_1) - g^{(k)}_{\beta}(s_2) = \nabla g^{(k)}_\beta(s_1) (s_1 - s_2)^{\top} + O(\lVert s_1 - s_2 \rVert_1^2).
\]
By assumption, $\norm{s_1 - s_2}_1 \leq \norm{s_1 - x}_2 + \norm{s_2 - x}_2 \leq 2r$. By summing up coordinates and writing $\mathbf{J}(s_1)$ to denote the Jacobian matrix of $g_\beta$ at $s_1$, this implies that
\[
    \norm{g_{\beta}(s_1) - g_{\beta}(s_2))}_1 \leq \norm{\mathbf{J}(s_1) (s_1 - s_2)^{\top}}_1 + O(r) \norm{s_1 - s_2}_1.
\]
Furthermore, by the triangle inequality, we have
\[
    \norm{\mathbf{J}(s_1) (s_1 - s_2)^{\top}}_1 \leq \norm{\mathbf{J}(x) (s_1 - s_2)^{\top}}_1 + \norm{(\mathbf{J}(s_1) - \mathbf{J}(x)) (s_1 - s_2)^{\top}}_1.
\]
By~\eqref{LipCon}, the first term is bounded by $\theta(x, \beta, q) \norm{s_1 - s_2}_1$. Due to the smoothness of $g_{\beta}$ (i.e.\ using its higher derivatives, which are bounded on the compact probability simplex), the second term can be bounded by $O(r) \norm{s_1 - s_2}_1$. This implies that the desired claim holds:
\[
    \norm{g_\beta(s_1) - g_\beta(s_2)}_1 \leq (\theta(x, \beta, q) + O(r)) \norm{s_1 - s_2}_1.
\]

We will now prove that~\eqref{LipCon} holds. Let $\mathbf{A}(x)$ denote the matrix related to the Jacobian of $g_\beta$ at $x$ from Lemma~\ref{jacobian} such that $\mathbf{J}(x)(s_1 - s_2)^{\top} = \mathbf{A}(x) (s_1 - s_2)^{\top}$ for any $s_1, s_2 \in \mathcal{S}$. If $x = \hat{e}$, then we simply have
\begin{equation*}
    \lVert \mathbf{J}(\hat{e}) (s_1 - s_2)^{\top} \rVert_1
    = \frac{2 \beta}{q} \lVert s_1 - s_2 \rVert_1
    = \theta(\hat{e}, \beta, q) \lVert s_1 - s_2 \rVert_1.
\end{equation*}
Next, we consider $x = \check{s}_{\beta,q}\equiv\mathbf{T}^1\check{s}_{\beta,q}$. By symmetry, the proof is identical for the other $\mathbf{T}^j\check{s}_{\beta,q}$, $j = 2, \dots, q$. Since $s_1, s_2 \in \mathcal{S}$, $(s_1^{(1)} - s_2^{(1)}) = -\sum_{k=2}^q (s_1^{(k)} - s_2^{(k)})$, and so by the triangle inequality,
\begin{equation} \label{domcolbdd}
    \lvert s_1^{(1)} - s_2^{(1)} \rvert \leq \sum_{k=2}^q \lvert s_1^{(k)} - s_2^{(k)} \rvert.
\end{equation}
By putting in the form of $\mathbf{A}(\check{s}_{\beta,q})$, and then using the triangle inequality and~\eqref{domcolbdd}, we obtain
\begin{align}
    \lVert\mathbf{J}(\check{s}_{\beta,q})(s_1 - s_2)^{\top} \rVert_1
    &= a \lvert s_1^{(1)} - s_2^{(1)} \rvert + \sum_{k=2}^q \lvert a'(s_1^{(k)} - s_2^{(k)}) - b(s_1^{(1)} - s_2^{(1)}) \rvert \nonumber\\
    &\leq a\lvert s_1^{(1)} - s_2^{(1)} \rvert + \sum_{k=2}^q (a' + (q - 1) b) \lvert s_1^{(k)} - s_2^{(k)} \rvert. \label{asymjacob}
\end{align}
Since we have the identity $a' + (q-1)b = a$ from~\eqref{aa'b_identity}, this shows that~\eqref{LipCon} holds:
\begin{equation*}
    \lVert \mathbf{J}(\check{s}_{\beta,q})(s_1 - s_2)^{\top} \rVert_1 \leq a \lVert s_1 - s_2 \rVert_1 = \theta(\check{s}_{\beta,q}, \beta, q) \lVert s_1 - s_2 \rVert_1.
\end{equation*}
This completes the proof. \hfill$\square$

\subsection{Proof of Lemma~\ref{expforcond}} \label{expforcond_proof}

Let $\mathbf{A}(x)$ be the matrix related to the Jacobian of $g_\beta$ at $x$ defined in Lemma~\ref{jacobian}. For Part~\ref{expforcond_part1}, when $x = \hat{e}$ and $\mathbf{A}(\hat{e}) = (2 \beta / q) \mathbf{I}$, we have $\lambda(\hat{e}, \beta, q) = 2\beta / q < 1$ for $\beta \leq \beta_c$, since $\beta_c < q/2$ from Theorem~\ref{pointofconcentration}.

For Part~\ref{expforcond_part2}, when $x = \mathbf{T}^j\check{s}_{\beta,q}$, we can use Theorem~\ref{pointofconcentration} again to deduce that $0 < b < a'$. Since $s_{\beta, q}$ is strictly increasing on $[\beta_c, \infty)$, we have $1 > s^*_{\beta,q} > (1 - s^*_{\beta,q})/(q-1) > 0$ for $\beta \geq \beta_c$, which implies that $0 < b < a'$. Furthermore, $a' < a$ follows from the identity $a = a' + (q-1)b$ from~\eqref{aa'b_identity}.
Thus, it remains to show that $a < \lambda(x, \beta, q) < 1$ for $\beta \geq \beta_c$.  By symmetry, it suffices to consider the case $x = \mathbf{T}^1\check{s}_{\beta,q} = \check{s}_{\beta,q}$. Due to the special form of $(\mathbf{A} + \mathbf{A}^{\top}) / 2$, its eigenvalues can be explicitly computed: it has a repeated eigenvalue $\lambda_i := a'$, $i = 2, \ldots, q-1$ with multiplicity $q - 2$, and its remaining two eigenvalues are given by
\begin{align*}
    &\lambda_1 := \frac{1}{2} \left( a + a' + \sqrt{(a - a')^2 + (q - 1) b^2} \right),\\
    &\lambda_q := \frac{1}{2} \left( a + a' - \sqrt{(a - a')^2 + (q - 1) b^2} \right).
\end{align*}
Recall the identity $a - a' = (q - 1)b$ from~\eqref{aa'b_identity}. Thus, we may write $\lambda_1$ and $\lambda_q$ as
\begin{equation*}
    \lambda_1 = \frac{1}{2} \left( a + a' + (a - a') \sqrt{1 + \frac{(q - 1)b^2}{(a - a')^2}}\right)
    = \frac{1}{2} \left( a + a' + (a - a') \sqrt{1 + \frac{1}{q - 1}} \right),
\end{equation*}
and 
\begin{equation*}
    \lambda_q
    = \frac{1}{2} \left( a + a' + (a - a') \sqrt{1 - \frac{(q - 1)b^2}{(a - a')^2}} \right)
    = \frac{1}{2} \left( a + a' - (a - a') \sqrt{1 + \frac{1}{q - 1}} \right).
\end{equation*}
By further simplifying, we may write
\begin{equation*}
    \lambda_1 = a + \frac{1}{2}\left( \sqrt{\frac{q}{q-1}} - 1 \right) (a - a'),
    \quad\text{and}\quad
    \lambda_q = a' - \frac{1}{2} \left( \sqrt{\frac{q}{q-1}} - 1 \right) (a - a').
\end{equation*}
Since we know that $a - a' > 0$, we deduce that $\lambda_1 > \lambda_2 = \cdots = \lambda_{q-1} > \lambda_q$ and $a < \lambda _1$. (Note that $a \leq \lambda_1$ holds for all $\beta > 0$.) By further plugging in the expressions for $a$ and $a'$, we obtain
\begin{align}
    &\lambda_1 = 2\beta \frac{1 - s^*_{\beta,q}}{q - 1} \left( (q s^*_{\beta,q} + 1) + \sqrt{\frac{q}{q-1}} (q s^*_{\beta,q} - 1) \right), \label{jacobian_lambda1_expr1} \\
    &\lambda_q = 2\beta \frac{1 - s^*_{\beta,q}}{q - 1} \left( (q s^*_{\beta,q} + 1) - \sqrt{\frac{q}{q-1}} (q s^*_{\beta,q} - 1) \right). \label{jacobian_lambdaq_expr2}
\end{align}
Since $2\beta (1 - s^*_{\beta,q}) / (q-1) > 0$ and
\begin{equation*}
    (qs^*_{\beta,q}+1)-\sqrt{\frac{q}{q-1}}(qs^*_{\beta,q}-1) \geq (q+1)-\sqrt{\frac{q}{q-1}}(q-1) > 0, \quad \text{for all} \quad q \geq 3,
\end{equation*}
it follows from~\eqref{jacobian_lambdaq_expr2} that $\lambda_q > 0$. Hence, all the eigenvalues of $(\mathbf{A} + \mathbf{A}^{\top}) / 2$ are positive, and its maximum absolute eigenvalue is given by $\lambda(x, \beta, q) = \lambda_1$. It remains to show that $\lambda_1 < 1$. First, by Lemma~\ref{g(x)=x} (or a simple substitution), $s^*_{\beta,q}$ solves the equation
\begin{equation*}
    s = g^1_{\beta}\left( \left( s, \frac{1-s}{q-1}, \ldots, \frac{1-s}{q-1} \right) \right)
    = \frac{e^{2\beta s}}{e^{2\beta s} + (q-1)e^{2\beta\frac{1-s}{q-1}}}.
\end{equation*}
By rearranging this equation, we deduce that $s^*_{\beta, q}$ satisfies
\begin{equation*}
    \exp\left\{ -2\beta \frac{qs^*_{\beta,q} - 1}{q - 1} \right\} = \frac{1 - s^*_{\beta,q}}{(q-1) s^*_{\beta,q}}.
\end{equation*}
By using this identity in~\eqref{jacobian_lambda1_expr1}, the maximum absolute eigenvalue $\lambda_1$ can be written as
\begin{equation} \label{eq:lambda_expr}
    \lambda_1 = \frac{1 - s^*_{\beta,q}}{2(qs^*_{\beta,q} - 1)} \left( (q s^*_{\beta,q} + 1) + \sqrt{\frac{q}{q-1}} (qs^*_{\beta,q} - 1) \right) \log\left( \frac{(q - 1) s^*_{\beta,q}}{1 - s^*_{\beta,q}} \right),
\end{equation}
and therefore 
\begin{equation*}
    \lambda_1 - 1 = \frac{1 - s^*_{\beta,q}}{2(qs^*_{\beta,q} - 1)} \left( (q s^*_{\beta,q} + 1) + \sqrt{\frac{q}{q-1}} (qs^*_{\beta,q} - 1) \right)f(s^*_{\beta,q}),
\end{equation*}
where $f$ is the function defined by
\begin{equation*}
    f(s) = \log\left( \frac{(q-1) s}{1 - s} \right) - \frac{2(qs - 1)}{(1 - s) \left( (qs + 1) + \sqrt{\frac{q}{q-1}}(qs - 1) \right)}.
\end{equation*}
Since $s^*_{\beta, q} \in [1 - 1/q, 1]$, to show that $\lambda_1 - 1 < 0$, it suffices to show that for all $q \geq 3$, the function $f$ is decreasing for $s \in [1 - 1/q, 1]$ and $f(1 - 1/q) < 0$. This may be verified through an exercise in calculus as follows. First, define
\begin{equation*}
    g(s) = \log\left( \frac{(q-1)s}{1-s} \right) - \frac{2(qs-1)}{(1-s) \left( (qs+1) + (5/4)(qs-1) \right)}.
\end{equation*}
Note that for all $q \geq 3$ and $s \geq 1 - 1/q$, we have $f(s) \leq g(s)$. Hence, it suffices to show that $g(1 - 1/q) < 0$ and $g'(s) < 0$ for $s \in [1 - 1/q, 1]$. It may be shown that
\[
    g(1 - 1/q) = 2 \log\left( q - 1 \right) - \frac{8 (q - 2)q}{9q - 10} < 0
\]
for all $q \geq 3$. Moreover,
\[
    g'(s) = \frac{-153 q^2 s^3 + 81 q(q+2) s^2 - (82q + 9)s + 1}{(1 - s)^2 s (1 - 9qs)^2}.
\]
Since the denominator is positive, this reduces to the task of showing that the numerator
\[
    h(s) = -153 q^2 s^3 + 81 q(q+2) s^2 - (82q + 9)s + 1
\]
is negative for all $q \geq 3$ and $s \in [1 - 1/q, 1]$. Indeed, since this is a cubic polynomial in $s$, this is much simpler to verify and we omit the remaining details.
\hfill$\square$

\end{appendix}

\section*{Acknowledgements}

The research of R.H.\ is supported by the Melbourne Research Scholarship.
We gratefully acknowledge Nathan Ross for facilitating our collaboration, many enlightening discussions, and providing valuable suggestions and feedback on our drafts. J.L.\ would also like to acknowledge the University of Melbourne, where part of this work was initially completed.
We would like to thank the anonymous referees for their helpful comments and suggestions.


\printbibliography

\end{document}